\title{Noncommutative linear systems and noncommutative elliptic curves}
\author{Daniel Chan}
\address{School of Mathematics and Statistics,
UNSW Sydney,
NSW 2052,
Australia
}
\email{danielc@unsw.edu.au}
\author{Adam Nyman}
\address{Western Washington University, Bellingham, USA
}
\email{nymana@wwu.edu}
\newcommand{\usHomc}{{\underline{{\mathcal{H}}{\it{om}}}}_{C}}
\begin{document}
\maketitle

\begin{abstract}
In this paper we introduce a noncommutative analogue of the notion of linear system, which we call a helix $\underline{\mathcal{L}} := (\mathcal{L}_{i})_{i \in \mathbb{Z}}$ in an abelian category ${\sf C}$ over a quadratic $\bZ$-indexed algebra $A$. We show that, under natural hypotheses, a helix induces a morphism of noncommutative spaces from {\sf Proj}\,$\End(\underline{\cL})$ to {\sf Proj}\,~$A$. We construct examples of helices of vector bundles on elliptic curves generalizing the elliptic helices of line bundles constructed by Bondal-Polishchuk, where $A$ is the quadratic part of $B:= \End(\underline{\cL})$. In this case, we identify $B$ as the quotient of the Koszul algebra $A$ by a normal family of regular elements of degree 3, and show that {\sf Proj}\,~$B$ is a noncommutative elliptic curve in the sense of Polishchuk \cite{polish1}. One interprets this as embedding the noncommutative elliptic curve as a cubic divisor in some noncommutative projective plane, hence generalizing some well-known results of Artin-Tate-Van den Bergh.
\end{abstract}

\section{Introduction}

In classical algebraic geometry, one often uses sections of a line bundle $\cL$ on a projective variety $X$ over a field $k$ to construct a morphism $f \colon X \to \bP^{n}_k$. More algebraically, one obtains from this setup a homogeneous coordinate ring $B = \oplus_i H^0(X, f^*\cO(i))$ and a graded algebra homomorphism $k[x_0, \ldots, x_n] \to B$ which gives a Stein factorisation of $f$. Artin-Tate-Van den Bergh \cite{atv1} famously constructed a noncommutative example of this, ``embedding'' a genus one curve into a noncommutative projective plane.  They then used this embedding to study the noncommutative projective plane. We continue this line of study in this manuscript.

More precisely, in \cite{atv1}, the notion of line bundle is replaced by that of invertible bimodule, which are used to construct twisted homogeneous coordinate rings.  On the other hand, the notion of polynomial ring is replaced by that of  Artin-Schelter regular algebra. In \cite{bp}, a more flexible approach was adopted, where instead of taking powers of a single invertible bimodule, one considered a whole sequence $\underline{\cL} = (\cL_i)_{i \in \bZ}$ of line bundles on a genus one curve as analogues of $f^*\cO(i)$. The homogeneous coordinate ring is now replaced by the endomorphism algebra of $\underline{\cL}$, which is now a $\bZ$-indexed algebra as opposed to a $\bZ$-graded one. Their analogue of the polynomial ring is now a type of Koszul $\mathbb{Z}$-indexed algebra.

We further develop the approach in \cite{bp} as follows. We first extend the notion of Koszul algebra to the bimodule algebra setting and introduce the notion of Koszul complex in an additive category. As with the classical Koszul algebra, indexed bimodule Koszul algebras have exact Koszul complexes which are analogues of the Euler exact sequences for projective spaces. Our point of view is that if $f$ is some morphism of noncommutative spaces (whatever that might mean), then $f^*$ should preserve exact sequences of vector bundles. Our central notion, that of a helix, will be an analogue of the pullbacks of the Euler exact sequences on projective space. Given an abelian category {\sf C} and a quadratic $\bZ$-indexed algebra $A$, a {\it complete helix in {\sf C} of length n over $A$} (defined precisely in Definition~\ref{def:helix}), is a sequence $\underline{\cL} = (\cL_{i})_{i \in \bZ}$ of objects in {\sf C} enriched by a collection of exact sequences of the form
$$
0 \to \cL_{-j-n}^{\oplus m_{n,j}} \to \ldots \to \cL_{-j-2}^{\oplus m_{2,j}} \xto{\phi_2}
\cL_{-j-1}^{\oplus m_{1,j}} \xto{\phi_1}
\cL_{-j}^{\oplus m_{0,j}} \to 0
$$
for each $j \in \bZ$ such that the end terms involving $\phi_1,\phi_2$ resemble the Koszul complex over $A$. If $\cL$ is any sequence of objects, then the standard choice for $A$ is the quadratic part $\bS^{nc}(\underline{\cL}) := T(\underline{V})/\langle I_2\rangle$ of $\End(\underline{\cL})$ defined as the quotient of the tensor algebra on the degree one part $\underline{V}$ of $\End(\underline{\cL})$ quotiented out by the space of degree two relations $I_2$ of $\End(\underline{\cL})$. Before stating the following result, which gives our noncommutative version of a Stein factorization map and is a combination of Theorems~\ref{theorem.morph} and \ref{thm:adjoint}, we remind the reader that if $C$ is a $\mathbb{Z}$-indexed algebra, ${\sf Proj }C$ denotes the quotient of the category of graded right $C$-modules by a suitably defined torsion subcategory, generalizing the category of quasicoherent sheaves over a projective variety (see Section \ref{sec.geom} for a precise definition):
\begin{theorem}  \label{thm:mainStein}
Let $A$ be a quadratic algebra and let $\underline{\cL}$ be a complete length $n$ helix over $A$. Suppose, furthermore, that $B := \End(\underline{\cL})_{\geq 0}$ is locally finite and that there exists $m \geq 0$ such that for all $l \geq m$ and  $j>0$ we have $\Ext^j(\cL_i,\cL_{i+1}) = 0$. Then the restriction functor ${\sf Gr}\,B \to {\sf Gr}\, A$ induces a functor ${\sf Proj}\, B \to {\sf Proj}\, A$ which has a left adjoint. \end{theorem}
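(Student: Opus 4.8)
The plan is to descend the restriction / extension-of-scalars adjunction along the structure homomorphism of the helix. A complete length $n$ helix over $A$ comes with a map from the degree-one part of $A$ to the degree-one part of $B=\End(\underline{\cL})_{\ge0}$, and the requirement that the maps $\phi_1,\phi_2$ in each exact sequence of the helix reproduce the beginning of the Koszul complex of $A$ forces the space $I_2$ of quadratic relations of $A$ into the relations of $B$; this yields a homomorphism of $\bZ$-indexed algebras $\phi\colon A\to B$ (in the standard case $A=\bS^{nc}(\underline{\cL})$ it is tautological). Restriction along $\phi$ is an exact functor $\rho_*\colon{\sf Gr}\,B\to{\sf Gr}\,A$ with left adjoint $-\otimes_AB$ and right adjoint $\usHom(B,-)$, so it preserves all limits and colimits. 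As $\phi$ is graded, $\rho_*$ leaves the underlying graded object unchanged and hence preserves boundedness; since ${\sf Tors}$ is the localizing subcategory generated by the bounded graded modules --- here is where local finiteness of $B$ is used --- we get $\rho_*({\sf Tors}\,B)\subseteq{\sf Tors}\,A$. Thus $\pi_A\circ\rho_*$ annihilates ${\sf Tors}\,B$ and, by the universal property of the Serre quotient, factors through a functor $F\colon{\sf Proj}\,B\to{\sf Proj}\,A$ with $F\pi_B\cong\pi_A\rho_*$; this is the content of Theorem~\ref{theorem.morph}.

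For the left adjoint (Theorem~\ref{thm:adjoint}), let $\omega_A,\omega_B$ be the section functors right adjoint to $\pi_A,\pi_B$, and take as candidate $F^{*}:=\pi_B\circ(-\otimes_AB)\circ\omega_A$. Chasing the three adjunctions $\pi_B\dashv\omega_B$, $(-\otimes_AB)\dashv\rho_*$ and $\pi_A\dashv\omega_A$ shows that the required natural isomorphism $\Hom_{{\sf Proj}\,B}(F^{*}\mathcal{M},\mathcal{N})\cong\Hom_{{\sf Proj}\,A}(\mathcal{M},F\mathcal{N})$ is equivalent to the identity $\rho_*\omega_B\cong\omega_AF$ of functors ${\sf Proj}\,B\to{\sf Gr}\,A$ --- that is, to the statement that $\rho_*$ sends closed (saturated) $B$-modules to closed $A$-modules, equivalently that for every $\mathcal{N}$ the $A$-saturation map $\rho_*\omega_B\mathcal{N}\to\omega_A\pi_A\rho_*\omega_B\mathcal{N}$ is an isomorphism. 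A clean sufficient condition is that both $-\otimes_AB$ and $\operatorname{Tor}_1^A(-,B)$ carry ${\sf Tors}\,A$ into ${\sf Tors}\,B$: for a closed $N=\omega_B\mathcal{N}$ and a torsion $T$, tensor-hom adjunction gives $\Hom_A(T,\rho_*N)\cong\Hom_B(T\otimes_AB,N)=0$, and a diagram chase along a projective presentation $0\to T'\to P\to T\to0$ in ${\sf Gr}\,A$, using that $\Ext^1_B$ of a torsion $B$-module into the closed $N$ vanishes, gives $\Ext^1_A(T,\rho_*N)=0$ as well --- precisely closedness of $\rho_*N$. (Equivalently, the same input makes $F$ preserve products, hence all limits, and one could instead invoke the adjoint functor theorem.)

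Everything thus comes down to a single homological assertion, which I expect to be the crux and the essential use of the hypotheses: that $B$ is finite over the image of $A$, in the sense that $-\otimes_AB$ and $\operatorname{Tor}_1^A(-,B)$ preserve the bounded graded modules. I would establish this by applying $\Hom(\cL_p,-)$, with $p$ taken far to the left, to the complete helix's Koszul complexes $0\to\cL_{-j-n}^{\oplus m_{n,j}}\to\cdots\to\cL_{-j}^{\oplus m_{0,j}}\to0$: the given vanishing of the higher $\Ext$ groups between the $\cL_i$ makes the resulting complexes of graded $B$-modules exact, and comparing them through $\phi_1,\phi_2$ with the Koszul complex of $A$ forces $\phi$ to be surjective in all sufficiently high degrees --- a noncommutative incarnation of the Euler sequence controlling the homogeneous coordinate ring of the image (in the main example $B$ is literally the quotient of $A$ by a normal regular sequence of degree $3$). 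Once $B$ agrees with a quotient of $A$ in high degrees, the required boundedness of $-\otimes_AB$ and $\operatorname{Tor}_1^A(-,B)$ on bounded modules follows. With this in hand $\rho_*$ preserves closedness, $F^{*}=\pi_B\circ(-\otimes_AB)\circ\omega_A$ is cocontinuous --- the torsion discrepancy between $\omega_A(\varinjlim)$ and $\varinjlim\omega_A$ being annihilated by $-\otimes_AB$ followed by $\pi_B$ --- and $F^{*}\dashv F$, completing the proof.
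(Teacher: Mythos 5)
Your overall architecture is the same as the paper's: restriction descends to ${\sf Proj}$ because it preserves torsion, the candidate left adjoint is induced by $-\underline{\otimes}_A B$, and everything reduces to the criterion that both $-\underline{\otimes}_A B$ and $\operatorname{Tor}_1^A(-,B)$ carry torsion $A$-modules to torsion $B$-modules (the paper gets this reduction by citing \cite[Lemma 1.1]{paulmorph} rather than via your closed-module reformulation, which is a cosmetic difference). The genuine gap is in your verification of that criterion. Your pivotal claim --- that applying $\Hom(\cL_p,-)$ to the helix sequences and comparing with the Koszul complex of $A$ ``forces $\phi$ to be surjective in all sufficiently high degrees'' --- does not follow and is false in the generality of the theorem. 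What the Ext-vanishing actually gives (Proposition~\ref{prop.fg}) is exactness of the start of the Koszul complex of the $A$-module $B$, i.e.\ surjectivity of $\mu\colon A_{i,i+1}\otimes B_{i+1,i+v}\to B_{i,i+v}$ for $v\geq m+n$. Iterating this only shows that each column $Be_j$ is a finitely generated $A$-module generated in degrees $>j-m-n$; it does not show $A_{ij}\to B_{ij}$ is onto, because completeness controls degree one only, the pieces $B_{i,i+l}$ with $2\leq l\leq m+n-1$ need not lie in the image of $A$, and $A$-multiples of such elements need not either. Surjectivity of $A\to B$ is a special feature of the elliptic examples (Theorem~\ref{thm:BPBisKoszul}(2), via Bondal--Polishchuk), not a consequence of the helix axioms, so your ``$B$ agrees with a quotient of $A$ in high degrees'' cannot be the basis of the general proof.

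Moreover, even after replacing that claim by the correct statement (finite generation of $Be_j$ in bounded degrees), the $\operatorname{Tor}_1$ half does not simply ``follow'': one must bound the degrees in which the first syzygy of $Be_j$ (in the paper, of $AB_{j-m-n+1,j}$) is generated over $A$. This is the actual crux, and it uses exactness of the helix complex at the middle term, namely that $\ker\bigl(A_{s,s+1}\otimes B_{s+1,j}\to B_{s,j}\bigr)$ is the image of the quadratic relations $I_{s,s+2}\otimes B_{s+2,j}$ (Lemma~\ref{lem:syzygyGen}); your proposal never engages with this step. A secondary inaccuracy: the existence of ${\sf Proj}\,B$ and of the section functor $\omega_B$ (needed even to write down your $F^*$) requires ${\sf Tors}\,B$ to be localizing, and this is not a consequence of local finiteness of $B$ alone but of finite generation of $(e_iB)_{>i}$, which again comes from the same Koszul-complex surjectivity supplied by the helix and the Ext-vanishing hypothesis.
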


Various related notions of helices occur in the literature and inspired our definition. In particular, we will use a variant of Bondal-Polishchuk's notion of an elliptic helix $\underline{\cL}$ of period 3 in which the helix is composed of objects in a $k$-linear abelian category {\sf C}. The full definition is given in Definition~\ref{def:ellipticHelix}, but the key property here is that for all $i \in \bZ$, if one left mutates $\cL_i$ twice, first past $\cL_{i-1}$ and then that past $\cL_{i-2}$, one obtains $\cL_{i-3}$ up to isomorphism (see Definition \ref{def:mutable} for our notion of {\it mutation}). Each mutation corresponds to an exact sequence and splicing these together gives a complete helix of length 3 over $\bS^{nc}(\underline{\cL})$ as we show in Proposition~\ref{prop:canonicalHelixOfElliptic}.

The elliptic helices $\underline{\cL} = (\cL_i)$ of period 3 constructed by Bondal-Polishchuk involve line bundles $\cL_i$ on a smooth elliptic curve and $\End(\underline{\cL})$ is the analogue of Artin-Tate-Van den Bergh's twisted homogeneous coordinate ring which appears as the quotient of an Artin-Schelter regular algebra $A$ of dimension three, by the ideal generated by a normal element of degree three. We have the following analogue of this result. See Theorem~\ref{thm:BPBisKoszul}, Proposition~\ref{prop:HilbertSeries} and Theorem~\ref{thm:kernelGenByNormal} for precise statements and further details.

\begin{theorem}  \label{thm:mainSncEndL}
Let $\underline{\cL}$ be an elliptic helix of period 3 in $\Coh(X)$ where $X$ is a smooth elliptic curve over an algebraically closed field $k$ of characteristic zero. Let $A := \bS^{nc}(\underline{\cL})$ and suppose $B:= \End(\underline{\cL})$ is {\em equigenerated} in the sense that $\dim_k B_{01} = \dim_k B_{12} = \dim_k B_{23} = d$. Then
\begin{enumerate}
    \item $A$ is Koszul, 3-periodic, and is AS-regular of dimension three.
    \item the canonical map $A \to B$ is surjective and induces adjoint functors on {\sf Proj} as in Theorem~\ref{thm:mainStein}.
    \item the Hilbert series of $A$ and $B$ are
    $$
    H_{A,}(t) = \frac{1}{1 - dt + dt^2 - t^3}, \quad H_{B,}(t) = \frac{1-t^3}{1 - dt + dt^2 - t^3}
    $$
    \item the kernel of $A \to B$ is generated by a normal family $\underline{g}$ of regular elements of degree three.
\end{enumerate}
\end{theorem}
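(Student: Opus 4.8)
The plan is to bootstrap everything from the complete length-three helix over $A := \bS^{nc}(\underline{\cL})$ provided by Proposition~\ref{prop:canonicalHelixOfElliptic}, together with Serre duality on $X$. Splicing the mutation sequences gives, for each $j$, an exact sequence in $\Coh(X)$
\[ 0 \to \cL_{-j-3}^{\oplus m_{3,j}} \to \cL_{-j-2}^{\oplus m_{2,j}} \to \cL_{-j-1}^{\oplus m_{1,j}} \to \cL_{-j} \to 0 \]
whose two right-hand maps are those of the Koszul complex of $A$, so by the relation between complete helices and Koszulity developed in the earlier sections this forces $A$ to be Koszul; equigeneration gives $m_{0,j} = 1$ and $m_{1,j} = \dim_k \End(\underline{\cL})_{-j-1,-j} = d$. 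Applying $\Hom(-,\cL_{-j})$ to this resolution and using $\omega_X \cong \cO_X$ identifies its dual, up to a shift, with the resolution of $\cL_{-j-3}$ of the same shape; comparing the two pins down $m_{2,j} = d$ and $m_{3,j} = 1$, so the quadratic dual $A^!$ lives in degrees $0,1,2,3$ with $\dim_k A^!_{i,i} = \dim_k A^!_{i,i+3} = 1$ and $\dim_k A^!_{i,i+1} = \dim_k A^!_{i,i+2} = d$, and the resulting self-duality of the Koszul complex makes $A^!$ Frobenius. Hence $A$ is AS-regular of dimension three, and it is $3$-periodic because $\underline{\cL}$ has period three.

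For part (2), surjectivity of the canonical map $A \to B$ is the statement that $B = \End(\underline{\cL})$ is generated in degree one, which I would read off from the mutation triangles --- they express the maps out of each $\cL_i$ as composites of degree-one maps --- together with the vanishing of the higher cohomology of the relevant twists on $X$. To then invoke Theorem~\ref{thm:mainStein} one checks its hypotheses: local finiteness of $B_{\geq 0}$ is automatic since $\Hom$-spaces in $\Coh(X)$ are finite-dimensional, and as $\dim X = 1$ and $\omega_X \cong \cO_X$ the required vanishing $\Ext^j(\cL_i,\cL_{i+1}) = 0$ for all $j > 0$ reduces by Serre duality to $\Hom(\cL_{i+1},\cL_i) = 0$, which holds along the helix because the slopes strictly increase under the mutations; this is where the relevant part of Theorem~\ref{thm:BPBisKoszul} enters. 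The adjoint pair on ${\sf Proj}$ then follows directly from Theorem~\ref{thm:mainStein}.

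For part (3), Koszulity of $A$ yields the identity $H_A(t)\,H_{A^!}(-t) = 1$ of formal power series, and $H_{A^!}(t) = 1 + dt + dt^2 + t^3$ by part (1), whence $H_A(t) = 1/(1 - dt + dt^2 - t^3)$. For $H_B$, the Ext-vanishing of part (2) and its translates give $\dim_k B_{i,i+\ell} = \chi(\cL_i,\cL_{i+\ell})$ for $\ell \geq 1$, and Riemann-Roch on the elliptic curve expresses $\chi(\cL_i,\cL_{i+\ell})$ in terms of the ranks and degrees of the $\cL_i$, which obey a period-three recursion coming from the mutation sequences; matching this against the recursion for the coefficients of $H_A$ gives $\dim_k B_{i,i+\ell} = \dim_k A_{i,i+\ell} - \dim_k A_{i,i+\ell-3}$, that is, $H_B(t) = (1-t^3)H_A(t)$.

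Part (4) is then mostly bookkeeping plus one structural point. From parts (1)--(3) the kernel $K$ of $A \to B$ has Hilbert series $H_K(t) = t^3 H_A(t)$, so $\dim_k K_{i,i+3} = 1$; fix a generator $g_i$ of $K_{i,i+3}$ for each $i$. Since $A$, being AS-regular of dimension three, is a domain, the evident surjection onto the right ideal $g_i A$ from a suitably shifted rank-one free right $A$-module is forced by the Hilbert series count to be an isomorphism, so each $g_i$ is a non-zero-divisor, and $\sum_i H_{g_iA}(t) = t^3 H_A(t) = H_K(t)$ shows that $\underline{g} = (g_i)_i$ generates $K$. The only non-formal step is \emph{normality} of $\underline{g}$: I would prove $A g_i = g_{i-3} A$ (with the appropriate reindexing) by identifying $g_i$ with a canonical ``cubic'' element coming from the dualising/Serre-duality structure on $X$, in analogy with the normal cubic of Artin-Tate-Van den Bergh. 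The main obstacles are, first, producing the Frobenius pairing on $A^!$ in part (1) genuinely from Serre duality rather than from dimension counts alone, and second, establishing normality in part (4); I expect the latter to be the real bottleneck, since it requires pinning down the canonical nature of the degree-three elements, and it is precisely what is needed to realise ${\sf Proj}\,B$ as a cubic divisor inside the noncommutative plane ${\sf Proj}\,A$.
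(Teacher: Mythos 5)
Your overall route — extract everything from the canonical helix sequences plus Serre duality, then compute — is in the spirit of the paper, but there are two genuine gaps, both at the structural core.

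First, part~(1): you claim that exactness of the spliced mutation sequence ``forces $A$ to be Koszul'' by ``the relation between complete helices and Koszulity developed in the earlier sections.'' No such implication is established. Proposition~\ref{prop:canonicalHelixOfElliptic} shows that the helix sequences coincide with the $W$-Koszul complex of the $A$-module $\underline{\cL}$ in $\Coh(X)$; Koszulity of $A$ (Definition~\ref{defn.Koszul}) is the exactness of Koszul resolutions of the simple modules $e_jA/(e_jA)_{>j}$ in ${\sf Gr}\,A$, a statement about a different module in a different category. The paper does not derive Koszulity from the helix at all: it imports it (along with $3$-periodicity, global dimension three, AS-regularity, and the Frobenius property of $A^!$) from Bondal--Polishchuk's \cite[Theorem 7.4]{bp}, by identifying $A$ with the Koszul dual $B(S)^!$ of their index-$3$ Frobenius algebra $B(S)$ — one checks the surjection $B(S)^! \to B$ is an isomorphism in degrees one and two, hence factors as an isomorphism $B(S)^! \cong A$. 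Your proposal to produce the Frobenius pairing on $A^!$ directly from Serre duality is in fact close to how Bondal--Polishchuk argue internally, so that route is salvageable, but as written you do not have Koszulity, and everything downstream (the shape of the Koszul resolutions in Proposition~\ref{prop:KoszulResultionEquigen}, hence the Hilbert series) depends on it.

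Second, part~(4): you assert that ``$A$, being AS-regular of dimension three, is a domain'' and use this to get non-zero-divisibility of $g_i$ \emph{before} proving generation. That domain claim is a deep theorem (Artin--Tate--Van den Bergh, Stephenson) in the connected $\bZ$-graded setting, proved via the geometry of point modules; it is not established here or elsewhere for $\bZ$-indexed AS-regular algebras, and the paper does not invoke it. The paper's logic runs in the opposite order: it first proves $\ker\pi = A\underline{g} = \underline{g}A$ by induction on degree via the cohomological Lemma~\ref{lem:keyToNormal}, whose key input is that applying $\Hom(\cL_{-n},-)$ to the mutation sequence $0\to\cL_{-3}\to(\cL_{-2},L_{\cL_{-1}}\cL_0)\otimes\cL_{-2}\to L_{\cL_{-1}}\cL_0\to 0$ and using ${}^1(\cL_{-n},\cL_{-3})=0$ for $n>3$ yields surjectivity of the relevant multiplication map; the right-handed generation is obtained by dualizing the helix. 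Only \emph{after} generation on both sides is known does the Hilbert series comparison $H_{e_iK} = t^3 H_{e_{i+3}A}$ force $g_i$ to be a non-zero-divisor. Once two-sided generation is in hand, normality $g_iA_{i+3,j+3} = A_{ij}g_j$ is automatic from $K$ being a two-sided ideal and the dimension count, so the ``canonical cubic element'' machinery you propose is unnecessary — but the cohomological generation argument, which you omit, is precisely the real bottleneck you correctly anticipated.

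Parts~(2) and~(3) are essentially in line with the paper modulo these two issues: for~(2) the degree-one generation does follow from applying $\Hom(\cL_{-j},-)$ to the mutation sequences together with a slope/Serre-duality vanishing of ${}^1(\cL_{-j}, L_{\cL_{-i-1}}\cL_{-i})$, which you should make explicit; and for~(3) your Koszul-duality identity $H_A(t)H_{A^!}(-t)=1$ is a cleaner route to $H_A$ than the paper's additivity argument, while your Riemann--Roch computation of $H_B$ matches the paper's recursion via the numerical invariants $h_i = d_ir_0 - d_0r_i$.
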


Finally, we produce a new family of examples of elliptic helices of period 3, consisting now of vector bundles on a smooth elliptic curve. The following sums up Theorem~\ref{thm:egTriadGenHelix} and Theorem~\ref{thm:ncElliptic}.
\begin{theorem}   \label{thm:mainNCelliptic}
Let $X$ be a smooth elliptic curve over an algebraically closed field $k$ of characteristic zero, and let $d>3$ an odd integer. Given any two line bundles $\cL_0, \cL_1$ of degrees 0 and $d$ and a rank two vector bundle $\cL'_1$ of degree $d$, there exists a unique elliptic helix $\underline{\cL} = (\cL_i)$ of period 3, incorporating the line bundles $\cL_0,\cL_1$ above and such that $\cL_2$ is the right mutation $R_{\cL_1} \cL'_1$. In this case, $B:= \End(\underline{\cL})$ is equigenerated with $\dim_k B_{01} = d$ so that Theorem~\ref{thm:mainSncEndL} applies. Furthermore, $B$ is coherent but not noetherian and {\sf Proj}\,$B$ is a noncommutative elliptic curve in the sense of \cite{polish1}.
\end{theorem}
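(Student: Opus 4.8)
The plan is to construct the helix by propagating the mutation recursion from the three consecutive terms $\cL_0,\cL_1,\cL_2$, where $\cL_2:=R_{\cL_1}\cL'_1$, and then to deduce every remaining assertion from Theorem~\ref{thm:mainSncEndL} once equigeneration is verified. Uniqueness is formal: the period-$3$ condition of Definition~\ref{def:ellipticHelix} reads $\cL_{j-3}\cong L_{\cL_{j-2}}L_{\cL_{j-1}}\cL_j$, so a valid triad $(\cL_0,\cL_1,\cL_2)$ determines the whole helix in both directions, and $\cL_2$ is pinned down by hypothesis. The substance is \emph{existence} — that this triad really generates an elliptic helix of period $3$ — for which I would invoke the triad-to-helix construction of Theorem~\ref{thm:egTriadGenHelix} and reduce to checking its hypotheses on our triad. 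First, $\cL_2$ must genuinely lie in $\Coh(X)$: since $\omega_X\cong\cO_X$, Serre duality gives $\Ext^1(\mathcal F,\mathcal G)\cong\Hom(\mathcal G,\mathcal F)^*$ and Riemann--Roch gives $\chi(\mathcal F,\mathcal G)=\operatorname{rk}\mathcal F\cdot\deg\mathcal G-\operatorname{rk}\mathcal G\cdot\deg\mathcal F$; because $d$ is odd, Atiyah's theory makes the rank-$2$, degree-$d$ bundle $\cL'_1$ stable, and the slope comparison $\mu(\cL_1)=d>\tfrac d2=\mu(\cL'_1)$ forces $\Hom(\cL_1,\cL'_1)=0$, hence $\Ext^1(\cL'_1,\cL_1)=0$ and $\dim_k\Hom(\cL'_1,\cL_1)=d$, so $\cL_2=R_{\cL_1}\cL'_1=\operatorname{coker}\!\big(\cL'_1\to\Hom(\cL'_1,\cL_1)^*\otimes\cL_1\big)$. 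This coevaluation map is fibrewise injective since $(\cL'_1)^{\vee}\otimes\cL_1$ is semistable of slope $\tfrac d2\ge 2$ — the first use of $d>3$ — hence globally generated; so $\cL_2\in\Coh(X)$ is a vector bundle of rank $d-2$ and degree $d(d-1)$.

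The heart of the matter, and the step I expect to be the main obstacle, is to show that iterating the recursion from $(\cL_0,\cL_1,\cL_2)$ never degenerates: every term is again a \emph{stable} vector bundle, the Koszul/Euler sequences attached to the mutations take the uniform shape $0\to\cL_{i-3}\to\cL_{i-2}^{\oplus d}\to\cL_{i-1}^{\oplus d}\to\cL_i\to 0$ (the multiplicities being the $\dim_k\Hom(\cL_j,\cL_{j+1})$), and the period-$3$ identity holds. These go hand in hand. Granting stability and slope monotonicity for the terms seen so far, Riemann--Roch gives $\dim_k\Hom(\cL_j,\cL_{j+1})=\chi(\cL_j,\cL_{j+1})=r_je_{j+1}-r_{j+1}e_j$ and the recursions $r_i=d\,r_{i-1}-d\,r_{i-2}+r_{i-3}$, $e_i=d\,e_{i-1}-d\,e_{i-2}+e_{i-3}$ for $r_i=\operatorname{rk}\cL_i$, $e_i=\deg\cL_i$, with $(r_0,e_0)=(1,0)$, $(r_1,e_1)=(1,d)$, $(r_2,e_2)=(d-2,d(d-1))$. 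One then checks that the invariant $r_ie_{i+1}-r_{i+1}e_i=d$ persists, so $\gcd(r_i,e_i)\mid d$ and $d$ odd forces $\gcd(r_i,e_i)=1$, and that the slopes $e_i/r_i$ stay strictly increasing; the $\Ext^1$-vanishing between consecutive terms that this provides is exactly what makes the next mutation a short exact sequence of torsion-free sheaves with the predicted invariants, and coprimality makes the new term stable, closing the induction and verifying the hypotheses of Theorem~\ref{thm:egTriadGenHelix}. The delicate point is that stability must be preserved under every mutation in the infinite chain; I would prove this by induction, either directly via Atiyah's classification of bundles on $X$ or by transporting consecutive pairs along the $\widetilde{\operatorname{SL}_2(\bZ)}$-action on $D^b(X)$, using the invariant $r_ie_{i+1}-r_{i+1}e_i=d$ to keep each pair in the stable range.

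With the helix in hand, equigeneration is a short computation: the slopes of $\cL_0,\cL_1,\cL_2,\cL_3$ are strictly increasing, so $\Ext^1$ vanishes between consecutive terms, and $\dim_k B_{01}=\chi(\cL_0,\cL_1)=d$, $\dim_k B_{12}=\chi(\cL_1,\cL_2)=d(d-1)-(d-2)d=d$, and likewise $\dim_k B_{23}=\chi(\cL_2,\cL_3)=d$ using the rank/degree values from the recursion. Moreover $B=\End(\underline{\cL})$ is locally finite, being built from finite-dimensional $\Hom$-spaces of coherent sheaves. Thus $B$ is equigenerated with $\dim_k B_{01}=d$, and Theorem~\ref{thm:mainSncEndL} applies verbatim: $A=\bS^{nc}(\underline{\cL})$ is Koszul, $3$-periodic and AS-regular of dimension three, $A\to B$ is surjective with kernel a normal family $\underline g$ of regular elements of degree $3$ and induces adjoint functors on {\sf Proj} as in Theorem~\ref{thm:mainStein}, and $H_B(t)=(1-t^{3})/(1-dt+dt^{2}-t^{3})$.

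It remains to establish the three adjectives. For non-noetherianity, factor $1-dt+dt^{2}-t^{3}=(1-t)(1+(1-d)t+t^{2})$, so $H_B(t)=(1+t+t^{2})/(1+(1-d)t+t^{2})$; for $d>3$ the quadratic $1+(1-d)t+t^{2}$ has a real root strictly inside the unit disc, whence $B$ has exponential growth — the essential second use of $d>3$ — which is impossible for a right noetherian locally finite graded algebra. For coherence, $B=A/(\underline g)$ is the quotient of the Koszul AS-regular $\bZ$-algebra $A$ of dimension three by a normal regular sequence of degree-$3$ elements; such an $A$ is graded coherent (the exact Euler/Koszul complexes supply the requisite finite free resolutions, placing $A$ among the coherent Koszul algebras relevant to \cite{polish1}), and graded coherence is preserved under quotient by a normal non-zero-divisor, so $B$ is coherent and ${\sf Proj}\,B$ is well behaved. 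Finally, to identify ${\sf Proj}\,B$ as a noncommutative elliptic curve in the sense of \cite{polish1}, I would match the structure obtained above with Polishchuk's definition: the presentation of $B$ as such a quotient is precisely his setup; coherence together with local finiteness makes ${\sf Proj}\,B$ Hom-finite; the degree of $\underline g$ coinciding with the AS-dualizing twist of $A$ forces the Serre functor of ${\sf Proj}\,B$ to be the shift $[1]$ with trivial dualizing object and homological dimension one — the genus-one behaviour; and the helix living in $\Coh(X)$ for a genuinely smooth elliptic $X$, transmitted through the morphism ${\sf Proj}\,B\to{\sf Proj}\,A$ of Theorem~\ref{thm:mainStein}, excludes the nodal and cuspidal degenerations, so we land in the elliptic case. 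As noted, the stability-propagation step of the second paragraph is where the real work lies; the rest is Riemann--Roch/Serre-duality bookkeeping together with appeals to Theorems~\ref{thm:mainStein} and \ref{thm:mainSncEndL}.
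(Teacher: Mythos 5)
Your construction of the helix and the equigeneration computation are on the right track: you correctly identify the triad $(\cL_0,\cL'_1,\cL_1)$ as a $(0,\tfrac{d}{2},d)$-triad and reduce existence to Theorem~\ref{thm:egTriadGenHelix}, then read off $\dim_k B_{01}=\dim_k B_{12}=\dim_k B_{23}=d$ from Riemann--Roch and the rank/degree recursion, exactly as the paper does via Proposition~\ref{prop:EndLEquigen} and Lemma~\ref{lemma.constant}. (Your long second paragraph partially re-derives what Theorem~\ref{thm:egTriadGenHelix} already supplies; the paper handles stability of the mutated bundles by citing Rudakov's result, quoted in Remark~\ref{rem:triadInduction}, so that inductive worry is already discharged.) The uniqueness argument you give is the same formal observation the paper relies on.

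The final paragraph, however, is where your proposal diverges from the paper's proof and where the real gaps lie. The paper's Theorem~\ref{thm:ncElliptic} gets all three of coherence, non-noetherianity, and the identification of ${\sf Proj}\,B$ as a noncommutative elliptic curve from a \emph{single} fact: by Theorem~\ref{thm:egTriadGenHelix} the limiting slope $\theta=\lim_{n\to-\infty}\mu(\cL_n)=d\bigl(\tfrac{A-(d-1)}{A-(d-3)}\bigr)$ is irrational, and the argument of Polishchuk's Theorem 3.5 shows $\underline{\cL}$ is an ample sequence for $\sf C_\theta$. Coherence is then \cite[Proposition~2.3]{polishchuk} and non-noetherianity is \cite[Proposition~3.1]{polish1}. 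Your route replaces this with three separate claims that are not substantiated. First, your non-noetherianity argument appeals to ``exponential growth is impossible for a right noetherian locally finite graded algebra''; that is a Stephenson--Zhang-type theorem for connected $\bZ$-graded $k$-algebras, and you would have to prove the analogue for $\bZ$-indexed algebras before invoking it. Second, your coherence argument asserts that ``graded coherence is preserved under quotient by a normal non-zero-divisor''; this is not a known general fact (coherence is notoriously unstable under quotients, even commutatively), and you offer no proof, nor a proof that $A$ itself is coherent. Third, the identification of ${\sf Proj}\,B$ as one of Polishchuk's noncommutative elliptic curves is not the same as matching Serre-functor data; Polishchuk's objects are the specific hereditary categories $\sf C_\theta$ built from the irrational cut $\theta$, and what must be shown is that $\underline{\cL}$ is an ample sequence for $\sf C_\theta$. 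Without the irrational-slope computation from Theorem~\ref{thm:egTriadGenHelix} feeding into Polishchuk's machinery, none of these three conclusions is secured.
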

More generally, we give in Theorem~\ref{thm:numericalCriterionGenHelix} a numerical criterion for when a triple of vector bundles $\cL_0,\cL_1,\cL_2$ on a smooth elliptic curve can be completed to an elliptic helix of period three. It is unfortunately, not so easy to check as it is in terms of positivity of a recursively defined sequence. The proof of this latter theorem is based on a criterion for being able to mutate vector bundles on elliptic curves given in Theorem~\ref{thm:evalDichotomy}.

\begin{notation} \label{notn:basedivrings}
Throughout, we let $(D_{i})_{i \in \mathbb{Z}}$ denote a sequence of division rings which we consider as ``base'' division rings. For many applications, all the $D_i$ will equal some fixed field $k$ which acts centrally on all objects. We let $D = \oplus D_i$ which we consider as a $\bZ$-indexed algebra concentrated in degree 0, that is, $D_{ii} = D_i$ and $D_{ij} = 0$ for $i \neq j$. By default, $D$-modules will be {\it left} $D$-modules unless otherwise stated, so a {\it $D$-module $\underline{\cL} = \oplus \cL_i$} in an additive category {\sf C} is just a collection of $D_i$-objects $\cL_i$ in ${\sf C}$, i.e. pairs $(\cL_i, \rho_i)$, where $\cL_i$ is an object in ${\sf C}$ and $\rho_i:D_i \to \End_{\sf C} \cL_{-i}$ is a ring homomorphism (the indexing convention is explained in Section \ref{sec.Background}). In practice, we will often start with a sequence of objects $\cL_i$ in {\sf C} whose endomorphism rings $\End \cL_i$ are division rings so we define $D_{-i} = \End \cL_i$, and so obtain a $D$-module in {\sf C}.

Given $M,N \in {\sf C}$, we abbreviate
$$
(M,N):= \Hom_{\sf C}(M,N)
$$
and
$$
^j(M,N) := \Ext^j_{\sf C}(M,N).
$$

If $X$ is a scheme, we let $\mbox{Coh}(X)$ denote the category of coherent sheaves over $X$.
\end{notation}

\section{Indexed bimodule algebras and Koszul theory}  \label{sec.Background}

In this section, we generalise the notion of Koszul $\bZ$-indexed $k$-algebra from \cite{bp} to the bimodule algebra setting.

Let $(D_{i})_{i \in \mathbb{Z}}$ denote a sequence of division rings and let $D = \oplus D_i$ as in Notation \ref{notn:basedivrings}.  A {\it $\bZ$-indexed $D$-algebra} is a ring $A$ with decomposition $A = \oplus_{i,j \in \bZ} A_{ij}$ such that
\begin{itemize}
\item{} $A_{ij}$ are $(D_i,D_j)$-bimodules,
\item{} multiplication is induced by associative multiplication maps $A_{ij} \otimes_{D_j} A_{jl} \to A_{jl}$ (multiplication $A_{ij} A_{kl}=0$ if $j \neq k$), and
\item{} each $A_{ii}$ contains a unit element $e_i$ satisfying the usual unit axiom.
\end{itemize}


Note the indexing convention differs from \cite{bp}. The {\it degree} of $A_{ij}$ and its elements is defined to be $j-i$. We will usually drop subscripts on $\otimes$ when we take tensor products of objects with indices: for example, $A_{ij} \otimes A_{jl} = A_{ij} \otimes_{D_j} A_{jl}$. This should not cause confusion since the default subscript, $D_i$, is determined by the index. Note that $D = \oplus D_i$ is a $\bZ$-indexed $D$-algebra with $D_{ii} = D_i$ and other summands zero.

Just as $k$-algebras often arise as endomorphisms of an object in a $k$-linear category {\sf C}, indexed $D$-algebras often come from a $D$-module $\underline{\mathcal{L}} := (\mathcal{L}_{i})_{i \in \mathbb{Z}}$ in an additive category {\sf C}. Indeed, such a module gives rise to a $\bZ$-indexed $D$-algebra $\textup{End}(\underline{\cL})$ defined as follows:
$$
\textup{End}(\underline{\cL})_{ij} := \Hom_{\sf C}(\cL_{-j},\cL_{-i})
$$
with multiplication defined as composition of homomorphisms. We choose our indexing convention for the following reasons: firstly, the order of indices is a result of our natural indexing on $\bZ$-indexed algebras. Secondly, the negative signs appear because we would like to view the $\cL_i$'s as analogues of line bundles $\cO(i)$ on a projective variety, and this convention keeps our $\bZ$-indexed algebra positively graded as a opposed to negatively graded. In addition, in the graded case, the sequence $\underline{\cL}$ typically comes from applying the orbit of an auto-equivalence to some object, so the endomorphism algebra is also called the {\it orbit algebra}.

Suppose now that $A$ is {\em locally finite} in the sense that all the $A_{ij}$ are finite dimensional on the left and right. Since {\sf C} is additive and $\cL_{-j}$ is a $D_{j}$-object in {\sf C}, $A_{ij} \otimes \cL_{-j}$ is a well-defined $D_i$-object in {\sf C} (see \cite[Section B3]{az3} for a concrete description of this tensor product in the context of an abelian category).  The tensor product is bifunctorial with respect to maps of finite-dimensional right $D_i$-modules and morphisms of $D_i$-objects, i.e. morphisms in ${\sf C}$ compatible with the $D_i$-action.  More precisely, versions of  \cite[Lemma B3.3, Lemma B3.9]{az3} hold in this context, and we will use this fact without comment in the sequel.

As one might expect from the non-indexed case, morphisms $A \to \End(\underline{\cL})$, correspond to left $A$-module structures on $\underline{\cL}$. The latter is a collection of multiplication maps
$$
\mu_{ij} \colon A_{ij} \otimes \cL_{-j} \longrightarrow \cL_{-i}
$$
satisfying the usual unit and associativity axioms of a module. The maps $\mu_{ij}$ induce morphisms $A_{ij} \to \End(\underline{\cL})_{ij}$ which define a morphism of $\bZ$-indexed algebras. Conversely, $\underline{\cL}$ is naturally a left $\End(\underline{\cL})$-module, so any morphism $A \to \End(\underline{\cL})$ defines an $A$-module structure on $\underline{\cL}$.

We turn now to generalising Koszul theory to the bimodule setting, taking our cue from \cite{bp}. One added complication is that given a $(D_i,D_j)$-bimodule $V$, we have both a left dual $^*V := \Hom_{D_i}(V,D_i)$ and a right dual $V^* := \Hom_{D_j}(V,D_j)$.
\begin{definition}  \label{def:quadratic}
A {\em quadratic $D$-algebra} is a locally finite $\bZ$-indexed $D$-algebra $A$, which is
\begin{enumerate}
    \item {\em positively graded} in the sense that $A_{ij} = 0$ for $i>j$,
    \item {\em connected} in the sense that all the $A_{ii} = D_i$
    \item {\em generated in degree one}, that is, by $\{A_{i, i+1}\}_{i \in \mathbb{Z}}$
    \item and has only quadratic relations $I_{i,i+2} \subset A_{i, i+1} \otimes A_{i+1, i+2}$ (see \cite{morinyman} for relevant definitions).
\end{enumerate}
In this case, we may define the {\em left Koszul dual} to be the negatively graded $\bZ$-indexed $D$-algebra $^!A$ generated in degree -1 by  $^!A_{i+1,i} := \ \ ^*A_{i,i+1}$ with relations $^{\bot}I_{i+2,i} \subset \ ^!A_{i+2,i+1} \otimes \ ^!A_{i+1,i}$ defined as the kernel of the map
$$
^!A_{i+2,i+1}\otimes \ ^!A_{i+1,i} = \ ^*A_{i+1,i+2} \otimes \ ^*A_{i,i+1} \cong \ ^*(A_{i,i+1} \otimes A_{i+1,i+2}) \rightarrow \ ^*I_{i,i+2}
$$
induced by the inclusion of relations.
\end{definition}
 We may similarly define the {\em right Koszul dual} $A^{!}$ and note that $(\ ^!A)^{!} \cong A$ as $^!A_{i+1,i}=\ ^*A_{i,i+1}$, and $(\,^{\bot}I_{i+2,i})^{\bot}$ is canonically isomorphic to $I_{i,i+2}$.

If $n \in \mathbb{Z}$ and $A$ is a $\mathbb{Z}$-indexed $D$-algebra, we let $A(n)$ denote the induced $\mathbb{Z}$-indexed $D$-algebra with $A(n)_{ij}=A_{i+n,j+n}$.  Following \cite[Section 2]{vdbquad}, we say $A$ is {\it $n$-periodic} if there is an isomorphism of $\mathbb{Z}$-indexed algebras $A \rightarrow A(n)$.

\begin{lemma} \label{lemma.Periodic}
If $A$ is quadratic and $^!A$ is $n$-periodic, then $A$ is $n$-periodic.
\end{lemma}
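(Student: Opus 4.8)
The plan is to reduce the statement to the fact recorded after Definition~\ref{def:quadratic} that double Koszul duality is canonically trivial, $({}^!A)^! \cong A$, the one extra ingredient being that the shift operation $(-)(n)$ commutes with Koszul duality; the result then falls out by stringing together canonical isomorphisms.

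First I would verify the shift--duality compatibility: for any quadratic $D$-algebra $C$ and any $n \in \mathbb{Z}$ there is a canonical isomorphism of $\mathbb{Z}$-indexed $D$-algebras ${}^!\big(C(n)\big) \cong \big({}^!C\big)(n)$, together with the analogous statement $\big(B(n)\big)^! \cong \big(B^!\big)(n)$ for the right Koszul dual of a negatively graded quadratic algebra $B$. This is a direct unwinding of Definition~\ref{def:quadratic}: on degree $-1$ generators, ${}^!\big(C(n)\big)_{i+1,i} = {}^*\big(C(n)_{i,i+1}\big) = {}^*C_{i+n,\,i+1+n}$, which coincides with $\big({}^!C\big)(n)_{i+1,i} = \big({}^!C\big)_{i+1+n,\,i+n} = {}^*C_{i+n,\,i+1+n}$; and since the space of quadratic relations of $C(n)$ in bidegree $(i,i+2)$ is $I_{i+n,\,i+2+n}$, the map in Definition~\ref{def:quadratic} that defines the relations of ${}^!\big(C(n)\big)$ is the $n$-shift of the one defining those of ${}^!C$. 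Hence the identification on generators propagates to an isomorphism of $\mathbb{Z}$-indexed algebras, and the right-dual version is proved identically.

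Next I would record that Koszul duality is functorial on isomorphisms: an isomorphism of quadratic $D$-algebras matches up the degree-one bimodules and the spaces of quadratic relations, and so induces an isomorphism of left (resp.\ right) Koszul duals, the variance being immaterial for isomorphisms. Applying this, the hypothesis that ${}^!A$ is $n$-periodic gives an isomorphism ${}^!A \xrightarrow{\ \sim\ } \big({}^!A\big)(n)$, to which I apply $(-)^!$ to obtain an isomorphism $\big(({}^!A)(n)\big)^! \xrightarrow{\ \sim\ } \big({}^!A\big)^!$ of $\mathbb{Z}$-indexed $D$-algebras. Combining this with the first step and with $({}^!A)^! \cong A$, I get a chain of isomorphisms of $\mathbb{Z}$-indexed $D$-algebras
$$
A(n) \;\cong\; \big(({}^!A)^!\big)(n) \;\cong\; \big(({}^!A)(n)\big)^! \;\cong\; \big({}^!A\big)^! \;\cong\; A,
$$
which is exactly the assertion that $A$ is $n$-periodic.

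The only thing requiring care is the bookkeeping between left and right duals: since ${}^!A$ is negatively graded, one recovers $A$ by taking its \emph{right} Koszul dual rather than the left, and accordingly it is the right-dual form of the shift-compatibility above that must be invoked. Aligning these conventions with the index shifts is the one place an error could slip in, but no step of the argument involves any genuine computation beyond chasing the definitions.
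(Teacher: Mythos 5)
Your argument is correct, but it is organized differently from the paper's. You reduce the statement to three formal facts: the double-duality identification $({}^!A)^! \cong A$ recorded after Definition~\ref{def:quadratic}, the compatibility ${}^!\bigl(C(n)\bigr) \cong \bigl({}^!C\bigr)(n)$ (and its right-dual analogue) of shift with quadratic duality, and the (contravariant, hence harmless on isomorphisms) functoriality of $(-)^!$, after which the conclusion is a chain of canonical isomorphisms $A(n) \cong \bigl(({}^!A)(n)\bigr)^! \cong ({}^!A)^! \cong A$. The paper instead dualizes by hand: it writes the periodicity data of ${}^!A$ as commutative diagrams of the defining short exact sequences $0 \to {}^{\bot}I_{i+2,i} \to {}^*(A_{i,i+1}\otimes A_{i+1,i+2}) \to {}^*I_{i,i+2} \to 0$, takes duals, and reads off directly that the degree-one isomorphisms of $A$ are compatible with its relations; no appeal to $({}^!A)^! \cong A$ or to a shift-compatibility lemma is needed. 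The mathematical core is the same in both cases (dualize the periodicity isomorphism and check it respects relations), but your packaging isolates reusable general statements about quadratic duality at the cost of having to verify them (which you do correctly, including the caveat that ${}^!A$ is negatively graded so the \emph{right} dual must be used to recover $A$), while the paper's diagram chase is more computational but self-contained. Your sketch of the auxiliary claims at the level of generators and relation spaces is adequate, since a quadratic algebra isomorphism is determined by its degree-one component together with the induced identification of relation spaces.
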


\begin{proof}
By hypothesis, there exist isomorphisms $^!A_{i+1,i} \longrightarrow  \ ^!A_{i+n+1,i+n}$ such that there is an induced commutative diagram with vertical isomorphisms and with right horizontals equal to multiplication
$$
\begin{CD}
0 & \longrightarrow & ^{\bot}I_{i+2,i} & \longrightarrow & ^!A_{i+2,i+1} \otimes \ ^!A_{i+1,i} & \longrightarrow & ^!A_{i+2,i} & \longrightarrow & 0 \\
& & @VVV @VVV @VVV \\
0 & \longrightarrow & ^{\bot}I_{i+n+2,i+n} & \longrightarrow & ^!A_{i+n+2,i+n+1} \otimes \ ^!A_{i+n+1,i+n} & \longrightarrow & ^!A_{i+n+2,i+n} & \longrightarrow & 0 \\
\end{CD}
$$
This induces a commutative diagram with vertical isomorphisms
$$
\begin{CD}
0 & \longrightarrow & ^{\bot}I_{i+2,i} & \longrightarrow & \,^*(A_{i,i+1} \otimes A_{i+1,i+2}) & \longrightarrow & ^*I_{i,i+2} & \longrightarrow & 0 \\
& & @VVV @VVV @VVV \\
0 & \longrightarrow & ^{\bot}I_{i+n+2,i+n} & \longrightarrow & \,^*(A_{i+n,i+n+1}\otimes A_{i+n+1,i+n+2}) & \longrightarrow & \,^*I_{i+n,i+n+2} & \longrightarrow & 0 \\
\end{CD}
$$
where the right horizontal maps are induced by inclusion.  By taking duals, we see that the isomorphisms in degree one are compatible with the relations in $A$, whence the result.
\end{proof}

Before introducing a fairly general notion of a Koszul complex, we present some motivational arguments. Let $\underline{\cL} = (\cL_i)$ be a $D$-module in an additive category {\sf C} and let $V_i = \Hom_{\sf C}(\cL_{-i-1}, \cL_{-i})$. If the $V_i$ are finite dimensional on both sides, so that the tensor algebra $T(\underline{V})$ over $D$ is locally finite, then the canonical morphism $T(\underline{V}) \to \End(\underline{\cL})$ makes $\underline{\cL}$ a left  $T(\underline{V})$-module.  {\it We will assume, throughout the rest of the paper, that $\underline{\cL}$ is such that $\End(\mathcal{L}_{i})$ and $V_{i}$ are finite dimensional on both sides for all $i \in \mathbb{Z}$.}

The following is a simple consequence of adjoint properties of tensor products of bimodules (see \cite[Section 2.1]{witt} for more details), and will be employed in Section \ref{sec.MapToEndo}:

\begin{lemma}   \label{lemma.factorViaUnit}
Let $W_i, W_{i+1}$ be finite dimensional right modules over $D_i, D_{i+1}$ respectively, so that $W_i \otimes \cL_{-i}, W_{i+1} \otimes \cL_{-i-1}$ are well-defined objects in {\sf C}. Consider a morphism
$\phi\colon W_{i+1} \otimes \cL_{-i-1} \to  W_i \otimes \cL_{-i}$ and the  associated composite map of $D_{i+1}$-spaces
$$ \delta \colon W_{i+1} = W_{i+1} \otimes D_{i+1} \to W_{i+1} \otimes  \End_{\sf C}(\cL_{-i-1}) \xto{\Hom(\cL_{-i-1},\phi)} W_i \otimes V_i.$$
Then $\phi$ factors in the following two ways.
\begin{enumerate}
    \item $\phi\colon W_{i+1} \otimes \cL_{-i-1} \xto{\delta \otimes 1} W_i \otimes V_i \otimes \cL_{-i-1} \xto{1 \otimes \mu} W_i \otimes \cL_{-i}
    $ \\
    where $\mu$ is module multiplication.
    \item $\phi \colon W_{i+1} \otimes \cL_{-i-1} \xto{1 \otimes \eta \otimes 1} W_{i+1} \otimes \ ^*V_i \otimes V_i \otimes \cL_{-i-1} \xto{m \otimes \mu} W_i \otimes \cL_{-i}  $
    where $m$ is the adjoint of $\delta$ and $\eta$ is the unit morphism.
\end{enumerate}
\end{lemma}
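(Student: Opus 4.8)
The plan is to read both factorisations off the tensor--hom adjunction for the additive-category tensor product recorded in \cite[Section B3]{az3}, using finiteness of $W_i$, $W_{i+1}$ and $V_i$ over the relevant division rings to move dual spaces around freely, in the spirit of \cite[Section 2.1]{witt}.

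For part (1) I would first note that both sides of the asserted identity are additive in $W_i$ and in $W_{i+1}$: a splitting $W_{i+1}=W'\oplus W''$ splits $W_{i+1}\otimes\cL_{-i-1}$, the morphism $\phi$, and the map $\delta$ compatibly, and similarly for $W_i$. Since $D_i$ and $D_{i+1}$ are division rings, $W_i$ and $W_{i+1}$ are free of finite rank, so it is enough to treat the case $W_i=D_i$, $W_{i+1}=D_{i+1}$. There $W_{i+1}\otimes\cL_{-i-1}=\cL_{-i-1}$, $W_i\otimes\cL_{-i}=\cL_{-i}$, and $\phi$ is simply an element of $V_i=\Hom_{\sf C}(\cL_{-i-1},\cL_{-i})$; unwinding its definition, $\delta$ is the map $D_{i+1}\to V_i$ sending $1$ to $\phi$, and $(1\otimes\mu)\circ(\delta\otimes 1)$ sends $x$ to $\mu(\phi\otimes x)=\phi(x)$ since $\mu$ is the evaluation morphism, so the two coincide. (Equivalently, without reducing: the composite isomorphism $\Hom_{\sf C}(W_{i+1}\otimes\cL_{-i-1},W_i\otimes\cL_{-i})\cong\Hom_{D_{i+1}}(W_{i+1},\Hom_{\sf C}(\cL_{-i-1},W_i\otimes\cL_{-i}))\cong\Hom_{D_{i+1}}(W_{i+1},W_i\otimes V_i)$ --- the first step the tensor--hom adjunction, valid as $W_{i+1}$ is free, the second from $W_i$ being free --- carries $\phi$ to $\delta$, because the adjunction unit corresponds to $1\otimes\rho_{i+1}$, and carries a general $\psi\colon W_{i+1}\to W_i\otimes V_i$ back to $(1\otimes\mu)\circ(\psi\otimes 1)$, because $\mu$ is the counit; taking $\psi=\delta$ then gives the claim.)

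Part (2) should then follow formally. Since $V_i$ is finite dimensional over $D_i$, the functors $-\otimes_{D_{i+1}}{}^*V_i$ and $-\otimes_{D_i}V_i$ are adjoint with unit the coevaluation $\eta\colon D_{i+1}\to{}^*V_i\otimes V_i$, so by definition of the adjoint $m$ of $\delta$ one has the triangle identity $\delta=(m\otimes 1_{V_i})\circ(1_{W_{i+1}}\otimes\eta)$. I would substitute this into the factorisation from (1) and rearrange, using associativity of $\otimes$ together with the interchange law $(1_{W_i}\otimes\mu)\circ(m\otimes 1_{V_i\otimes\cL_{-i-1}})=m\otimes\mu$; this gives
\[
\phi=(m\otimes\mu)\circ(1_{W_{i+1}}\otimes\eta\otimes 1_{\cL_{-i-1}}),
\]
which is statement (2).

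The main obstacle here is not any single step but the bookkeeping: keeping straight which $\Hom$'s and $\otimes$'s are taken over $D_i$ versus $D_{i+1}$, which module structures sit on the left and which on the right, and the left dual ${}^*V_i$ versus the right dual $V_i^*$, and verifying that the finiteness-driven identifications between module-$\Hom$ spaces and the additive tensor product of \cite{az3} are natural and associative enough to be chained and to commute with the tensor--hom adjunction inside {\sf C}. Once those compatibilities are in place --- essentially the content of \cite[Section 2.1]{witt} --- both factorisations drop out of the triangle identities.
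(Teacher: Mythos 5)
Your argument is correct and is exactly the route the paper intends: the paper gives no written proof, dismissing the lemma as "a simple consequence of adjoint properties of tensor products of bimodules" (citing Section 2.1 of \cite{witt}), and your write-up — part (1) via the tensor--hom adjunction after reducing to free (hence summand-wise trivial) $W$'s, part (2) by expressing $\delta$ through its adjoint $m$ via the unit/triangle identity $\delta=(m\otimes 1)\circ(1\otimes\eta)$ and substituting into (1) with the interchange law — is precisely that argument made explicit. No gaps; the bookkeeping of left versus right duals and of which division ring each tensor is over is handled correctly.
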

\begin{remark}
We will think of the morphism $m \colon W_{i+1} \otimes \ ^*V_i \to W_i$ as a type of module multiplication by elements of $^*V_i$.
\end{remark}
To define a Koszul complex, we need the data of a quadratic $D$-algebra $A$, a left $A$-module $\underline{\cL}$ in {\sf C} and a locally finite right $^!A$-module $W = \oplus W_{ij}$.
\begin{definition}\label{def:KoszulComplex}
We define the {\em $W$-Koszul complex} of $\underline{\cL}$ to be
\begin{equation} \label{eq.WKoszul}
W \otimes \underline{\cL} \colon
\ldots \to
W_{i+1} \otimes \cL_{-i-1} \xto{d}
W_{i} \otimes \cL_{-i} \to
W_{i-1} \otimes \cL_{-i+1} \to \ldots
\end{equation}
where the differential is
$$
W_{i+1} \otimes \cL_{-i-1}
\xto{1 \otimes \eta \otimes 1}
W_{i+1} \otimes \ ^!A_{i+1,i} \otimes A_{i,i+1} \otimes \cL_{-i-1}
\xto{m \otimes \mu}
W_{i} \otimes \cL_{-i}
$$
where $\eta$ is the unit map (noting $^!A_{i+1,i} = \ ^*A_{i,i+1}$) and $m,\mu$ are the $^!A$ and $A$-module multiplication maps.
\end{definition}

To prove the $W$-Koszul complex is actually a complex, we will need the following
\begin{lemma} \label{lemma.KjIsComplex}
\begin{enumerate}
    \item The composite of unit morphisms
    $$ D_{i+1} \xto{\eta} \ ^*A_{i,i+1} \otimes A_{i,i+1} \xto{1 \otimes \eta \otimes 1} \ ^*A_{i,i+1} \otimes \,^*A_{i-1,i} \otimes A_{i-1,i} \otimes A_{i,i+1}$$
    is the unit morphism on $A_{i-1,i} \otimes A_{i,i+1}$.
    \item The following is a complex
    $$
    D_{i+1} \xto{\eta} \ ^!A_{i+1,i} \otimes \ ^!A_{i,i-1} \otimes A_{i-1,i} \otimes A_{i,i+1} \xto{^!\mu \otimes \mu} \ ^!A_{i+1,i-1} \otimes A_{i-1,i+1}
    $$
    where the first map is the composite in part~(1) and $\mu, \,^!\mu$ are the algebra multiplication maps in $A,\,^!A$ respectively.
\end{enumerate}
\end{lemma}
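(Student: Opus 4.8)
The plan is to prove both parts by unwinding the definitions of the unit morphisms and the Koszul-dual multiplication, reducing everything to a formal statement about duals of bimodules. For part (1), the key observation is that the composite of unit morphisms in question is nothing but the unit of the adjunction for the tensor product $A_{i-1,i}\otimes A_{i,i+1}$, expressed as an iterated unit. Concretely, for a finite-dimensional right $D$-module $V$ the unit morphism $\eta\colon D\to {}^*V\otimes V$ is characterized by the triangle identities; and for a tensor product $V\otimes W$ of bimodules one has the standard canonical isomorphism ${}^*(V\otimes W)\cong {}^*W\otimes {}^*V$ compatible with units, so that $\eta_{V\otimes W}$ factors as $\eta_V$ followed by inserting $\eta_W$ in the middle. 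Taking $V = A_{i-1,i}$ and $W = A_{i,i+1}$ (noting the index bookkeeping: ${}^!A_{i+1,i} = {}^*A_{i,i+1}$ and ${}^!A_{i,i-1} = {}^*A_{i-1,i}$), this is exactly the assertion of part (1). I would phrase this as a short lemma on units and duals of tensor products of finite-dimensional bimodules over division rings, cite \cite{witt} for the adjunction generalities, and then apply it verbatim.

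For part (2), having identified the first map as the unit morphism on $A_{i-1,i}\otimes A_{i,i+1}$ via part (1), the composite $({}^!\mu\otimes\mu)\circ\eta$ is the image of $e_{i+1}\in D_{i+1}$ under the map that sends the identity of $A_{i-1,i}\otimes A_{i,i+1}$ (viewed inside ${}^*(A_{i-1,i}\otimes A_{i,i+1})\otimes(A_{i-1,i}\otimes A_{i,i+1})$) through ${}^!\mu\otimes\mu$. Now ${}^!\mu\colon {}^!A_{i+1,i}\otimes {}^!A_{i,i-1}\to {}^!A_{i+1,i-1}$ is by definition the corestriction along the quotient ${}^*(A_{i-1,i}\otimes A_{i,i+1})\twoheadrightarrow {}^!A_{i+1,i-1}$ — more precisely, ${}^!A_{i+1,i-1}$ sits inside ${}^!A_{i+1,i}\otimes {}^!A_{i,i-1}$ as the subspace ${}^\bot I_{i+1,i-1}$ annihilating the relations $I_{i-1,i+1}\subset A_{i-1,i}\otimes A_{i,i+1}$, and $\mu$ on the $A$ side factors the multiplication $A_{i-1,i}\otimes A_{i,i+1}\to A_{i-1,i+1}$ through this same quotient. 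So the claim that this is a complex, i.e. that the composite is zero, amounts to: the canonical element $\mathrm{id}\in {}^*U\otimes U$ for $U = A_{i-1,i}\otimes A_{i,i+1}$, when we pair off the ${}^\bot I$-component of ${}^*U$ against the image of $U$ in $A_{i-1,i+1}=U/I_{i-1,i+1}$, vanishes. This is a triviality of linear algebra: the canonical element of $V^*\otimes V$, restricted to a subspace $K\subset V^*$ and projected to a quotient $V\twoheadrightarrow V/L$, gives zero precisely when $K$ pairs trivially with a complement of $L$ composed with the projection — and here $K = {}^\bot I$ is by construction the annihilator of $L = I$, so the composite lands in $(V/L)\otimes$ (annihilator of $L$ paired into $V/L$), which is zero. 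I would make this precise by choosing dual bases (or better, arguing basis-free via the annihilator characterization) and checking the one identity.

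The main obstacle, such as it is, is purely bookkeeping: keeping the three different dualities straight (left dual ${}^*(-)$, the identification ${}^!A_{i+1,i}={}^*A_{i,i+1}$, and the canonical iso ${}^*(V\otimes W)\cong {}^*W\otimes{}^*V$) and making sure the unit morphisms compose in the order the differential in Definition~\ref{def:KoszulComplex} demands, rather than its opposite. There is no geometric or homological input needed — everything is formal manipulation with units and counits of the $\Hom$–$\otimes$ adjunction over division rings, exactly as in the classical (non-indexed, non-bimodule) Koszul setup, where the analogue of part (2) is the statement that $A^! \to A^!\otimes A^!\to A^!$ built from comultiplication-then-multiplication through the quadratic relations is zero. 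I expect part (1) to be essentially a one-line consequence of the adjunction formalism once stated correctly, and part (2) to require only the single linear-algebra identity about canonical elements, annihilators, and quotients described above.
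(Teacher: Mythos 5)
Your part (1) is fine and is essentially the paper's argument (the paper dismisses it as a simple calculation): compatibility of the unit morphisms with the canonical isomorphism ${}^*(V\otimes W)\cong {}^*W\otimes{}^*V$ is exactly the point. In part (2) you take the same overall route as the paper --- identify the image of $e_{i+1}$ with the canonical element of ${}^*U\otimes U$ for $U=A_{i-1,i}\otimes A_{i,i+1}$ and kill it using the annihilator property of ${}^{\perp}I$ --- but your reduction contains a genuine slip. You assert that ${}^!A_{i+1,i-1}$ ``sits inside ${}^!A_{i+1,i}\otimes{}^!A_{i,i-1}$ as the subspace ${}^{\perp}I_{i+1,i-1}$''. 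It does not: by Definition~\ref{def:quadratic}, ${}^{\perp}I_{i+1,i-1}$ is the space of \emph{relations} of ${}^!A$, so ${}^!A_{i+1,i-1}$ is the \emph{quotient} $\bigl({}^!A_{i+1,i}\otimes{}^!A_{i,i-1}\bigr)/{}^{\perp}I_{i+1,i-1}\cong{}^*I_{i-1,i+1}$, i.e.\ ${}^!\mu$ is the map ``restrict functionals to the subspace $I_{i-1,i+1}\subset U$'', not a projection onto ${}^{\perp}I$. (Your first clause, ``corestriction along the quotient'', is the correct description; the ``more precisely'' clause contradicts it.) The distinction is not cosmetic: if ${}^!\mu$ were projection onto the subspace ${}^{\perp}I$, the image of the canonical element under ${}^!\mu\otimes\mu$ would be the canonical element of ${}^{\perp}I\otimes(U/I)\cong(U/I)^*\otimes(U/I)$, which is nonzero whenever $I\neq U$. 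So the reduction you state --- ``pair off the ${}^{\perp}I$-component of ${}^*U$ against the image of $U$ in $U/I$'' --- computes a nonzero element, and the accompanying ``gives zero precisely when $K$ pairs trivially with a complement of $L$'' criterion is also off, since ${}^{\perp}I$ pairs perfectly, not trivially, with any complement of $I$.

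The correct mechanism is the dual-basis computation you defer to at the end, and it is exactly the paper's proof: choose a $D_{i-1}$-basis $v_1,\ldots,v_n$ of $U$ with $v_1,\ldots,v_l$ a basis of $I_{i-1,i+1}$, and let ${}^*v_1,\ldots,{}^*v_n$ be the dual basis, so that part (1) sends $e_{i+1}$ to $\sum_j {}^*v_j\otimes v_j$. For $j\le l$ the factor $v_j\in I$ is killed by $\mu$ (as $A$ is quadratic, $A_{i-1,i+1}=U/I$), while for $j>l$ the factor ${}^*v_j$ lies in ${}^{\perp}I$ and hence is killed by ${}^!\mu$, being a relation of ${}^!A$. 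Every term thus dies in one tensor factor or the other. Once you replace the ``sits inside as ${}^{\perp}I$'' identification by the quotient description, your argument coincides with the paper's.
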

\begin{proof}
Part~(1) is just a simple calculation. For part~(2), let $I \subset A_{i-1,i} \otimes A_{i,i+1}$ be the space of quadratic relations on $e_{i-1}Ae_{i+1}$. Pick a $D_{i-1}$-basis $v_1, \ldots , v_n$ for $A_{i-1,i} \otimes A_{i,i+1}$ such that $v_1,\ldots,v_l$ is a basis for $I$, and let $^*v_1, \ldots, \,^*v_n$ be the dual basis. Then $(\,^!\mu \otimes \mu)(\sum \ ^*v_j \otimes v_j) = 0$ since $v_j \in I$ for $j \leq l$ whilst $^*v_j \in \ ^{\perp}I$ for $j > l$.
\end{proof}

\begin{proposition}  \label{prop.KjIsComplex}
The chain of morphisms $W \otimes \underline{\cL}$ in Equation~\ref{eq.WKoszul} is indeed a complex.
\end{proposition}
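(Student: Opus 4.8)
The plan is to verify that the composite of two consecutive differentials in the $W$-Koszul complex vanishes by reducing it, via naturality of the unit morphisms and the defining factorisations of the differential, to the complex established in Lemma~\ref{lemma.KjIsComplex}(2). Concretely, I would write out the composite
$$
W_{i+1} \otimes \cL_{-i-1} \xto{d} W_i \otimes \cL_{-i} \xto{d} W_{i-1} \otimes \cL_{-i+1}
$$
by expanding each $d$ according to Definition~\ref{def:KoszulComplex}: each differential is $(m \otimes \mu)\circ(1 \otimes \eta \otimes 1)$, where $\eta$ inserts the unit into $^!A_{i+1,i} \otimes A_{i,i+1}$ (resp. $^!A_{i,i-1} \otimes A_{i-1,i}$), and $m,\mu$ are the module multiplications. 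The key point is that the right $^!A$-module multiplications on $W$ and the left $A$-module multiplications on $\underline{\cL}$ are associative, so all the $W$-factors and all the $\cL$-factors can be regrouped. After this regrouping, the composite factors through
$$
W_{i+1} \otimes \bigl(D_{i+1} \xto{\eta'} {}^!A_{i+1,i} \otimes {}^!A_{i,i-1} \otimes A_{i-1,i} \otimes A_{i,i+1} \xto{^!\mu \otimes \mu} {}^!A_{i+1,i-1}\otimes A_{i-1,i+1}\bigr) \otimes \cL_{-i-1},
$$
where $\eta'$ is exactly the composite of unit morphisms described in Lemma~\ref{lemma.KjIsComplex}(1), followed by the module multiplications $m\colon W_{i+1}\otimes {}^!A_{i+1,i-1}\to W_{i-1}$ and $\mu\colon A_{i-1,i+1}\otimes\cL_{-i-1}\to\cL_{-i+1}$.

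The main step is therefore to justify this factorisation rigorously. I would do this by a diagram chase: build a large commutative diagram whose commutativity follows from (a) bifunctoriality of the tensor product $(-)\otimes\cL_{-j}$ in the right $D_j$-module slot and in the ${\sf C}$-morphism slot (the versions of \cite[Lemma B3.3, Lemma B3.9]{az3} invoked earlier), (b) associativity of the $^!A$-action on $W$ and of the $A$-action on $\underline{\cL}$, and (c) naturality of the unit morphisms $\eta$ for the various bimodule adjunctions, together with the compatibility asserted in Lemma~\ref{lemma.factorViaUnit}. The subtle bookkeeping here is that the two copies of $\eta$ coming from the two differentials do not appear ``in series'' in the naive way; one must commute the second $\eta$ (which a priori acts on the $W_i$ produced by the first differential) back past the module multiplication $m$ of the first differential, and it is precisely part~(1) of Lemma~\ref{lemma.KjIsComplex} that makes this legitimate, identifying the resulting iterated unit with the unit morphism on $A_{i-1,i}\otimes A_{i,i+1}$.

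Once the composite is expressed as $(m\otimes\mu)$ applied to $W_{i+1}\otimes(\text{the map of Lemma~\ref{lemma.KjIsComplex}(2)})\otimes\cL_{-i-1}$, we are done immediately: Lemma~\ref{lemma.KjIsComplex}(2) says that middle map is zero, hence so is the whole composite. I expect the genuine obstacle to be purely organisational rather than conceptual, namely assembling the commutative diagram of step two without sign or indexing errors, since each object is a fourfold (or larger) tensor product of bimodules over varying $D_i$'s and the unit morphisms for left versus right duals must be tracked carefully; once the diagram is laid out, every square commutes by one of the cited functoriality or associativity properties, and the result follows.
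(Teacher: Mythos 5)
Your proposal is correct and follows the same route as the paper: the paper's (very terse) proof simply invokes associativity of module multiplication and then cites Lemma~\ref{lemma.KjIsComplex}, which is precisely the regrouping-plus-vanishing argument you spell out in detail. You have correctly identified both where Lemma~\ref{lemma.KjIsComplex}(1) enters (to commute the second unit past the first module multiplication and recognize the iterated unit) and where part (2) gives the vanishing, so your expansion is a faithful and more explicit version of the paper's argument.
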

\begin{proof}
Since the differential in $W \otimes \underline{\cL}$ is defined using module multiplication which is associative, the result follows from Lemma \ref{lemma.KjIsComplex}.
\end{proof}

Before we give our definition of Koszul algebra, we introduce the following notation.  Let $B$ be a $\mathbb{Z}$-indexed $D$-algebra.  We abuse notation by letting $e_{j}B^{*}$ denote the $D_{j}-B$-bimodule with $$
(e_{j}B^{*})_{i}:= \Hom_{D_{j}}(B_{ij},D_{j}) = B_{ij}^*
$$
and with right multiplication induced in the usual way by the left $B$-module structure of $Be_{j}$.  In constructing the Koszul complex, defined below, the only graded right $^!A$-modules we will use are those of the form $e_{j}{}^{!}A^{*}$.


\begin{definition}  \label{defn.AbstractKoszul}
Given an $A$-module $\underline{\cL}$ in an additive category {\sf C}, the {\em degree $j$  Koszul complex} is the Koszul complex ${e_j}{}^!A^* \otimes \underline{\cL}$.
\end{definition}



\begin{definition}  \label{defn.Koszul}
Suppose $A$ is a quadratic $D$-algebra, ${\sf C}$ is the category of graded right $A$-modules and $\underline{\cL}$ is the module $\cL_{-j} = e_j A$. We say $A$ is {\it Koszul} if, for all $j \in \mathbb{Z}$, the degree $j$ Koszul complex ${e_j}^!A^*\otimes \underline{\cL}$ is a resolution of the corresponding simple module $e_jA/(e_jA)_{>j}$.
\end{definition}

\begin{remark} \label{rem.dualkoszul}
Suppose $k$ is a field, $D_{i}=k$ for all $i \in \mathbb{Z}$ and $A$ is a quadratic $D$-algebra.  If
$$
 \,^!A^{*}_{l,j} \otimes A_{l,q} \overset{\psi}{\longrightarrow} \,^!A^{*}_{l-1,j} \otimes A_{l-1,q}
$$
is the $q$th component of the differential in the degree $j$ Koszul complex, then it is straightforward to show, in light of the fact that $({}^!A)^! \cong A$, that the $k$-vector space dual of $\psi$:
$$
A^{*}_{l-1,q} \otimes \,^!A_{l-1,j} \longrightarrow A^{*}_{l,q} \otimes \,^!A_{l,j}
$$
is the $j$th component of the differential in the degree $q$ Koszul complex of $\,^!A$.  Furthermore, if $A$ is Koszul, then the $q$th degree component of the Koszul complex
$$
 \,^!A^{*}_{l,j} \otimes A_{l,q} \longrightarrow \,^!A^{*}_{l-1,j} \otimes A_{l-1,q} \longrightarrow \,^!A^{*}_{l-2,j} \otimes A_{l-2,q}
$$
is exact if either $j<l-1$ or $j=l-1$ and $q \geq l$.  Dualizing, we get that
$$
 \,A^{*}_{l-2,q} \otimes \,^!A_{l-2,j} \longrightarrow \,A^{*}_{l-1,q} \otimes \,^!A_{l-1,j} \longrightarrow \,A^{*}_{l,q} \otimes \,^!A_{l,j}
$$
is exact under the same conditions.  Thus, in case $q \geq l$, we get an exact sequence
$$
\,A^{*}_{l-2,q} \otimes e_{l-2}\,^!A \longrightarrow \,A^{*}_{l-1,q} \otimes e_{l-1}\,^!A \longrightarrow \,A^{*}_{l,q} \otimes e_{l}\,^!A,
$$
while if $q=l-1$, the complex is exact in degrees $<l-1$.
 Thus, $\,^!A$ is Koszul.

 Note that an analogous result holds if $A$ is a negatively graded quadratic algebra generated by $\{A_{i,i-1}\}_{i \in \mathbb{Z}}$ and we define the Koszul complex in the obvious way.  It follows that if $A$ is a positively graded quadratic $D$-algebra such that ${}^{!}A$ is Koszul, then $A$ is Koszul (see \cite[Proposition 2.9.1]{ginz} for a proof in the $\mathbb{Z}$-graded case).
\end{remark}

\section{Morphisms to endomorphism algebras}  \label{sec.MapToEndo}

In the last section, we saw that given a quadratic $D$-algebra $A$ and an $A$-module $\underline{\cL}$ in some additive category {\sf C}, we may construct Koszul complexes. In this section, we show conversely that a type of complex, called a {\it pre-helix} (which resembles part of a Koszul complex), can be used to construct an $A$-module and hence a morphism from $A$ to the endomorphism algebra $\End(\underline{\cL})$.

Let $A$ be a quadratic $D$-algebra. Below, we will need to use its Koszul complex which in degree $j$ is
\begin{equation} \label{eqn.KoszulCompA}
\cdots \longrightarrow \,^!A^{*}_{j+2,j} \otimes e_{j+2}A \xto{\psi_2} \,^!A^{*}_{j+1,j} \otimes e_{j+1}A \xto{\psi_1} \,^!A^{*}_{j,j}\otimes e_{j}A
\end{equation}
The following is well-known and easily verified.
\begin{proposition}  \label{prop.KoszulEndTerms}
\begin{enumerate}
    \item The morphism $\psi_1$ above is the composite of the natural isomorphism $^!A^{*}_{j+1,j} \simeq A_{j,j+1}$ (tensored with $e_{j+1}A$) and the multiplication map $A_{j.j+1} \otimes e_{j+1}A \to e_jA$.
    \item The morphism $\psi_2$ is the composite of the inclusion of relations
    $\rho \colon ^!A^{*}_{j+2,j} \hookrightarrow \,^!A^{*}_{j+1,j} \otimes \,^!A^*_{j+2,j+1}$ and multiplication
    $$
    A_{j+1.j+2} \otimes e_{j+2}A \to e_{j+1}A,
    $$
    where the isomorphism $^!A^*_{j+2,j+1} \simeq A_{j+1,j+2}$ in (1) above has been used.
\end{enumerate}
\end{proposition}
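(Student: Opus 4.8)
The plan is to prove the proposition by directly unwinding the definition of the degree $j$ Koszul complex. By Definition~\ref{defn.AbstractKoszul}, the complex \eqref{eqn.KoszulCompA} is the $W$-Koszul complex of $\underline{\cL}$ (Definition~\ref{def:KoszulComplex}) for $W = e_j{}^!A^*$ and $\underline{\cL}$ the $A$-module $\cL_{-i} = e_iA$ with left action given by multiplication. I would start by recording the low-degree identifications: $W_j = {}^!A^*_{j,j} = D_j^*$, canonically $D_j$; $W_{j+1} = {}^!A^*_{j+1,j}$, canonically $A_{j,j+1}$ via the biduality $({}^*V)^*\cong V$ for finite-dimensional $(D_i,D_{i+1})$-bimodules together with ${}^!A_{j+1,j} = {}^*A_{j,j+1}$; and $W_{j+2} = {}^!A^*_{j+2,j}$, canonically $I_{j,j+2}\subseteq A_{j,j+1}\otimes A_{j+1,j+2}$ via the presentation ${}^!A_{j+2,j} = ({}^!A_{j+2,j+1}\otimes{}^!A_{j+1,j})/{}^{\perp}I_{j+2,j}$ and the isomorphism $({}^{\perp}I)^{\perp}\cong I$ recorded after Definition~\ref{def:quadratic}. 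I would also record that, since the $A$-action on $\underline{\cL}$ is multiplication, the map $\mu$ appearing in the differential of Definition~\ref{def:KoszulComplex} is, in each degree, the multiplication $A_{i,i+1}\otimes e_{i+1}A\to e_iA$.

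For part~(1), $\psi_1$ is the differential of Definition~\ref{def:KoszulComplex} at $i=j$, namely
$$
{}^!A^*_{j+1,j}\otimes e_{j+1}A\xto{1 \otimes \eta \otimes 1}{}^!A^*_{j+1,j}\otimes {}^!A_{j+1,j}\otimes A_{j,j+1}\otimes e_{j+1}A\xto{m \otimes \mu}{}^!A^*_{j,j}\otimes e_jA,
$$
with $m$ the ${}^!A$-module multiplication on $e_j{}^!A^*$ and $\mu$ as above. Reassociating (this is the content of Lemma~\ref{lemma.factorViaUnit}(1)), $\psi_1$ factors as $\delta_j\otimes 1$ followed by $1\otimes\mu$, where
$$
\delta_j\colon {}^!A^*_{j+1,j}\xto{1 \otimes \eta}{}^!A^*_{j+1,j}\otimes {}^!A_{j+1,j}\otimes A_{j,j+1}\xto{m \otimes 1}{}^!A^*_{j,j}\otimes A_{j,j+1} = D_j^*\otimes A_{j,j+1}.
$$
Since $\mu$ is already the multiplication appearing in the statement, it remains to identify $\delta_j$, after $D_j^*\cong D_j$, with the canonical isomorphism ${}^!A^*_{j+1,j}\cong A_{j,j+1}$. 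This is a dual-basis check: $\eta$ is the coevaluation for the pairing between ${}^!A_{j+1,j} = {}^*A_{j,j+1}$ and $A_{j,j+1}$, and in degrees $\le 1$ the ${}^!A$-module multiplication $m$ on $e_j{}^!A^*$ is the evaluation $({}^*A_{j,j+1})^*\otimes{}^*A_{j,j+1}\to D_j$ (the module structure in those degrees being determined by its degree-$(-1)$ component), so $\delta_j$ is precisely the zig-zag composite recovering biduality.

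Part~(2) is parallel, with $\psi_2$ the differential at $i=j+1$; again $\mu$ is the multiplication $A_{j+1,j+2}\otimes e_{j+2}A\to e_{j+1}A$, so it suffices to identify
$$
\delta_{j+1}\colon {}^!A^*_{j+2,j}\xto{1 \otimes \eta}{}^!A^*_{j+2,j}\otimes {}^!A_{j+2,j+1}\otimes A_{j+1,j+2}\xto{m \otimes 1}{}^!A^*_{j+1,j}\otimes A_{j+1,j+2}
$$
with $\rho$ followed by the isomorphism ${}^!A^*_{j+2,j+1}\cong A_{j+1,j+2}$ of part~(1). Here $m\colon{}^!A^*_{j+2,j}\otimes{}^!A_{j+2,j+1}\to{}^!A^*_{j+1,j}$ is, by construction of $e_j{}^!A^*$, the transpose of left multiplication ${}^!A_{j+2,j+1}\otimes{}^!A_{j+1,j}\to{}^!A_{j+2,j}$. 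Choosing a basis $\{w_\alpha\}$ of ${}^!A_{j+2,j+1} = {}^*A_{j+1,j+2}$ with dual basis $\{x_\alpha\}$ of $A_{j+1,j+2}$, so that $\eta(1) = \sum_\alpha w_\alpha\otimes x_\alpha$, one computes $\delta_{j+1}(\xi) = \sum_\alpha (\xi\cdot w_\alpha)\otimes x_\alpha$; transporting along $A_{j+1,j+2}\cong{}^!A^*_{j+2,j+1}$, this becomes the image of $\xi$ under the transpose of multiplication ${}^!A^*_{j+2,j}\to({}^!A_{j+2,j+1}\otimes{}^!A_{j+1,j})^*\cong{}^!A^*_{j+1,j}\otimes{}^!A^*_{j+2,j+1}$, the last isomorphism being analogous to the one in Definition~\ref{def:quadratic}. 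This transpose is the inclusion of relations $\rho$: its image is $({}^{\perp}I_{j+2,j})^{\perp}\cong I_{j,j+2}$, which under the identifications of part~(1) is the inclusion $I_{j,j+2}\hookrightarrow A_{j,j+1}\otimes A_{j+1,j+2}$.

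I expect the only genuine difficulty to be careful bookkeeping rather than any conceptual obstacle. One must consistently distinguish the left dual ${}^*(-)$ from the right dual $(-)^*$ (they are interchanged in passing between $A$ and ${}^!A$), confirm that the biduality isomorphisms $({}^*V)^*\cong V$, $({}^{\perp}I)^{\perp}\cong I$, and the dual-of-a-tensor-product isomorphism used above are the canonical ones and are mutually compatible, and check that the relevant zig-zag identities hold for $(D_i,D_j)$-bimodules rather than merely for vector spaces. Granting this care, the proposition is the bimodule-indexed form of the classical fact that the first two differentials of the Koszul resolution of $A$ are ``multiplication by a degree-one generator'' and ``inclusion of relations followed by multiplication'', which is why it is well known and easily verified.
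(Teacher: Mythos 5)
Your argument is correct: unwinding Definition~\ref{def:KoszulComplex} for $W=e_j{}^!A^*$ with $\cL_{-i}=e_iA$, identifying ${}^!A^*_{j+1,j}\cong A_{j,j+1}$ by biduality, and the dual-basis computation showing $\delta_{j+1}$ is the transpose of ${}^!A$-multiplication, hence the inclusion of relations, is exactly the verification the paper has in mind. The paper itself offers no proof (the result is stated as ``well-known and easily verified''), so your write-up simply supplies that omitted check by the same natural route, and the points you flag (left versus right duals, compatibility of the canonical isomorphisms) are indeed the only places requiring care.
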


\begin{definition}  \label{def:prehelix}
A {\it pre-helix over } $A$ in {\sf C} consists of the data of
\begin{enumerate}
\item{} a $D$-module $\underline{\cL}$ in {\sf C} such that the natural morphisms $D_i \to \End \cL_{-i}$ are isomorphisms,
\item{} for each $j \in \bZ$, complexes of the form
\begin{equation} \label{eqn.PreHelix}
^!A^{*}_{j+2,j} \otimes \mathcal{L}_{-j-2}  \overset{\phi_{2}}{\longrightarrow} \,^!A^{*}_{j+1,j} \otimes \mathcal{L}_{-j-1} \overset{\phi_{1}}{\longrightarrow} \,^!A^{*}_{j,j}\otimes \mathcal{L}_{-j}
\end{equation}
\end{enumerate}
such that
\begin{enumerate}[(a)]
\item{} $(\mathcal{L}_{-j-1},\phi_1)$ yields a map
$$
 \mu_j \colon A_{j,j+1} \stackrel{\text{can}}{\simeq} \,^!A^{*}_{j+1,j} \otimes D_{j+1} \xto{(\mathcal{L}_{-j-1},\phi_1)} \,^!A^{*}_{j,j} \otimes (\cL_{-j-1},\cL_{-j})  \stackrel{\text{can}}{\simeq} (\cL_{-j-1},\cL_{-j})
$$
which is left $D_j$-linear (it is automatically right $D_{j+1}$-linear), and

\item{} $(\mathcal{L}_{-j-2},\phi_2)$ yields a map
$$^!A^{*}_{j+2,j} \stackrel{\text{can}}{\simeq}
 \,^!A^{*}_{j+2,j} \otimes D_{j+2} \xto{(\mathcal{L}_{-j-2},\phi_2)} \,^!A^{*}_{j+1,j} \otimes (\cL_{-j-2},\cL_{-j-1})
$$
which factors as the inclusion of relations map
$$
\rho\colon ^!A^*_{j+2,j} \to ^!A^*_{j+1,j} \otimes A_{j+1,j+2}
$$
in Proposition~\ref{prop.KoszulEndTerms}(2) and
$$1 \otimes \mu_{j+1} \colon ^!A^*_{j+1,j} \otimes A_{j+1,j+2} \to \,^!A^*_{j+1,j} \otimes (\cL_{-j-2},\cL_{-j-1}).$$
\end{enumerate}
If the morphisms $\mu_j$ in part~(a) are isomorphisms for all $j$, we say that the pre-helix is {\it complete}.
\begin{remark}
The notion of a pre-helix is meant to be an analogue of the notion of linear systems. In the latter setup, we have a number of global sections of some line bundle $\cL$ on a $k$-scheme $X$ or, more invariantly, a morphism of vector spaces $V \to H^0(X,\cL)$. The maps $\mu_j$ in Definition~\ref{def:prehelix}(a) are analogues of this map so complete pre-helices are analogues of complete linear systems.
\end{remark}
\end{definition}

\begin{proposition} \label{prop.MapToEndo}
If $\underline{\mathcal{L}}$ is a pre-helix over $A$, there exists an induced morphism of connected $\mathbb{Z}$-indexed algebras $A \longrightarrow \End(\underline{\mathcal{L}})$. When $\underline{\cL}$ is complete, this is an isomorphism in degree one.
\end{proposition}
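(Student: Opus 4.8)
The plan is to build the left $A$-module structure on $\underline{\cL}$ out of the multiplication maps $\mu_j \colon A_{j,j+1} \to (\cL_{-j-1},\cL_{-j})$ supplied by Definition~\ref{def:prehelix}(a), extend these to all of $A$ by composition, check the unit and associativity axioms, and then invoke the correspondence (recalled in Section~\ref{sec.Background}) between left $A$-module structures on $\underline{\cL}$ and morphisms of $\mathbb{Z}$-indexed algebras $A \to \End(\underline{\cL})$. First I would note that the $\mu_j$ give maps $A_{j,j+1}\otimes \cL_{-j-1}\to \cL_{-j}$, which is the degree-one piece of the required multiplication $\mu_{ij}\colon A_{ij}\otimes\cL_{-j}\to\cL_{-i}$. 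Since $A$ is generated in degree one and connected, for $j-i\geq 2$ one defines $\mu_{ij}$ to be the composite $A_{ij}\otimes\cL_{-j}\twoheadleftarrow A_{i,i+1}\otimes\cdots\otimes A_{j-1,j}\otimes\cL_{-j}\to\cL_{-i}$, factoring through the generation-in-degree-one surjection and iterating the $\mu$'s; for $i=j$ it is the $D_i$-action. For this to be well-defined one must check it kills the relations of $A$, i.e. that $\mu_j$ and $\mu_{j+1}$ compose to zero on $I_{j,j+2}\subset A_{j,j+1}\otimes A_{j+1,j+2}$ — and this is exactly what condition (b) of the pre-helix encodes, since $\phi_1\phi_2=0$ (the sequence \eqref{eqn.PreHelix} is a complex) together with the factorization of $\phi_2$ through the inclusion of relations $\rho$ and $1\otimes\mu_{j+1}$ forces the composite $\mu_j\circ(1\otimes\mu_{j+1})$ to annihilate the image of $\rho$, which under the identifications $^!A^*_{j+2,j}\simeq (A_{j,j+1}\otimes A_{j+1,j+2})/\text{ann}$ is precisely $I_{j,j+2}$.

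Next I would verify associativity and the unit axiom. The unit axiom is immediate from condition~(1) of Definition~\ref{def:prehelix}, that $D_i\to\End\cL_{-i}$ is an isomorphism, together with $D_j$-linearity of $\mu_j$ in~(a). Associativity $\mu_{il}\circ(m_{ij}\otimes 1)=\mu_{il}\circ(1\otimes\mu_{jl})$ reduces, because $A$ is generated in degree one, to the case of a single degree-one factor on one side, where it holds by construction of $\mu_{ij}$ as an iterated composite once one knows the relation-killing above — the only subtlety being to confirm that the two bracketings of a product of degree-one elements induce the same map after passing through the generation surjection, which follows from associativity of multiplication in $A$ and functoriality of $-\otimes\cL$. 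Assembling these, the maps $\mu_{ij}$ give $\underline{\cL}$ the structure of a left $A$-module, hence by the discussion preceding Definition~\ref{def:quadratic} a morphism of connected $\mathbb{Z}$-indexed algebras $A\to\End(\underline{\cL})$ whose degree-one component $A_{j,j+1}\to\End(\underline{\cL})_{j,j+1}=(\cL_{-j-1},\cL_{-j})$ is, by construction and Lemma~\ref{lemma.factorViaUnit}(1), exactly $\mu_j$.

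Finally, if $\underline{\cL}$ is complete then each $\mu_j$ is by definition an isomorphism, so the degree-one part of $A\to\End(\underline{\cL})$ is an isomorphism, proving the last assertion. The main obstacle I anticipate is the bookkeeping in the well-definedness step: one must carefully track the canonical identifications $^!A^*_{j+1,j}\simeq A_{j,j+1}$ and $^!A^*_{j+2,j}\hookrightarrow \,^!A^*_{j+1,j}\otimes\,^!A^*_{j+2,j+1}$ from Proposition~\ref{prop.KoszulEndTerms}, and the adjunction/unit manipulations of Lemma~\ref{lemma.factorViaUnit}, to see that "$\phi_2$ factors through the relations" translates precisely into "$\mu$ annihilates $I_{j,j+2}$" — everything downstream (associativity for higher degrees, the unit axiom, the degree-one statement) is then formal given that $A$ is generated in degree one with quadratic relations.
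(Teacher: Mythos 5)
Your proposal is correct and follows essentially the same route as the paper: the degree-one maps $\mu_j$ induce a morphism from the tensor algebra $T(\underline{V})$ (equivalently your hand-built $\mu_{ij}$), and conditions (a), (b) of Definition~\ref{def:prehelix} together with Lemma~\ref{lemma.factorViaUnit}(1) identify $\phi_1,\phi_2$ with module multiplication composed with the inclusion of relations, so that $\phi_1\phi_2=0$ says exactly that the quadratic relations act trivially, giving the factorization through $A$; completeness then gives the degree-one isomorphism since that component is $\mu_j$. One small correction: the identification via $\rho$ is of $^!A^{*}_{j+2,j}$ with the relation \emph{subspace} $I_{j,j+2}\subset A_{j,j+1}\otimes A_{j+1,j+2}$, not with a quotient of $A_{j,j+1}\otimes A_{j+1,j+2}$.
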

\begin{proof}
We construct an $A$-module structure on $\underline{\cL}$. Let $\underline{V}$ denote the degree one part of $A$ so by definition, $A = T(\underline{V})/ \langle (\,^!A^{*}_{j+2,j})_{j \in \bZ})\rangle$.  The ``multiplication'' maps $\mu_j$ in Definition~\ref{def:prehelix}(a) define an algebra morphism $T(\underline{V}) \to \End(\underline{\cL})$ which we wish to show factors through $A$.

By Lemma \ref{lemma.factorViaUnit}(1) and property (a) in Definition \ref{def:prehelix}, the map $\phi_1$ is the composition
$$
A_{j,j+1} \otimes \mathcal{L}_{-j-1} \overset{\mu_{j} \otimes \mathcal{L}_{-j-1}}{\rightarrow} (\mathcal{L}_{-j-1},\mathcal{L}_{-j}) \otimes \mathcal{L}_{-j-1} \rightarrow \mathcal{L}_{-j},
$$
where the rightmost arrow is canonical.  Thus, $\phi_{1}$ is just $T(\underline{V})$-module multiplication.

Similarly, by Lemma \ref{lemma.factorViaUnit}(1) and property (b) in Definition \ref{def:prehelix}, $\phi_2$ is the composite of maps i) induced by the inclusion of relations $^!A^{*}_{j+2,j} \hookrightarrow \,^!A^{*}_{j+1,j} \otimes \,^!A^*_{j+1,j+2}$ and ii) $T(\underline{V})$-module multiplication $A_{j+1.j+2} \otimes \cL_{-j-2} \to \cL_{-j-1}$. The fact that (\ref{eqn.PreHelix}) is a complex implies $\phi_1\phi_2 = 0$, which thus amounts to saying that the relations $^!A^{*}_{j+2,j}$ act trivially on $\underline{\cL}$ so $\underline{\cL}$ is actually an $A$-module and the proposition is proved.

\end{proof}
\begin{remark}  \label{rem:preIsKoszul}
Under the hypotheses of the proposition, $\underline{\cL}$ is an $A$-module so we may speak of the Koszul complexes ${e_j}^!A^* \otimes \underline{\cL}$. The proof above shows that the complex (\ref{eqn.PreHelix}) is actually the start of this Koszul complex.
\end{remark}

There is an elementary converse to Proposition~\ref{prop.MapToEndo}.
\begin{proposition}  \label{prop:AmodGivesPreHelix}
Let $\underline{\cL}$ be a left module over a quadratic $D$-algebra $A$ such that the natural morphisms $D_i \to \End_{\sf C} \cL_{-i}$ are isomorphisms. Then $\underline{\cL}$ is a pre-helix over $A$ when enriched with the data of the Koszul complexes.
\end{proposition}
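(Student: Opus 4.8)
The plan is to verify, one condition at a time, that $\underline{\cL}$ together with, for each $j \in \bZ$, the degree $j$ Koszul complex ${e_j}{}^!A^{*}\otimes\underline{\cL}$ of Definition~\ref{defn.AbstractKoszul} satisfies Definition~\ref{def:prehelix}. Condition~(1) of that definition is exactly the hypothesis placed on $\underline{\cL}$. For condition~(2), I first record that since $^!A$ is negatively graded and connected one has $(e_j{}^!A^{*})_i = {}^!A^{*}_{ij}$, which vanishes for $i<j$ and equals $D_j$ for $i=j$; hence the truncation
$$
{}^!A^{*}_{j+2,j}\otimes\cL_{-j-2}\xto{\phi_2}{}^!A^{*}_{j+1,j}\otimes\cL_{-j-1}\xto{\phi_1}{}^!A^{*}_{j,j}\otimes\cL_{-j}
$$
of ${e_j}{}^!A^{*}\otimes\underline{\cL}$ genuinely has the shape of \eqref{eqn.PreHelix}, and it is a complex by Proposition~\ref{prop.KjIsComplex}.

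The substance is the identification of the two differentials $\phi_1,\phi_2$ with module multiplication, which is the module-theoretic analogue of Proposition~\ref{prop.KoszulEndTerms}. The differential of the $W$-Koszul complex (Definition~\ref{def:KoszulComplex}) is assembled from a unit map followed by $m\otimes\mu$, where for $W=e_j{}^!A^{*}$ the map $m$ is the evaluation pairing arising from the right $^!A$-module structure on $e_j{}^!A^{*}$ and $\mu$ is the left $A$-module multiplication on $\underline{\cL}$. The computation that proves Proposition~\ref{prop.KoszulEndTerms} uses only this recipe, the canonical dualities ${}^!A^{*}_{j+1,j}\simeq A_{j,j+1}$ and ${}^!A^{*}_{j,j}\simeq D_j$, and the definition of the inclusion of relations $\rho$; none of this refers to the tautological module $\cL_{-j}=e_jA$, so it applies verbatim to our $\underline{\cL}$ and shows that $\phi_1$ is the canonical isomorphism ${}^!A^{*}_{j+1,j}\simeq A_{j,j+1}$ tensored with $\cL_{-j-1}$, followed by the $A$-module multiplication $\mu_{j,j+1}\colon A_{j,j+1}\otimes\cL_{-j-1}\to\cL_{-j}$, while $\phi_2$ is $\rho$ tensored with $\cL_{-j-2}$, followed by $1\otimes\mu_{j+1,j+2}$.

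Condition~(a) is then immediate: unwinding Definition~\ref{def:prehelix}(a) and comparing with the construction of $\delta$ in Lemma~\ref{lemma.factorViaUnit} for $\phi=\phi_1$, the map $\mu_j$ is, under the canonical isomorphisms, precisely the morphism $A_{j,j+1}\to(\cL_{-j-1},\cL_{-j})$ adjoint to $\mu_{j,j+1}$. Since $\mu_{j,j+1}$ is a morphism of $D_j$-objects and $A_{j,j+1}$ is a $(D_j,D_{j+1})$-bimodule, its adjoint is $(D_j,D_{j+1})$-bilinear, hence in particular left $D_j$-linear, as required. For condition~(b), applying $(\cL_{-j-2},-)$ to the description of $\phi_2$ above and using the canonical identifications exhibits the induced map ${}^!A^{*}_{j+2,j}\to{}^!A^{*}_{j+1,j}\otimes(\cL_{-j-2},\cL_{-j-1})$ as the composite of $\rho\colon{}^!A^{*}_{j+2,j}\to{}^!A^{*}_{j+1,j}\otimes A_{j+1,j+2}$ with $1\otimes\mu_{j+1,j+2}$; by condition~(a) the map induced by $\mu_{j+1,j+2}$ is $\mu_{j+1}$, so this is exactly the factorisation demanded in Definition~\ref{def:prehelix}(b).

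The only step requiring genuine attention — and hence the closest thing here to a main obstacle — is the assertion that Proposition~\ref{prop.KoszulEndTerms} carries over from the tautological module $\cL_{-j}=e_jA$ to an arbitrary $A$-module $\underline{\cL}$, i.e.\ that the first two differentials of ${e_j}{}^!A^{*}\otimes\underline{\cL}$ are given by multiplication. This follows directly from the definition of the Koszul differential in Definition~\ref{def:KoszulComplex} and the associativity of module multiplication, along exactly the lines of the proof of Lemma~\ref{lemma.KjIsComplex}; once it is in hand the proposition is pure bookkeeping.
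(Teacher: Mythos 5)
Your proof is correct and takes essentially the same approach as the paper's (which simply cites Lemma~\ref{lemma.factorViaUnit} and Proposition~\ref{prop.KoszulEndTerms} after noting only property~(b) needs checking); you spell out in more detail the verification of both (a) and (b) and the observation that the identification of the first two Koszul differentials with module multiplication does not depend on the tautological module $e_jA$, which is the right thing to notice.
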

\begin{proof}
We need only check that the Koszul complexes satisfy property (b) in Definition~\ref{def:prehelix}. This follows from Lemma~\ref{lemma.factorViaUnit} and Proposition~\ref{prop.KoszulEndTerms}.
\end{proof}

Given a pre-helix $\underline{\cL}$ in an additive category {\sf C}, we defined in \cite{morph}, the quadratic algebra $\mathbb{S}^{nc}(\underline{\mathcal{L}})$ as the quadratic part of $\End({\underline{\mathcal{L}}})$, that is, the unique quadratic algebra coinciding with $\End({\underline{\mathcal{L}}})$ in degrees one and two, and having the same relations in degree 2.

\begin{proposition}
Let $\underline{\cL}$ be a pre-helix over $A$ and let $\psi: A \longrightarrow \End(\underline{\mathcal{L}})$ be the map from Proposition \ref{prop.MapToEndo}. Then $\psi$ lifts uniquely to an algebra morphism $\tilde{\psi} \colon A \longrightarrow \mathbb{S}^{nc}(\underline{\mathcal{L}})$. Furthermore, $\tilde{\psi}$ is an isomorphism if and only if $\underline{\cL}$ is complete and $\psi$ is an injection in degree $2$.
\end{proposition}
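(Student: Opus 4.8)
The plan is to reduce the entire statement to the behaviour of $\psi$ and $\tilde\psi$ in degrees one and two, using the elementary fact that a quadratic algebra is recovered from its degree-one generators together with its degree-two relations.

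First I would construct $\tilde\psi$ and check uniqueness. Write $A = T(\underline V)/\langle I_2\rangle$ with $I_2$ the space of degree-two relations of $A$, and $\mathbb{S}^{nc}(\underline{\cL}) = T(\underline W)/\langle J_2\rangle$, where $\underline W$ is the degree-one part of $\End(\underline{\cL})$ and $J_{2,j} = \ker\big(W_j \otimes W_{j+1} \to \End(\underline{\cL})_{j,j+2}\big)$ is its space of degree-two relations. There is a canonical algebra map $\iota \colon \mathbb{S}^{nc}(\underline{\cL}) \to \End(\underline{\cL})$ which is an isomorphism in degrees zero and one and is injective in degree two. Since $\psi$ is an algebra morphism that kills $I_2$, a chase through the multiplication square shows that its degree-one component $\psi_1 \colon \underline V \to \underline W$ satisfies $\psi_1^{\otimes 2}(I_2) \subseteq J_2$, so $\psi_1$ induces an algebra morphism $\tilde\psi \colon A \to \mathbb{S}^{nc}(\underline{\cL})$ with $\iota \circ \tilde\psi = \psi$. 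Any algebra morphism lifting $\psi$ must have degree-one component equal to $\psi_1$, because $\iota$ is an isomorphism in degree one; as $A$ is generated in degree one, it is then determined by this component, giving uniqueness.

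For the equivalence I would first record two observations. Under the identification $\mathbb{S}^{nc}(\underline{\cL})_{j,j+1} \cong \End(\underline{\cL})_{j,j+1}$, the map $\tilde\psi_1$ is precisely the family of multiplication maps $\mu_j$ of Definition~\ref{def:prehelix}(a); hence, by Proposition~\ref{prop.MapToEndo}, $\underline{\cL}$ is complete if and only if $\tilde\psi_1$ is an isomorphism. And since $\iota$ is injective in degree two, $\psi_2 = \iota_2 \circ \tilde\psi_2$ is injective if and only if $\tilde\psi_2$ is. Now suppose $\underline{\cL}$ is complete and $\psi$ is injective in degree two. Then $\tilde\psi_1$, hence $\tilde\psi_1^{\otimes 2}$, is an isomorphism; since $A$ and $\mathbb{S}^{nc}(\underline{\cL})$ are generated in degree one, the multiplication maps onto their degree-two parts are surjective, so $\tilde\psi_2$ is surjective, and it is injective by the second observation, hence an isomorphism. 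Therefore $\tilde\psi_1^{\otimes 2}$ carries $I_2$ bijectively onto $J_2$, so $\tilde\psi_1^{-1}$ induces an algebra morphism $\mathbb{S}^{nc}(\underline{\cL}) \to A$ which is inverse to $\tilde\psi$ on generators, hence everywhere. Conversely, if $\tilde\psi$ is an isomorphism then $\tilde\psi_1$ is an isomorphism, so $\underline{\cL}$ is complete, and $\tilde\psi_2$ is an isomorphism, so $\psi_2 = \iota_2 \circ \tilde\psi_2$ is injective.

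The only real obstacle is the final reduction — that a morphism of quadratic algebras which is an isomorphism in degrees one and two is an isomorphism — and the care needed to phrase it correctly: because $\End(\underline{\cL})$ need not be generated in degree one, the map $\iota$ is generally not injective in high degrees, so one must argue via $\tilde\psi$ and the low-degree components of $\iota$ rather than treating $\mathbb{S}^{nc}(\underline{\cL})$ as a subalgebra of $\End(\underline{\cL})$. The remaining steps are routine once one has identified the degree-one part of $\psi$ with the family $(\mu_j)$ and unwound the definitions of the quadratic duals and relations.
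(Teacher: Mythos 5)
Your proposal is correct and follows essentially the same route as the paper: lift $\psi$ through the quadratic quotient because the relations of $A$ are quadratic, identify completeness with $\tilde{\psi}$ being an isomorphism in degree one, use injectivity of the canonical map $\mathbb{S}^{nc}(\underline{\mathcal{L}}) \to \End(\underline{\mathcal{L}})$ in degree two, and conclude via the standard fact that a morphism of quadratic algebras which is an isomorphism in degrees zero, one and two is an isomorphism. You merely spell out in detail the steps the paper compresses (the transport of relations under $\tilde{\psi}_1^{\otimes 2}$, and the uniqueness of the lift), which is consistent with, not different from, the paper's argument.
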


\begin{proof}
The morphism $\psi$ lifts to $\bS^{nc}(\cL)$ since the relations in $A$ are all quadratic.  If $\tilde{\psi}$ is an isomorphism, then it is, in particular, in isomorphism in degree one. Hence $\underline{\cL}$ is complete. In addition, since the canonical map $\mathbb{S}^{nc}(\underline{\mathcal{L}}) \longrightarrow \End(\underline{\mathcal{L}})$ is an injection in degree two by definition of $\mathbb{S}^{nc}(\underline{\mathcal{L}})$, $\psi$ is an inclusion in degree two.

Conversely, suppose $\underline{\cL}$ is complete and $\psi$ is an injection in degree two.  Since $A$ is quadratic, it factors through $\mathbb{S}^{nc}(\underline{\mathcal{L}})$ via $\tilde{\psi} \colon A \longrightarrow \mathbb{S}^{nc}(\underline{\mathcal{L}})$.  In particular, $\tilde{\psi}$ is a map of quadratic algebras which is an isomorphism in degree zero, one and two, whence the result.
\end{proof}

\begin{remark}
Suppose $\underline{\cL}$ is a helix in the sense of \cite{morph}. Let $A=\mathbb{S}^{nc}(\operatorname{Hom}(\mathcal{L}_{-1}, \mathcal{L}_{0}))$ which we showed in \cite[Proposition~3.6(2)]{morph} to be isomorphic to $\bS^{nc}(\underline{\cL})$. Furthermore, \cite[Corollary~3.7]{morph} gives a morphism of $\bZ$-indexed algebras $\psi\colon A \to \End(\underline{\cL})$ so $\underline{\cL}$ is an $A$-module. Proposition~\ref{prop:AmodGivesPreHelix} shows that $\underline{\cL}$ is actually a pre-helix over $A$ when furnished with the Koszul complexes.
We studied the map $\psi$ in detail in the case that $A$ is the noncommutative symmetric algebra of a finite dimensional vector space over $k=\mathbb{C}$, ${\sf C}$ is the category of quasi-coherent sheaves over a smooth elliptic curve over $k$ and $\underline{\mathcal{L}}$ is an elliptic helix of period two determined by the line bundles $\cL_{-1}, \cL_0$ which satisfy $\deg \cL_0 > \cL_{-1} + 1$. In fact, we showed \cite[Corollary 5.8]{morph} that ${\sf Proj}\ \End(\underline{\cL})$ is in some precise sense a double cover of the noncommutative projective line ${\sf Proj}\ A$. One of the goals of this paper is to give period 3 analogues of this to embed noncommutative elliptic curves into noncommutative projective planes.
\end{remark}

\section{Helices and geometry} \label{sec.geom}

In this section, we introduce our notion of a helix which generalises the notion of Euler exact sequences on projective spaces $\bP^n$ as well as their pullbacks via morphisms $f \colon X \to \bP^n$.  The notion should be considered a noncommutative analogue of linear systems and, in fact, they will induce noncommutative morphisms to noncommutative projective spaces.


\begin{definition}  \label{def:helix}
Let $A$ denote a quadratic $D$-algebra and let $\underline{\mathcal{L}}$ be a pre-helix over $A$ in an abelian category {\sf C}. We say that
\begin{itemize}
\item{} $\underline{\mathcal{L}}$ is a {\it helix of length $2$ over $A$} if for all $j \in \mathbb{Z}$, the pre-helix complexes given as in (\ref{eqn.PreHelix}) extend to exact sequences
\begin{equation} \label{eqn.littleeuler}
0 {\longrightarrow} \,^!A^{*}_{j+2,j} \otimes \mathcal{L}_{-j-2} \overset{\phi_2}{\longrightarrow}
\,^!A^{*}_{j+1,j} \otimes \mathcal{L}_{-j-1} \overset{\phi_1}{\longrightarrow} \,^!A^{*}_{j,j}\otimes \mathcal{L}_{-j} \longrightarrow 0,
\end{equation}
and
\item{} $\underline{\mathcal{L}}$ is a {\it helix of length $n$ over $A$}, where $n \geq 3$, if for all $j \in \mathbb{Z}$, and for $3 \leq i \leq n$, there exist finite-dimensional right $D_{j+i}$-modules $V_{j+i,j}$ such that the pre-helix complexes given as in (\ref{eqn.PreHelix}) extend to exact sequences as follows:
\begin{equation} \label{eqn.bigeuler}
0 \longrightarrow V_{j+n,j} \otimes \mathcal{L}_{-j-n} \longrightarrow \cdots \longrightarrow V_{j+3,j} \otimes \mathcal{L}_{-j-3} \longrightarrow
\end{equation}
\begin{equation*}
\,^!A^{*}_{j+2,j} \otimes \mathcal{L}_{-j-2} \overset{\phi_2}{\longrightarrow}
\,^!A^{*}_{j+1,j} \otimes \mathcal{L}_{-j-1} \overset{\phi_1}{\longrightarrow} \,^!A^{*}_{j,j}\otimes \mathcal{L}_{-j} \longrightarrow 0.
\end{equation*}
Abusing terminology, we will also refer to these exact sequences as helices. The helix is {\it complete} if the pre-helix is.
\end{itemize}
\end{definition}

\begin{example}  \label{eg:classicalEuler}
Let $V$ be an $n+1$-dimensional vector space over a field $k$ and $S(V)$ be the symmetric algebra and $A$ the $\bZ$-indexed algebra associated to $S(V)$. Its Koszul dual is the indexed algebra associated to the classical Koszul dual $S(V)^! = \bigwedge(V^*)$ which is just the exterior algebra. Then $\cL := (\cO(i))$ is a helix of length $n$ whose exact sequences are the Euler exact sequences $K_j:=\oplus_i \bigwedge^iV \otimes \cO(j-i)$. More generally, if $X$ is a projective scheme and $f \colon X \to \bP^n$ a morphism, then $(f^* \cO(i))$ is a helix of length $n$ in $\Coh(X)$.
\end{example}

\begin{example}  \label{eg:perverseNCmorphism}
Morphisms of noncommutative spaces are usually defined as a pair of adjoint functors mimicking $f^*,f_*$. Unfortunately, this definition does not recover the commutative definition when those spaces are just commutative varieties. To take a simple example, consider a pair of adjoint functors
$$f^* \colon \Coh(\bP^1) \to \Coh(\Spec k), \ f_* \colon \Coh(\Spec k) \to \Coh(\bP^1),$$
and suppose $\cG := f_* k \in \Coh(\bP^1)$.  Then, for given $\cF \in \Coh(\bP^1)$, we have
$$(f^* \cF)^* = \Hom_k(f^*\cF,k) = \Hom_{\bP^1}(\cF, f_* k) = \Ext^1_{\bP^1}(\cG(2),\cF)^*.$$
The commutative morphisms $f \colon \Spec k \to \bP^1$ correspond to $\cG$ being a skyscraper sheaf, but there are many other possibilities. Even if one imposes the condition that $f^*$ preserves structure sheaves, that still leaves the possibility that $\cG = \cO_{\bP^1}$ which is realized by an adjoint pair with $f^* = H^1(\bP^1,(-)(-2))$. However, $f^{*}$ does not preserve exactness of the (twist of the) Euler sequence
$$
0 \to \cO \to \cO(1)^{\oplus 2} \to \cO(2) \to 0
$$
which suggests that a helix type condition is a useful hypothesis to impose on morphisms of noncommutative spaces.
\end{example}

Throughout the remainder of this section, {\it we assume $\underline{\mathcal{L}}$ is a complete helix of length $n$ over a quadratic algebra $A$, with corresponding exact sequences being given by those in Definition \ref{def:helix}}. We wish to show that helices with good homological properties define noncommutative morphisms to noncommutative projective spaces. We need a preliminary result.

\begin{lemma} \label{lemma.lessurj}
Let ${\sf C}$ be an abelian category and suppose
\begin{equation} \label{eqn.les}
\cM_* \colon  0 \longrightarrow \mathcal{M}_{n} \longrightarrow \cdots \longrightarrow \mathcal{M}_{1} \longrightarrow \mathcal{M}_{0} \longrightarrow 0
\end{equation}
is an exact sequence in ${\sf C}$.  Suppose there exists $X \in {\sf C}$ such that $^p(X,\cM_i) = 0$ for all $p>0$ and $i = 2, \ldots, n$. Then the complex $(X,\cM_*)$ is exact.
\end{lemma}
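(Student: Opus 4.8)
The plan is to cut the long exact sequence $\cM_*$ into short exact sequences, apply the left-exact functor $(X,-):=\Hom_{\sf C}(X,-)$ to each, and use the $\Ext$-vanishing hypothesis to see that left-exactness already suffices to recover exactness of $(X,\cM_*)$.

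First I would set up the bookkeeping of cycles. Writing $d_i\colon\cM_i\to\cM_{i-1}$ for the differentials, let $K_i:=\ker d_i$ for $1\le i\le n-1$ and, for uniformity, set $K_0:=\cM_0$ and $K_{n-1}:=\cM_n$ (the last identification being legitimate because $d_n$ is injective, and $K_0=\cM_0$ because $d_1$ is surjective, by exactness of $\cM_*$). Exactness of $\cM_*$ is then equivalent to the statement that for each $i$ with $1\le i\le n-1$ there is a short exact sequence $0\to K_i\to\cM_i\xrightarrow{\bar d_i}K_{i-1}\to 0$, and that $d_i$ equals the composite of $\bar d_i$ with the monomorphism $K_{i-1}\hookrightarrow\cM_{i-1}$; for $i=n$, $d_n$ is the isomorphism $\cM_n\xrightarrow{\sim}K_{n-1}$ followed by $K_{n-1}\hookrightarrow\cM_{n-1}$.

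Second, I would prove by descending induction on $i$ that $^p(X,K_i)=0$ for all $p\ge 1$ and all $1\le i\le n-1$. The base case $i=n-1$ is precisely the hypothesis applied to $K_{n-1}=\cM_n$. For the inductive step, assuming $^p(X,K_{i+1})=0$ for all $p\ge 1$, apply $(X,-)$ to $0\to K_{i+1}\to\cM_{i+1}\to K_i\to 0$ and read off the $\Ext$ long exact sequence; since $2\le i+1\le n$, the hypothesis kills $^p(X,\cM_{i+1})$ for $p\ge 1$, and the inductive hypothesis kills $^{p+1}(X,K_{i+1})$, forcing $^p(X,K_i)=0$ for $p\ge 1$. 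This is exactly where the hypothesis is needed for all positive $p$, not merely for $p=1$: the induction climbs the $\Ext$-ladder. (Note that no hypothesis on $\cM_0$ or $\cM_1$ is used.)

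Finally, with $^1(X,K_i)=0$ for $1\le i\le n-1$ in hand, applying $(X,-)$ to the short exact sequences above produces short exact sequences $0\to(X,K_i)\to(X,\cM_i)\to(X,K_{i-1})\to 0$. Splicing these along the factorization of $(X,d_i)$ through the surjection $(X,\cM_i)\twoheadrightarrow(X,K_{i-1})$ followed by the injection $(X,K_{i-1})\hookrightarrow(X,\cM_{i-1})$ yields, at each spot, $\ker(X,d_i)=(X,K_i)=\operatorname{im}(X,d_{i+1})$ for $1\le i\le n-1$, together with surjectivity of $(X,d_1)$ and injectivity of $(X,d_n)$; this is precisely the assertion that $(X,\cM_*)$ is exact. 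The degenerate cases $n\le 1$, where the hypothesis is vacuous, should be disposed of by inspection at the outset. The only genuine subtlety is the one flagged in the induction step above; everything else is the routine ``break a long exact sequence into short exact sequences'' manoeuvre.
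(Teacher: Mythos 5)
Your proposal is correct and takes essentially the same approach as the paper: break the long exact sequence into short exact sequences via the syzygies (your $K_i$ are the paper's $\Omega_j$), run descending induction on the index to show $\Ext^p(X, K_i) = 0$ for all $p > 0$, and then splice. The paper's version is more terse but identical in substance.
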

\begin{proof}
Let $\Omega_j < \cM_j$ be the syzygies in $\cM_*$ so that there are exact sequences of the form
\begin{equation}  \label{eq:syzygy}
0 \to \Omega_j \to \cM_j \to \Omega_{j-1} \to 0
\end{equation}
for $j = 1, \ldots , n-1$ and $\Omega_{n-1} = \cM_n$. Hence $^p(X, \Omega_{n-1}) = 0$ for all $p>0$ and downward induction on $j$ using (\ref{eq:syzygy}) shows $^p(X, \Omega_j) = 0$ for all $j$. Then $(X, -)$ preserves exactness of the sequences (\ref{eq:syzygy}) and hence of $\cM_*$.
\end{proof}

Below, given a $\bZ$-indexed algebra $C$, we let $C_{\geq 0}$ be the positively graded subalgebra with $(i,j)$-th component $C_{ij}$ for all $i \leq j$.

\begin{proposition} \label{prop.fg}
Let $\underline{\cL}$ be a complete helix of length $n$ over $A$. Suppose there exists an $m \geq 0$ such that for all $l \geq m$, $^{j}(\mathcal{L}_{i}, \mathcal{L}_{i+l})=0$ for all $j >0$ and for all $i \in \bZ$. Then $B:= \End(\underline{\cL})_{\geq 0}$ satisfies the following:
\begin{enumerate}
\item The start of the Koszul complex for the $A$-module $B$ is exact for all $v \geq m+n$,
$$
^!A_{i+2,i}^* \otimes B_{i+2,i+v} \to A_{i,i+1} \otimes B_{i+1,i+v} \xto{\mu} B_{i,i+v}  \to 0
$$
(recall $\mu$ here is module multiplication),
\item{} for all $i\in \bZ$, the right $B$-module $(e_{i}B)_{> i}$ is generated by
$$
B_{i,i+1}, \ldots, B_{i,i+m+n-1},
$$

\item{} for all $i$ we have the following left $D$-module decomposition
$$Be_i =  AB_{i-m-n+1,i}\oplus B_{i-m-n+2,i} \oplus \ldots \oplus  B_{ii}.$$
\end{enumerate}
\end{proposition}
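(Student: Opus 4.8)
The plan is to prove the three statements more or less in order, using the helix exact sequences together with the vanishing hypothesis to feed Lemma~\ref{lemma.lessurj}, and then to bootstrap from (1) to (2) and (3) by induction on the degree.

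First I would establish (1). Fix $i\in\bZ$ and $v\ge m+n$. The object $\cL_{-i-v}$ applied to the degree-$i$ helix exact sequence (\ref{eqn.bigeuler}) gives, after applying $\Hom_{\sf C}(-,\cL_{-i-v})$ — no wait, one applies $\Hom(\cL_{-(i+v)},-)$; with our conventions $B_{i,i+v}=\Hom_{\sf C}(\cL_{-i-v},\cL_{-i})$, so we apply the functor $\Hom_{\sf C}(\cL_{-i-v},-)$ to the exact sequence (\ref{eqn.bigeuler}) for the index $j=i$. The terms of that sequence are $V_{i+p,i}\otimes\cL_{-i-p}$ for $0\le p\le n$ (with $V_{i,i}={}^!A^*_{i,i}=D_i$, $V_{i+1,i}={}^!A^*_{i+1,i}$, $V_{i+2,i}={}^!A^*_{i+2,i}$). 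Applying $\Hom_{\sf C}(\cL_{-i-v},-)$ produces
$$
{}^j(\cL_{-i-v},V_{i+p,i}\otimes\cL_{-i-p})\;\cong\;V_{i+p,i}\otimes {}^j(\cL_{-i-v},\cL_{-i-p})\;\cong\;V_{i+p,i}\otimes\,^j\!(\cL_{i+p},\cL_{i+v}),
$$
and for $p\le n$ and $v\ge m+n$ we have $v-p\ge m$, so by hypothesis the higher Ext groups vanish for the middle and right-hand terms $p=2,\dots,n$. Hence Lemma~\ref{lemma.lessurj} applies (with $X=\cL_{-i-v}$) and the complex $\Hom_{\sf C}(\cL_{-i-v},(\ref{eqn.bigeuler}))$ is exact; in particular its right-hand end
$$
{}^!A^*_{i+2,i}\otimes B_{i+2,i+v}\to A_{i,i+1}\otimes B_{i+1,i+v}\xrightarrow{\ \mu\ }B_{i,i+v}\to 0
$$
is exact, using Remark~\ref{rem:preIsKoszul} (or Proposition~\ref{prop.KoszulEndTerms}) to identify the maps as the Koszul differentials, i.e. as module multiplication $\mu$. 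This is exactly (1).

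Statement (2) now follows from (1) by downward—actually upward—induction on $v$. For $v\le m+n-1$ there is nothing to prove: $B_{i,i+v}$ is among the listed generators. For $v\ge m+n$, surjectivity of $\mu\colon A_{i,i+1}\otimes B_{i+1,i+v}\to B_{i,i+v}$ from (1), together with $A_{i,i+1}=B_{i,i+1}$ (as $A$ and $B$ agree in degree one, since $A=\bS^{nc}(\underline\cL)$ — but here $A$ is only assumed quadratic with $\underline\cL$ a complete helix over it, so by Proposition~\ref{prop.MapToEndo} the map $A\to\End(\underline\cL)$ is an isomorphism in degree one), shows that $B_{i,i+v}$ is contained in $B_{i,i+1}\cdot B_{i+1,i+v}$, and the inductive hypothesis applied at $i+1$ writes $B_{i+1,i+v}$ in terms of $B_{i+1,i+1+1},\dots,B_{i+1,i+m+n-1}$, i.e. in the image of multiplication by $B_{i+1,*}$ for degrees $\le m+n-1$. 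Iterating, $(e_iB)_{>i}$ is generated as a right $B$-module by $B_{i,i+1},\dots,B_{i,i+m+n-1}$, which is (2). Statement (3) is a reformulation: write $Be_i=\bigoplus_{r\le i}B_{r,i}$; the summands with $i-r\le m+n-2$, i.e. $B_{i-m-n+2,i}\oplus\cdots\oplus B_{ii}$, are taken as-is, while the claim that the remaining part $\bigoplus_{r\le i-m-n+1}B_{r,i}$ equals $A\cdot B_{i-m-n+1,i}$ is precisely the statement that for every $r\le i-m-n+1$, $B_{r,i}$ lies in the image of $A_{r,\,i-m-n+1}\otimes B_{i-m-n+1,i}\to B_{r,i}$; and since $A$ is generated in degree one this reduces by the same induction, using surjectivity of $\mu$ in (1) in each step (each step lowers the right index of the $B$-factor by going from degree $\ge m+n$ down by one), to the generation statement (2) read on the left. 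I would spell out this last translation carefully since the indexing is the fiddly part.

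The main obstacle I anticipate is purely bookkeeping with the indexing conventions: keeping straight that $B_{i,i+v}=\Hom(\cL_{-i-v},\cL_{-i})$, that applying $\Hom_{\sf C}(\cL_{-i-v},-)$ to the degree-$i$ helix turns the $p$-th term into something with Ext-degree "$v-p$", and that one needs $v-p\ge m$ for all $p=2,\dots,n$, i.e. $v\ge m+n$, which is exactly the stated bound. The only genuine input beyond this is the identification — via Remark~\ref{rem:preIsKoszul} and Proposition~\ref{prop.KoszulEndTerms} — of the tail of the $\Hom$-ed helix with the start of the Koszul complex of the $A$-module $B$, so that the resulting maps are literally the module multiplications $\mu$; this is what licenses phrasing the exactness in (1) in terms of $\mu$ and then using it as a generation statement in (2) and (3). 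No deep new idea is needed; the argument is a diagram-chase plus an induction on degree.
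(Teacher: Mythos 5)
Your proposal is correct and follows essentially the same route as the paper: apply $\Hom_{\sf C}(\cL_{-j-v},-)$ to the helix exact sequence, invoke Lemma~\ref{lemma.lessurj} with the Ext-vanishing hypothesis (observing $v-p\ge m$ for $p=2,\dots,n$ when $v\ge m+n$), identify the tail via Remark~\ref{rem:preIsKoszul} as the start of the Koszul complex for $B$, and then deduce (2) and (3) from surjectivity of $\mu$. One cosmetic slip: the rewriting ${}^j(\cL_{-i-v},\cL_{-i-p})\cong{}^j(\cL_{i+p},\cL_{i+v})$ is not an identity (these are in general Ext groups of unrelated objects); what you actually use, and what is correct, is that the index gap $(-i-p)-(-i-v)=v-p\ge m$ so the hypothesis applies directly.
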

\begin{proof}
We need only prove part~(1) since parts (2) and (3) then follow from surjectivity of $\mu$ in part~(1). By hypothesis
$$
{}^{p}(\mathcal{L}_{-j-v}, \mathcal{L}_{-j-n})=\cdots={}^{p}(\mathcal{L}_{-j-v},\mathcal{L}_{-j-2})=0
$$
holds for all $p>0$ and $v \geq m+n$. The result thus follows from Lemma~\ref{lemma.lessurj} on applying $(\mathcal{L}_{-j-v},-)$ to (\ref{eqn.littleeuler}) or (\ref{eqn.bigeuler}), and noting that this sends the Koszul complex for $\underline{\cL}$ to the Koszul complex for $B$.
%
\end{proof}

Before we use Proposition \ref{prop.fg} to obtain a generalization of \cite[Theorem 4.2]{morph} (Theorem \ref{theorem.morph}), we recall some terminology from \cite{morph}.  Suppose $C$ is a positively graded, connected $\mathbb{Z}$-indexed algebra.  We let ${\sf Gr }C$ denote the category of graded right $C$-modules.  A graded right $C$-module $M$ is {\it right bounded} if $M_{n} = 0$ for all $n >> 0$.  We let ${\sf Tors }C$ denote the full subcategory of ${\sf Gr }C$ consisting of modules whose elements $m$ have the property that the right $C$-module generated by $m$ is right bounded.  If ${\sf Tors }C$ is a localizing subcategory (or even just a Serre subcategory) of ${\sf Gr }C$, then we may form the quotient ${\sf Gr }C/{\sf Tors }C =: {\sf Proj }C$.  We let $\pi_{C}:{\sf Gr }C \rightarrow {\sf Proj }C$ denote the quotient functor, and sometimes write $\pi$ instead of $\pi_{C}$.

\begin{theorem} \label{theorem.morph}
Let $A$ be a quadratic $D$-algebra and $\underline{\cL}$ be a complete helix of length $n$  over $A$ in an abelian category {\sf C}. Let $B = \End(\underline{\cL})_{\geq 0}$. Suppose furthermore that
\begin{itemize}

\item{} there exists an $m \geq 0$ such that for all $l \geq m$ and all $j>0$ ${}^{j}(\mathcal{L}_{i},\mathcal{L}_{i+l})=0$, and

\item{} for all $i \in \mathbb{Z}$, $B_{ij}$ is finite-dimensional over $D_j = B_{jj}$ for $i+1 \leq j \leq i+n+m-1$.
\end{itemize}
Then
\begin{enumerate}
\item{} $B$ is connected,

\item{} ${\sf Tors }B$ is a localizing subcategory of ${\sf Gr }B$, and

\item{} the restriction functor ${\sf Gr }B \longrightarrow {\sf Gr }A$ induces a functor
$$
{\sf Proj }B \longrightarrow {\sf Proj }A.
$$
\end{enumerate}

\end{theorem}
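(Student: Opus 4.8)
The plan is to verify the three claims in order, drawing on Proposition~\ref{prop.fg} for the structural input on $B$ and on the general machinery of noncommutative projective geometry for the quotient category statements. Claim (1), that $B$ is connected, is essentially immediate: by Definition~\ref{def:prehelix}(1) the natural maps $D_{-i} \to \End_{\sf C}\cL_i$ are isomorphisms, so $B_{ii} = \Hom_{\sf C}(\cL_{-i},\cL_{-i}) = D_i$, and positivity $B_{ij}=0$ for $i>j$ holds by construction of $\End(\underline{\cL})_{\geq 0}$. So I would dispatch (1) in a sentence.

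For Claim (2), that ${\sf Tors}\,B$ is a localizing subcategory of ${\sf Gr}\,B$, the strategy is to show $B$ is \emph{right coherent-ish} enough that the torsion subcategory is well-behaved; the cleanest route is to establish that $e_iB$ is a \emph{finitely generated} graded right $B$-module generated in bounded degrees and that each $B_{ij}$ is finite-dimensional on the right. The first of these is exactly Proposition~\ref{prop.fg}(2): $(e_iB)_{>i}$ is generated by $B_{i,i+1},\dots,B_{i,i+m+n-1}$. The finite-dimensionality hypothesis in the theorem covers the generating range, and then finite-dimensionality of all $B_{ij}$ propagates from the generation statement. With $B$ connected, locally finite, and with each $e_iB$ finitely generated, ${\sf Tors}\,B$ is closed under subobjects, quotients, extensions and arbitrary direct sums inside ${\sf Gr}\,B$, hence is a localizing subcategory; this is the standard argument (as used for graded rings, cf. the treatment in \cite{morph}), and I would cite it rather than reprove it, checking only that the indexed-algebra hypotheses we have in hand are exactly the ones that argument needs.

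For Claim (3), the point is that the restriction functor $\operatorname{res}\colon {\sf Gr}\,B \to {\sf Gr}\,A$ — well-defined because the helix structure makes $\underline{\cL}$ an $A$-module and hence (Proposition~\ref{prop.MapToEndo}) gives an algebra map $A \to \End(\underline{\cL})$, through which every $B$-module becomes an $A$-module — carries ${\sf Tors}\,B$ into ${\sf Tors}\,A$, so it descends to the quotient categories. The key computation is that $\operatorname{res}(M)$ is $A$-torsion whenever $M$ is $B$-torsion. This is where Proposition~\ref{prop.fg}(3) does the work: the decomposition $Be_i = AB_{i-m-n+1,i}\oplus B_{i-m-n+2,i}\oplus\cdots\oplus B_{ii}$ says that, modulo a bounded piece, $B$ is generated as a left module over itself by $A$-multiples, i.e. the $A$-submodule generated by any homogeneous $b\in e_iB$ agrees in high degrees with the $B$-submodule it generates. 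Hence an element of $M$ that generates a right-bounded $B$-submodule also generates a right-bounded $A$-submodule, so $\operatorname{res}(M)\in{\sf Tors}\,A$. Therefore $\operatorname{res}$ descends to $\pi_A \circ \operatorname{res}$ factoring through $\pi_B$, yielding the functor ${\sf Proj}\,B \to {\sf Proj}\,A$.

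The main obstacle I anticipate is not (1) or (3), which follow fairly mechanically once Proposition~\ref{prop.fg} is in hand, but rather making Claim (2) airtight in the $\bZ$-indexed bimodule-algebra setting: the assertion that ${\sf Tors}\,B$ is localizing (not merely Serre) requires closure under arbitrary direct sums, and this typically needs a finite-generation or coherence input on $B$ that is slightly stronger than finite-dimensionality of individual components. I would handle this by using Proposition~\ref{prop.fg}(2) to reduce any $B$-torsion module to a directed union of finitely generated torsion submodules, each of which is right bounded, and then note that a direct sum (equivalently directed colimit) of right-bounded-in-each-fixed-degree modules is still torsion — the subtlety being to confirm that "torsion" in the sense of ${\sf Tors}\,B$ (every element generates a right-bounded submodule) is genuinely closed under the relevant colimits given only the local finiteness we have assumed. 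Once that bookkeeping is pinned down, the rest is routine.
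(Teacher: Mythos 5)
Parts (1) and (2) of your proposal match the paper's own proof: part (1) is dispatched exactly as you say, and for part (2) the paper likewise reduces, via the proof of \cite[Lemma~3.5]{morinyman}, to showing that $(e_iB)_{>i}$ is a finitely generated right $B$-module, which is Proposition~\ref{prop.fg}(2) combined with the finite-dimensionality hypothesis on $B_{ij}$ for $i+1\le j\le i+n+m-1$ (and, as you note, local finiteness of the remaining components then propagates). The closure-under-colimits bookkeeping you worry about is exactly what the cited lemma supplies, so no new argument is required there.

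For part (3) there are two issues. First, your appeal to Proposition~\ref{prop.fg}(3) is misdirected: restriction preserves torsion for the trivial reason that the $A$-action on a graded right $B$-module is through $\psi\colon A\to B$, so for $m\in M$ the $A$-submodule $m\psi(A)$ is contained in the right-bounded $B$-submodule $mB$. Proposition~\ref{prop.fg}(3) is a statement about $Be_i$ as a \emph{left} $A$-module, and in the paper it is used for the opposite direction — showing that $-\,\underline{\otimes}_A B$ sends torsion to torsion in Theorem~\ref{thm:adjoint} — while your gloss that ``the $A$-submodule generated by any homogeneous $b\in e_iB$ agrees in high degrees with the $B$-submodule it generates'' is not what that proposition asserts. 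This detour does not invalidate the conclusion, but the reasoning as written is off. Second, and this is the genuine gap: you never verify that ${\sf Tors}\,A$ is a Serre (indeed localizing) subcategory of ${\sf Gr}\,A$. Without this, ${\sf Proj}\,A$ is not even defined and the exact functor $\pi_A\circ\operatorname{res}$ has nothing to factor through; and it is not automatic for $\bZ$-indexed algebras — it is precisely the kind of statement you had to establish for $B$ in part (2). The paper closes this in one line: since $A$ is quadratic it is locally finite and generated in degree one, so $(e_iA)_{>i}$ is a finitely generated right $A$-module for every $i$, and the same criterion from \cite[Lemma~3.5]{morinyman} shows ${\sf Tors}\,A$ is localizing. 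Add that observation, replace the Proposition~\ref{prop.fg}(3) argument by the containment $m\psi(A)\subseteq mB$, and your proof is complete and coincides with the paper's.
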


\begin{proof}
Part~(1) is clear. For part~(2), it suffices by the proof of \cite[Lemma~3.5]{morinyman}, to show that $(e_iB)_{>i}$ is a finitely generated $B$-module for all $i$. This follows by
Proposition~\ref{prop.fg} and our assumptions. Since restriction preserves torsion, part~(3) will follow if we can show ${\sf Tors}\, A$ is localizing. In this case, we see that for all $i$, $(e_iA)_{>i}$ is finitely generated since $A$ is generated in degree one.
\end{proof}

\begin{corollary}  \label{cor:helixOnB}
Let $\underline{\cL}$ be a complete helix of length $n$ over $A$ satisfying the hypotheses of Theorem~\ref{theorem.morph}. Let $B = \End(\underline{\cL})_{\geq 0}$. Then $(\pi e_{-j} B)_{j \in \bZ}$ is a helix of length $n$ over $A$ in ${\sf Proj}\, B$ whose helix structure is obtained by applying $(\underline{\cL},?)$ to the helices (\ref{eqn.littleeuler}) or (\ref{eqn.bigeuler}).
\end{corollary}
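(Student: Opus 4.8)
The plan is to apply the functors $(\underline{\cL}, ?) := \Hom_{\sf C}(\underline{\cL}, ?)$, suitably interpreted as a $B$-valued Hom, to the defining exact sequences of the helix on $\underline{\cL}$ in {\sf C}, and check that what results is a helix of length $n$ over $A$ in ${\sf Proj}\, B$. First I would recall that for each $j$, applying $(\cL_{-i}, ?)$ for $i \in \bZ$ to the exact sequence (\ref{eqn.bigeuler}) (or (\ref{eqn.littleeuler})) for the $j$-th helix produces, for each $i$, a complex of right $D$-modules; assembling these over all $i$ gives a complex of graded right $B$-modules whose terms are $\,^!A^*_{\bullet,j}\otimes e_{-j-\bullet}B$ and $V_{\bullet,j}\otimes e_{-j-\bullet}B$, precisely the terms of the degree $(-j)$ Koszul-type complex for the $A$-module $B = \End(\underline{\cL})_{\geq 0}$. (That the connecting maps match up with the module-multiplication maps defining a pre-helix is exactly the content of Lemma~\ref{lemma.factorViaUnit} together with Remark~\ref{rem:preIsKoszul}; this makes $(\pi e_{-j}B)_{j}$ a pre-helix over $A$ in ${\sf Proj}\, B$, and completeness is inherited because the degree-one multiplication maps are the same ones that witness completeness of $\underline{\cL}$.) The key point is then that after applying $\pi = \pi_B$ these complexes of graded $B$-modules become exact in ${\sf Proj}\, B$.

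The main step is therefore to show that, for each $j$, the image under $\pi_B$ of the complex
$$
0 \to V_{-j+n,j}\otimes e_{-j-n}B \to \cdots \to \,^!A^*_{-j+2,j}\otimes e_{-j-2}B \to \,^!A^*_{-j+1,j}\otimes e_{-j-1}B \to e_{-j}B \to 0
$$
is exact. Here I would invoke Proposition~\ref{prop.fg}: under the stated $\Ext$-vanishing hypothesis, the \emph{start} of this complex is exact on the nose in ${\sf Gr}\, B$ in all high enough degrees, i.e. the complex of graded modules is exact in degrees $\geq m+n$. Since ${\sf Tors}\, B$ consists of modules generated by their right-bounded part (and ${\sf Tors}\, B$ is localizing by Theorem~\ref{theorem.morph}(2)), any complex of graded $B$-modules that is exact in all sufficiently large degrees becomes an exact sequence after applying $\pi_B$: the homology modules are right bounded, hence torsion, hence killed by $\pi_B$. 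This is the argument I would spell out — identify the homology of the complex of graded modules with a collection of right-bounded modules and conclude it is annihilated in ${\sf Proj}\, B$.

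The one subtlety, and where I expect the bookkeeping to be most delicate, is that Proposition~\ref{prop.fg}(1) literally only asserts exactness of the two-term \emph{tail} $\,^!A^*_{i+2,i}\otimes B_{i+2,i+v} \to A_{i,i+1}\otimes B_{i+1,i+v}\to B_{i,i+v}\to 0$ for $v\geq m+n$, not the full-length complex. To bridge this I would run the syzygy argument of Lemma~\ref{lemma.lessurj} in reverse: applying $(\cL_{-j-v},-)$ to the honest exact sequence (\ref{eqn.bigeuler}) in {\sf C} and using the hypothesis ${}^p(\cL_{-j-v},\cL_{-j-i})=0$ for $p>0$ and all relevant $i$ once $v$ is large, Lemma~\ref{lemma.lessurj} gives that the \emph{entire} complex $(\cL_{-j-v}, \text{(\ref{eqn.bigeuler})})$ is exact, not merely its tail — this is in fact what the proof of Proposition~\ref{prop.fg} already does before truncating. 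So the complex of graded $B$-modules above is exact in every degree $\geq m+n$, its homology is right bounded, and $\pi_B$ kills it. Finally I would note that the helix structure on $(\pi e_{-j}B)$ is canonical: it is the image under the additive functor $\pi_B \circ (\underline{\cL},?)$ of the canonical helix structure on $\underline{\cL}$, so naturality in $j$ and compatibility of the $\phi_1,\phi_2$ with the inclusion-of-relations maps (property (b) of Definition~\ref{def:prehelix}) are automatic. That completes the verification that $(\pi e_{-j}B)_{j\in\bZ}$ is a complete helix of length $n$ over $A$ in ${\sf Proj}\, B$.
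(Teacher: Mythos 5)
Your proposal is correct and follows essentially the same route as the paper: the paper's (terse) proof is exactly your main step, namely applying $(\cL_{-j-v},?)$ to (\ref{eqn.littleeuler}) or (\ref{eqn.bigeuler}) and invoking Lemma~\ref{lemma.lessurj} to get exactness of the resulting complexes of graded $B$-modules in all degrees $v \geq m+n$, so that the homology is right bounded, hence torsion, hence killed by $\pi_B$. Apart from some minor index bookkeeping (the terms should read $^!A^{*}_{j+\bullet,j}\otimes e_{j+\bullet}B$) and the unneeded remarks on completeness, your write-up is a fleshed-out version of the paper's argument.
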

\begin{proof}
Given the Ext vanishing hypotheses on the $\cL_i$, we see from Lemma~\ref{lemma.lessurj} that $(\cL_{-j-v},?)$ is exact on (\ref{eqn.littleeuler}) or (\ref{eqn.bigeuler}) whenever $v \geq m+n$. This gives the desired exact helix sequences in ${\sf Proj}\, B$.
\end{proof}

For the remainder of the section, we will utilize internal tensor and hom functors introduced in \cite[Section 4 and Section 5]{morinyman}.  Recall that if $B$ and $C$ are $\mathbb{Z}$-indexed algebras, $M$ is a graded $B$-module and $N$ is a bigraded $B-C$-bimodule, we define a graded right $C$-module
$$
M \intotimesb N := \operatorname{cok} \bigl(\bigoplus_{l,m} M_l \otimes_{B_{l,l}} B_{l,m} \otimes_{B_{m,m}} e_{m}N \xrightarrow{\mu \otimes 1 - 1 \otimes \mu} \bigoplus_n M_n \otimes_{B_{n,n}} e_{n}N \bigr).
$$
We denote the $i$th left-derived functor of $-\intotimesb N$ by $\uTor_{i}^{B}(-,N)$.

If, furthermore, $P$ is an object in ${\sf Gr }C$, we let
$$
\sHomc(e_{i}N,P)
$$
denote the right $B_{ii}$-module with underlying set  $\operatorname{Hom}_{C}(e_{i}N,P)$ and with $B_{ii}$-action induced by the left action of $B_{ii}$ on $e_{i}N$, and we let
$$
{\usHomc}(N,P)
$$
denote the object in ${\sf Gr }B$ with $i$th component $\sHomc(e_{i}N,P)$ and with multiplication induced by left-multiplication of $B$ on $N$.

\begin{lemma} \label{lemma.rest}
Let $\psi:B \rightarrow C$ be a morphism of $\mathbb{Z}$-algebras.  Then the restriction of scalars functor
$$
\psi_{*}:{\sf Gr }C \longrightarrow {\sf Gr }B
$$
has a left-adjoint, denoted by $\psi^{*}$.
\end{lemma}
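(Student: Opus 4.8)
The plan is to exhibit $\psi^{*}$ explicitly as an internal tensor product and to obtain the adjunction from the formal properties of the internal tensor and hom functors of \cite{morinyman}. First I would regard $C$ as a bigraded $B$-$C$-bimodule, with right $C$-module structure given by multiplication in $C$ and left $B$-module structure given by multiplication composed with $\psi$; concretely each $C_{ij}$ becomes a $(B_{ii},C_{jj})$-bimodule with $B_{ii}$ acting through $\psi\colon B_{ii}\to C_{ii}$. I then set
$$
\psi^{*} := - \intotimesb C \colon {\sf Gr}\,B \longrightarrow {\sf Gr}\,C.
$$

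The key fact to establish is a natural isomorphism $\usHomc(C, P) \cong \psi_{*}P$ for $P \in {\sf Gr}\,C$. The $i$-th component of the left-hand side is the right $B_{ii}$-module with underlying group $\Hom_{C}(e_{i}C, P)$, and since $e_{i}C$ is the free right $C$-module on one generator in degree $i$, evaluation at $e_{i}$ identifies this group with $P_{i}$. Under this identification the right $B_{ii}$-action --- which is precomposition with left multiplication of $B_{ii}$ on $e_{i}C$, i.e.\ with multiplication by $\psi(B_{ii}) \subseteq C_{ii}$ --- becomes $(f\cdot b)(e_{i}) = f(\psi(b)\,e_{i}) = f(e_{i})\,\psi(b)$ (using $C$-linearity of $f$ and the unit axioms), which is precisely the $\psi$-twisted $B_{ii}$-action on $P_{i}$ defining $\psi_{*}P$. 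The same computation with $B_{ij}$ ($i\le j$) in place of $B_{ii}$ shows the identification respects the whole graded $B$-module structure, and it is visibly natural in $P$. Specializing the adjunction $\Hom_{C}(M \intotimesb N, P) \cong \Hom_{B}(M, \usHomc(N, P))$ of \cite{morinyman} to $N = C$ then gives isomorphisms
$$
\Hom_{C}(\psi^{*}M, P) \;\cong\; \Hom_{B}\bigl(M, \usHomc(C, P)\bigr) \;\cong\; \Hom_{B}(M, \psi_{*}P)
$$
natural in $M$ and $P$, which is the asserted adjunction.

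I expect the only real obstacle to be the bookkeeping behind $\usHomc(C, P) \cong \psi_{*}P$, namely that evaluation at the idempotents $e_{i}$ is simultaneously compatible with all the module multiplications in play; everything else is formal once the internal tensor-hom adjunction of \cite{morinyman} is invoked. Alternatively, one can bypass that citation entirely by an abstract argument: $\psi_{*}$ is exact and commutes with arbitrary products, hence preserves all small limits, and since ${\sf Gr}\,C$ is a Grothendieck category the solution-set condition holds automatically (any morphism $M \to \psi_{*}P$ factors through the $C$-submodule of $P$ generated by the image of $M$, whose cardinality is bounded in terms of those of $M$ and $C$), so Freyd's adjoint functor theorem produces the left adjoint $\psi^{*}$.
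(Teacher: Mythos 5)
Your proposal is correct and follows essentially the same route as the paper: define $\psi^{*} = -\,\intotimesb C$ and obtain the adjunction from \cite[Proposition 5.3]{morinyman} together with the identification $\usHomc({}_{B}C_{C},-)\cong\psi_{*}$, which you verify in more detail than the paper does. The alternative appeal to the adjoint functor theorem at the end is a valid fallback but is not what the paper uses.
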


\begin{proof}
It suffices to exhibit $\psi_{*}$ as right-adjoint to $-\underline{\otimes}_{B} C$.  By \cite[Proposition 5.3]{morinyman}, this follows from the fact that there is an isomorphism of functors $\psi_{*}(-) \cong \usHomc({}_{B}C_{C},-)$.
\end{proof}

We once again remind the reader that for the remainder of this section, we assume $\underline{\mathcal{L}}$ is a complete helix of length $n$ over a quadratic algebra $A$.
 We will use the following elementary fact repeatedly in the proof of our next theorem.

\begin{lemma}  \label{lem:tensorSi}
Let $S_i = e_iA/(e_iA)_{>i}$ and $S_i^{op} = Ae_i/(Ae_i)_{<i}$.
\begin{enumerate}
    \item If $N$ is a left $A$-module generated in degrees $>i$ then $S_i \underline{\otimes}_A N = 0$.
    \item If $M$ is a right $A$-module generated in degrees $<i$ then $M \underline{\otimes}_A S_i^{op} = 0$.
\end{enumerate}
\end{lemma}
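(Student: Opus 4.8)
The plan is to unwind the definition of the internal tensor product; because $S_i$ and $S_i^{op}$ are concentrated in the single degree $i$, the defining cokernels collapse almost completely, and the lemma reduces to the defining property of ``generated in degrees $>i$'' (resp.\ ``$<i$'').

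For part~(1) I would write $N_m := e_m N$. Since $(S_i)_l = 0$ for $l \neq i$ and $(S_i)_i = A_{ii} = D_i$ (using that $A$ is connected), the target of the presentation map defining $S_i \underline{\otimes}_A N$ is $\bigoplus_n (S_i)_n \otimes_{D_n} N_n = N_i$, and its source is $\bigoplus_m D_i \otimes_{D_i} A_{im} \otimes_{D_m} N_m = \bigoplus_m A_{im} \otimes_{D_m} N_m$. I would then inspect the map $\mu\otimes 1 - 1\otimes\mu$ one summand at a time: for $m<i$ the summand vanishes since $A$ is positively graded; for $m=i$ the two maps $\mu\otimes 1$ and $1\otimes\mu$ coincide, both being induced by the left $D_i$-action on $N_i$ via the module axiom, so this summand contributes nothing; and for $m>i$ the component $\mu\otimes 1$ lands in $(S_i)_m = 0$, so the relation map restricts there to (minus) the module multiplication $A_{im}\otimes_{D_m}N_m \to N_i$. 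This gives
$$ S_i \underline{\otimes}_A N \;\cong\; N_i \big/ \textstyle\sum_{m>i} A_{im}N_m . $$
If $N$ is generated in degrees $>i$ then $N = \sum_{m>i} A N_m$, and extracting the degree-$i$ component yields $N_i = \sum_{m>i} A_{im}N_m$, so the quotient is zero.

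Part~(2) is the mirror image: the identical unwinding gives $M \underline{\otimes}_A S_i^{op} \cong M_i/\sum_{l<i} M_l A_{li}$, and if $M$ is generated in degrees $<i$ then $M_i = \sum_{l<i} M_l A_{li}$, so this vanishes. (Alternatively, one can deduce part~(2) from part~(1) applied to the opposite $\mathbb{Z}$-indexed algebra.)

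The argument is routine bookkeeping rather than anything deep; the one step needing a moment's care is the vanishing of the diagonal ($m=i$) summand of the relation map, which is exactly where connectedness ($A_{ii}=D_i$) and the unit/module axioms are used. A slicker but less self-contained route would instead invoke right exactness of $-\underline{\otimes}_A N$, replace $N$ by a free cover $\bigoplus_\alpha Ae_{m_\alpha}$ with all $m_\alpha > i$, and note that $S_i \underline{\otimes}_A (Ae_m) = (S_i)_m = 0$ for $m\neq i$; I would keep the direct computation as the primary argument to avoid relying on properties of $\underline{\otimes}$ beyond its definition.
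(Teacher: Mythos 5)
The paper states this lemma without proof, calling it an ``elementary fact,'' so there is no argument in the paper to compare against. Your direct unwinding of the cokernel presentation of $\underline{\otimes}_A$ is correct: the identification $(S_i)_l = \delta_{l,i}\,D_i$ collapses the source and target as you describe, the positivity of the grading kills the $m<i$ summands, connectedness and the module axiom make the $m=i$ summand of $\mu\otimes 1 - 1\otimes\mu$ vanish, and for $m>i$ the $\mu\otimes 1$ component dies in $(S_i)_m=0$, leaving $S_i\underline{\otimes}_A N \cong N_i/\sum_{m>i}A_{im}N_m$, which is zero exactly when $N$ is generated in degrees $>i$; part~(2) is symmetric. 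Both your primary computation and the alternate argument via right exactness and a free cover by modules $Ae_m$ with $m>i$ (using $S_i\underline{\otimes}_A Ae_m=(S_i)_m=0$) are sound and are the standard routes to such a statement.
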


\begin{theorem} \label{thm:adjoint}
Let $\underline{\cL}$ be a complete helix of length $n$ over $A$, satisfying the hypotheses of Theorem \ref{theorem.morph}.  Let $\psi: A \rightarrow \End({\underline{\mathcal{L}}})_{\geq 0}=:B$ be the map constructed in Proposition \ref{prop.MapToEndo}.  Then the functor $\psi^{*}$ constructed in Lemma \ref{lemma.rest} descends to a functor
$$
{\sf Proj }A \longrightarrow {\sf Proj }B
$$
which is left-adjoint to the functor in Theorem \ref{theorem.morph}(3).
\end{theorem}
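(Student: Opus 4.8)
The plan is to show that $\psi^*$ sends $\mathsf{Tors}\,A$ into $\mathsf{Tors}\,B$, so that it descends to the quotient categories, and then to verify the adjunction at the level of $\mathsf{Proj}$ by transporting the adjunction $(\psi^*,\psi_*)$ of Lemma~\ref{lemma.rest} through the quotient functors $\pi_A,\pi_B$. First I would recall that by Theorem~\ref{theorem.morph} the restriction functor $\psi_* \colon \mathsf{Gr}\,B \to \mathsf{Gr}\,A$ preserves torsion (restriction of a bounded-generation module is bounded), so it descends to $\overline{\psi_*}\colon \mathsf{Proj}\,B \to \mathsf{Proj}\,A$, which is the functor in Theorem~\ref{theorem.morph}(3). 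The substance is the dual statement for $\psi^* = -\intotimes_A {}_A B_B$. Since both $\mathsf{Tors}\,A$ and $\mathsf{Tors}\,B$ are localizing (Theorem~\ref{theorem.morph}(2) and its proof), it suffices to check that $\psi^*$ kills the torsion simple-ish modules: concretely, that for each $i$, the graded right $B$-module $S_i \intotimes_A B$ is torsion, where $S_i = e_i A/(e_iA)_{>i}$. Every object of $\mathsf{Tors}\,A$ is a direct limit of modules with finite filtrations by shifts of the $S_i$ (using that $A$ is connected and locally finite), and $-\intotimes_A B$ is right exact and commutes with direct limits, so control of $S_i \intotimes_A B$ suffices.

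To compute $S_i \intotimes_A B$, I would use the complete helix. By Remark~\ref{rem:preIsKoszul} the start of the Koszul complex for the $A$-module $\underline{\cL}$ is the pre-helix complex, and applying $(\cL_{-j-v},-)$ to the helix exact sequences~(\ref{eqn.littleeuler})/(\ref{eqn.bigeuler}) for $v \geq m+n$ converts the Koszul complex of $\underline{\cL}$ into the Koszul complex of the $A$-module $B$ (this is exactly the observation in the proof of Proposition~\ref{prop.fg}). Hence Proposition~\ref{prop.fg}(1) tells us the right-hand end of the degree-$j$ Koszul complex for $B$,
$$
{}^!A^{*}_{j+2,j}\otimes B_{j+2,\,j+v} \longrightarrow A_{j,j+1}\otimes B_{j+1,\,j+v} \xrightarrow{\ \mu\ } B_{j,\,j+v} \longrightarrow 0,
$$
is exact for all $v \geq m+n$. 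Translating via the $\intotimes$-presentation of $-\intotimes_A B$ (its cokernel description in terms of $M_l \otimes A_{l,m} \otimes e_m N$), the cokernel $S_i \intotimes_A B$ has $v$-th graded piece equal to $B_{i,\,i+v}/\mu(A_{i,i+1}\otimes B_{i+1,\,i+v})$, which by the displayed exactness vanishes for all $v \geq m+n$. Thus $S_i \intotimes_A B$ is right-bounded, hence in $\mathsf{Tors}\,B$; more generally $\uTor^A_0(S_i, B)$ is torsion, and since $\uTor^A_{>0}(S_i,B)$ plays no role in the statement (we only need right-exactness), $\psi^*(\mathsf{Tors}\,A)\subseteq \mathsf{Tors}\,B$. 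Therefore $\psi^*$ descends to $\overline{\psi^*}\colon \mathsf{Proj}\,A \to \mathsf{Proj}\,B$, $\pi_B\psi^* \cong \overline{\psi^*}\pi_A$.

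Finally I would produce the adjunction $\overline{\psi^*} \dashv \overline{\psi_*}$. Because $\pi_A,\pi_B$ are localizations with fully faithful right adjoints $\omega_A,\omega_B$ (section functors), and because $\psi_* = \omega_A \circ (\text{something})$-compatible — more precisely $\pi_A \psi_* \cong \overline{\psi_*}\pi_B$ with $\psi_*$ preserving torsion — the standard descent-of-adjunction lemma for localizations applies: given $(\psi^*,\psi_*)$ with $\psi^*$ sending the torsion class of $A$ into that of $B$ and $\psi_*$ sending the torsion class of $B$ into that of $A$, the induced functors on the quotients form an adjoint pair. Concretely, for $\mathcal{F}\in\mathsf{Proj}\,A$, $\mathcal{G}\in\mathsf{Proj}\,B$ one has
$$
\Hom_{\mathsf{Proj}\,B}(\overline{\psi^*}\mathcal F,\mathcal G)\cong \Hom_{\mathsf{Gr}\,B}(\psi^*\omega_A\mathcal F,\omega_B\mathcal G)\cong \Hom_{\mathsf{Gr}\,A}(\omega_A\mathcal F,\psi_*\omega_B\mathcal G)\cong \Hom_{\mathsf{Proj}\,A}(\mathcal F,\overline{\psi_*}\mathcal G),
$$
where the middle isomorphism is Lemma~\ref{lemma.rest} and the outer two use full faithfulness of the section functors together with the fact that $\pi_B\psi^*\omega_A\cong\overline{\psi^*}$ and $\pi_A\psi_*\omega_B\cong\overline{\psi_*}$. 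Naturality in both variables is inherited from the unquotiented adjunction. The main obstacle is the first real step — verifying $\psi^*(\mathsf{Tors}\,A)\subseteq\mathsf{Tors}\,B$ — and the crux of that is recognizing that the helix exact sequences convert the Koszul resolution of $\underline{\cL}$ into the Koszul complex of $B$, so that the right-exactness of $-\intotimes_A B$ plus Proposition~\ref{prop.fg}(1) forces $S_i\intotimes_A B$ to be bounded; once that is in hand, the descent of the adjunction is formal localization theory.
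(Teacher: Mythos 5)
Your treatment of the torsion condition on $\psi^* = -\underline{\otimes}_A B$ is essentially correct and close in spirit to the paper's: you deduce that $S_i\underline{\otimes}_A B$ is right bounded from Proposition~\ref{prop.fg} (the paper uses part~(3) of that proposition together with Lemma~\ref{lem:tensorSi}, whereas you argue via part~(1) and the Koszul complex, but either works). The genuine gap lies in your parenthetical remark that $\underline{\cT or}^A_{>0}(S_i,B)$ ``plays no role in the statement'' and that the descent of the adjunction is ``formal localization theory.'' The paper's proof explicitly establishes, as a second and independent step, that the first left-derived functor of $\pi_B\psi^*$ vanishes on $\mathsf{Tors}\,A$, i.e.\ that $\underline{\cT or}^A_1(S_i,B)$ is torsion, and this is needed; the paper devotes the bulk of the argument (the exact sequence $0\to AB_{j-m-n+1,j}\to Be_j\to T\to 0$ and the technical Lemma~\ref{lem:syzygyGen}) to precisely that verification.

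To see why torsion-preservation by $\psi^*$ and $\psi_*$ alone does not suffice, trace your own chain of isomorphisms: the last step asks for
$$
\Hom_{\mathsf{Gr}\,A}(\omega_A\cF,\ \psi_*\omega_B\cG)\ \cong\ \Hom_{\mathsf{Gr}\,A}(\omega_A\cF,\ \omega_A\pi_A\psi_*\omega_B\cG),
$$
via the unit $\eta\colon\psi_*\omega_B\cG\to\omega_A\pi_A\psi_*\omega_B\cG$. This is an isomorphism precisely when $\psi_*\omega_B\cG$ is \emph{closed} (saturated) in $\mathsf{Gr}\,A$, i.e.\ torsion-free with $\Ext^1_{\mathsf{Gr}\,A}(T,\psi_*\omega_B\cG)=0$ for all torsion $T$. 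Torsion-freeness follows from $\psi^*$ preserving torsion, but the vanishing of $\Ext^1$ is not formal because $\psi^*$ is only \emph{right} exact: applying $\Hom_{\mathsf{Gr}\,B}(-,\omega_B\cG)$ to a projective resolution of $T$ pushed through $\psi^*$ produces, besides $\Ext^1_{\mathsf{Gr}\,B}(\psi^*T,\omega_B\cG)$ (which vanishes since $\psi^*T$ is torsion and $\omega_B\cG$ is closed), a contribution from $\Hom_{\mathsf{Gr}\,B}(\underline{\cT or}^A_1(T,B),\omega_B\cG)$. That term dies exactly when $\underline{\cT or}^A_1(T,B)$ is torsion. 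So the second condition of \cite[Lemma 1.1]{paulmorph} is not cosmetic; it is the hypothesis under which the saturation needed for your final isomorphism holds, and a complete proof must verify it. Your argument as written would, for instance, ``prove'' that any right-exact left adjoint preserving torsion descends to a left adjoint between the quotient categories, which is false in general.
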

\begin{proof}
By \cite[Lemma 1.1]{paulmorph}, it suffices to prove
\begin{enumerate}
\item{} $-\underline{\otimes}_{A}B$ takes torsion $A$-modules to torsion $B$-modules, and

\item{} the first left-derived functor of $\pi_{B} \psi^{*}$ vanishes on ${\sf Tors }A$.
\end{enumerate}
To prove the first assertion, it suffices to prove that $S_{i} \underline{\otimes}_{A} B$ is torsion, where $S_{i}=e_{i}A/(e_{i}A)_{> i}$.  By Proposition \ref{prop.fg}(3), $Be_j$ is generated as an $A$-module in degrees $> j-m-n$. Hence $S_i \underline{\otimes}_A Be_j = 0$ as soon as $j \geq i + m + n$ and part~(1) is proved.

We now prove part~(2). Note that the first left-derived functor of $\pi_B h^*$ is
$$
\pi_B \underline{\cT or}^A_1(-,B)
$$
so it suffices to show that $\underline{\cT or}^A_1(S_i,B)$ is torsion. From Proposition~\ref{prop.fg}(3), there exists an exact sequence of $A$-modules of the form
$$
0 \to AB_{j-m-n+1,j} \to Be_j \to T \to 0
$$
where $T$ lives in degrees $j-m-n+2, \ldots, j$.

We prove separately that $\underline{\cT or}^A_1(S_i,AB_{j-m-n+1,j})=0 $  and
$\underline{\cT or}^A_1(S_i,T)=0$ for $j$ large enough. Consider the partial projective resolution
$$
0 \to A_{i, i+1} A \to e_i A \to S_i \to 0.
$$
Lemma~\ref{lem:tensorSi}(2) shows that $A_{i,i+1}A \underline{\otimes}_A T = 0$ as soon as $i+1 < j-m-n+2$ so $\underline{\cT or}^A_1(S_i,T)=0$ for $j$ large enough.

We compute $\underline{\cT or}^A_1(S_i,AB_{j-m-n+1,j})$ using the partial projective resolution
$$
0 \to K \to A \otimes B_{j-m-n+1,j} \to AB_{j-m-n+1,j} \to 0.
$$
In view of Lemma~\ref{lem:tensorSi} again, the proof of the theorem will be complete if we can establish the following result.
\begin{lemma}  \label{lem:syzygyGen}
The $A$-module $K$ is generated in degrees $\geq j-m-n$.
\end{lemma}
\begin{proof}
Let $s < j-m-n$ and $r \in K_s$. Since $A$ is generated in degree one, we may lift $r$ to some $\tilde{r} \in A_{s,s+1} \otimes \ldots \otimes A_{j-m-n,j-m-n+1} \otimes B_{j-m-n+1,j}$. Consider the composite map
$$
A_{s,s+1} \otimes \ldots \otimes A_{j-m-n,j-m-n+1} \otimes B_{j-m-n+1,j} \xto{1 \otimes\mu'}
A_{s,s+1} \otimes B_{s+1,j} \xto{\mu} B_{s,j}.
$$
Now by Proposition~\ref{prop.fg}(1) we have
$$(1 \otimes \mu')(\tilde{r}) \in \ker \mu = \text{im} (\rho\colon I_{s,s+2} \otimes B_{s+2,j} \to A_{s,s+1} \otimes B_{s+1,j})$$
where $I_{s,s+2}$ are the quadratic relations in $A_{s,s+2}$. We may thus find $\tilde{r}' \in A_{s,s+1} \otimes \ldots \otimes A_{j-m-n,j-m-n+1} \otimes B_{j-m-n+1,j}$ such that $(1 \otimes \mu')(\tilde{r}) = (1 \otimes \mu')(\tilde{r}')$ and such that $\tilde{r}'$ maps to zero in $A \otimes B_{j-m-n+1,j}$. We may thus replace $\tilde{r}$ with
$$
\tilde{r} - \tilde{r}' \in \ker (1 \otimes \mu') = A_{s,s+1} \otimes \ker \mu'.
$$
But this shows that $K_s = A_{s,s+1}K_{s+1}$ and we are done by induction.
\end{proof}
This completes the proof of Theorem~\ref{thm:adjoint}.
\end{proof}

\section{Relation to elliptic helices of period 3}  \label{sec:ellitpic}

For the remainder of this paper, we will assume that $k$ is an algebraically closed field of characteristic zero which will be our base field so $D = k$.
In \cite{bp}, Bondal-Polishchuk  introduced a different notion of a helix called an elliptic helix of period 3. We show how,  when these live in a $k$-linear abelian category {\sf C}, they give examples of helices in our sense. Since $D=k$, left and right duals coincide, so we will revert to the more traditional notation $A^!$ for the Koszul dual of $A$ rather than $^!A$.

\begin{definition}  \label{def:ellipticExc}
(\cite[Section~7, p.249]{bp})
An object $\cL \in {\sf C}$ is {\it elliptically exceptional} if i) $^j(\cL,\cL) \cong k$ for $j = 0,1$ and is zero otherwise and ii) for any $\cF \in {\sf C}$, the natural pairing $(\cL,\cF) \otimes \,^1(\cF,\cL) \to \,^1(\cL,\cL) = k$ is non-degenerate.
\end{definition}

Unlike in \cite{bp}, we will work in the $k$-linear abelian category {\sf C} as opposed to the derived category, so we introduce the following
\begin{definition}  \label{def:mutable}
An ordered pair $\cE,\cF$ of objects in {\sf C} is said to be {\it left mutable} if $^j(\cE,\cF) = 0$ for $j \neq 0$ and furthermore the evaluation map
$$
\eta \colon (\cE,\cF) \otimes \cE \to \cF
$$
is surjective. In this case we define the {\it left mutation} $L_\cE \cF := \ker \eta$. We also say {\it $\cF$ left mutates through $\cE$ in {\sf C}}. The right handed versions are defined similarly.
\end{definition}

Our abelian category version of Bondal-Polishchuk's elliptic helices is given by the following.
\begin{definition}  \label{def:ellipticHelix}
(\cite[Section~7, p. 250]{bp}) Let $\underline{\cL} = (\cL_i)_{i \in \mathbb{Z}}$ be a sequence of elliptically exceptional objects in {\sf C}. We say $\underline{\cL}$ is an {\it elliptic helix of period three} if
\begin{enumerate}
    \item for all $i<j$ we have $^l(\cL_i,\cL_j) = 0$ for $l \neq 0$ while all the $(\cL_i,\cL_j)$ are finite dimensional and,
    \item for all $i$, $\cL_i$ left mutates through $\cL_{i-1}$, $L^1 \cL_i:= L_{\cL_{i-1}} \cL_i$ left mutates through $\cL_{i-2}$ and $L^2 \cL_i := L_{\cL_{i-2}} L^1 \cL_i$ is isomorphic to $\cL_{i-3}$.
\end{enumerate}
\end{definition}

We need some facts about the endomorphism algebra of elliptic helices proved by Bondal-Polishchuk, though not explicitly stated.  Only the last statement below is new and is an analogue of \cite[Theorem~6.6(1)]{atv1}.  The concept of {\it Frobenious algebra of index $n$} invoked in the following result is defined in \cite[p. 239]{bp}.

\begin{theorem}[\cite{bp}]  \label{thm:BPBisKoszul}
Let $\underline{\cL}$ be an elliptic helix of period 3. Let $A = \bS^{nc}(\underline{\cL})$ be the quadratic part of the endomorphism algebra $B:=\End(\underline{\cL})$. Then the following hold.
\begin{enumerate}
    \item{} The algebra $A$ is $3$-periodic, Koszul, has global dimension three, and is AS-regular of dimension three and Gorenstein parameter three (in the sense of \cite[Definition 7.1]{morinyman}).
    \item{} The canonical map $A \longrightarrow B$ is surjective so in particular, $B$ is generated in degree one.
    \item{} $A^{!}$ is a Frobenius $\mathbb{Z}$-indexed algebra of index 3.
    \item $B$ is graded torsion-free in the sense that as a left and right $B$-module, it has no non-zero finite dimensional submodules.
\end{enumerate}
\end{theorem}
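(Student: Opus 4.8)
The plan is to prove the statement for left modules; the argument for right modules is symmetric (using the Koszul duality symmetry and the fact that $A$ is AS-regular, hence has a balanced dualizing complex). Let $N \subseteq {}_BB$ be a nonzero finite-dimensional graded left submodule. I want to derive a contradiction. First I would reduce to the case where $N = Be_j \cdot x$ is generated by a single nonzero homogeneous element $x \in B_{i,j}$ whose image in the quotient $B/B_{>0}\cdot B$-type filtration is nonzero — more concretely, I would pick $x$ of \emph{maximal degree} among nonzero elements lying in a finite-dimensional submodule, so that $B_{i',i}\cdot x = 0$ for all $i' < i$, i.e. $x$ is annihilated by everything of positive degree acting on the left. (Equivalently, $x$ spans a copy of the simple module $S_i^{op} = Ae_i/(Ae_i)_{<i}$ inside ${}_BB$, via the surjection $\psi\colon A \twoheadrightarrow B$ of part~(2).)

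The key step is then to compute $\Ext$ or $\uTor$ groups that detect such a submodule and show they vanish. Here is where the AS-regularity and Koszulness of $A$ from part~(1) do the work. Since $\psi\colon A \to B$ is surjective, a nonzero map $S_i^{op} \hookrightarrow {}_BB$ restricts along $\psi$ to a nonzero map of left $A$-modules $S_i^{op} \to {}_AB$. I would translate ``$B$ has a finite-dimensional left submodule'' into a statement about $\uHom_A(S_i^{op}, B)$ or, dually, about the top nonvanishing $\uTor^A_\bullet(B, S_i)$, and then exploit that $B = \End(\underline{\cL})_{\geq 0}$ is built from the helix: by Remark~\ref{rem:preIsKoszul} and Proposition~\ref{prop.fg}, the Koszul complex of the $A$-module $B$ is eventually exact, so in high internal degree $B$ has finite projective dimension over $A$ equal to $3 = \gldim A$, and the local cohomology / $\uTor$ computation that governs finite-dimensional submodules is forced to vanish because the Gorenstein parameter is $3$ and $B$ has Hilbert series $\frac{1-t^3}{1-dt+dt^2-t^3}$ (from part~(3) of Theorem~\ref{thm:mainSncEndL}), which has no pole issues incompatible with torsion-freeness. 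Concretely, I would run the argument of \cite[Theorem~6.6(1)]{atv1}: the normal element $\underline g$ of degree $3$ (Theorem~\ref{thm:kernelGenByNormal}) gives a short exact sequence $0 \to A(-3) \xrightarrow{\underline g} A \to B \to 0$ of $A$-bimodules (up to the $\mathbb Z$-indexed shift bookkeeping), and since $\underline g$ is a \emph{regular} normal family, multiplication by $\underline g$ is injective; feeding this into the long exact $\uTor$-sequence shows any finite-dimensional $B$-submodule would have to come from a finite-dimensional $A$-submodule of $A$ or $A(-3)$, and $A$ — being a domain-like AS-regular algebra, in particular graded-torsion-free as it is a connected Koszul AS-regular $\mathbb Z$-indexed algebra — has none.

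The main obstacle I anticipate is the bookkeeping needed to make ``$A$ is graded torsion-free'' itself rigorous in the $\mathbb Z$-indexed, locally-finite, \emph{non-noetherian} setting: the classical proof that a connected graded AS-regular algebra has no finite-dimensional submodules uses that $A$ has finite injective dimension and a one-dimensional top local cohomology concentrated in a single degree, and one must check the $\mathbb Z$-indexed analogues (Gorenstein parameter $3$, $\uExt^j_A(S_i^{op}, A) = \delta_{j,3}S_{i+?}$ for the appropriate shift) which are exactly the AS-regularity statement from part~(1) — so really the obstacle is just carefully citing/assembling that. A secondary subtlety is ruling out \emph{infinitely generated} finite-dimensional-submodule phenomena: since $B$ is not noetherian, I cannot assume the socle is finitely generated, but the maximal-degree argument above sidesteps this because any \emph{single} finite-dimensional submodule has a top degree. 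Once the $A$-module statement is in hand, transferring along the short exact sequence $0 \to A(-3) \xrightarrow{\underline g} A \to B \to 0$ and its right-handed mirror finishes both cases.
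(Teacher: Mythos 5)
Your proposal diverges substantially from the paper's argument and, more importantly, has genuine gaps. The paper's proof of part~(4) is about five lines: since $B$ is generated in degree one by part~(2), a nonzero finite-dimensional submodule would produce a nonzero $s \in B_{ji} = (\cL_{-i},\cL_{-j})$ whose kernel contains the image of the evaluation map $\mathrm{ev}\colon B_{i,i+1}\otimes \cL_{-i-1} \to \cL_{-i}$; but $\mathrm{ev}$ is surjective precisely because $\cL_{-i}$ left mutates through $\cL_{-i-1}$ (this is built into Definition~\ref{def:ellipticHelix}), so $s$ kills all of $\cL_{-i}$, forcing $s=0$. That surjectivity-of-evaluation input is the entire engine of the argument, and it is entirely absent from your proposal.

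Your route, by contrast, invokes the Hilbert series computation (Proposition~\ref{prop:HilbertSeries}) and the regular normal family of degree three (Theorem~\ref{thm:kernelGenByNormal}), then tries to deduce torsion-freeness of $B$ from that of $A$ via the sequence $0 \to A\underline{g} \to A \to B \to 0$. There are three problems. First, a scope mismatch: those later results are proved only under the \emph{equigenerated} hypothesis and only in $\Coh(X)$, while Theorem~\ref{thm:BPBisKoszul}(4) is stated for an arbitrary elliptic helix of period 3 in an arbitrary abelian category, so your argument does not cover the full statement. Second, the transfer step does not go through as stated: if $N \subseteq B$ is a nonzero finite-dimensional submodule, its preimage in $A$ is not finite-dimensional (it contains the relevant piece of $A\underline{g}$), so ``$A$ torsion-free $\Rightarrow B$ torsion-free'' does not follow from the short exact sequence alone. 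The classical way to get this (as in \cite{atv1}) is to show $B$ is AS-Gorenstein of dimension two using a change-of-rings / Rees-lemma argument, and then deduce torsion-freeness from Gorensteinness — a nontrivial chain of lemmas in the $\mathbb Z$-indexed non-noetherian setting that the paper never establishes and that you yourself flag as an ``obstacle.'' Third, the base claim that a connected Koszul AS-regular $\mathbb Z$-indexed algebra is graded torsion-free, while plausible, is not proved anywhere in the paper, so citing it amounts to assuming a substantial missing lemma. Your maximal-degree reduction at the start is the one piece that parallels the actual proof (it is equivalent to the paper's ``left/right annihilator of $B_{i,i+1}$ is zero'' reduction); the rest should be replaced by the direct geometric observation about surjectivity of evaluation maps.
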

\begin{proof}
Let $B(S)$ be the index 3 Frobenius $\mathbb{Z}$-indexed algebra defined in \cite[p. 251]{bp}. Then, as stated in the proof of \cite[Theorem 7.4, p.~253]{bp}, there is a surjection $B(S)^{!} \rightarrow \End(\underline{\mathcal{L}})$ which is an isomorphism in degree one. Since $B(S)^!$ is quadratic, it induces a surjective homomorphism $B(S)^! \to A$. One deduces readily that this is an isomorphism in degree two from \cite[Equation (7.2)]{bp} in light of the form of $B(S)_{i,i+2}$ given immediately after \cite[Equation (7.2)]{bp}. This, together with \cite[Proposition 4.1]{bp}, Lemma \ref{lemma.Periodic}, Remark \ref{rem.dualkoszul} and \cite[Theorem 7.4]{bp}, yields parts~(1)-(3).

To prove part~(4), it suffices, in view of part~(2) to show that for all $i \in \bZ$, the left and right annihilators of $B_{i,i+1}$ in $B e_i$ and $e_{i+1}B$ respectively are zero. Suppose that $s \in B_{ji} = (\cL_{-i},\cL_{-j})$ annihilates $B_{i,i+1}$. This means that $\ker s \subseteq \cL_{-i}$ contains the image of the evaluation map $\text{ev} \colon B_{i,i+1} \otimes \cL_{-i-1} \to \cL_{-i}$. However, by definition of an ellitpic helix of period 3, $\cL_{-i}$ left mutates through $\cL_{-i-1}$ so ev is surjective. This shows that $\ker s = \cL_{-i}$ so $s=0$ proving the right annilator of $B_{i,i+1}$ is zero. A similar argument gives the left handed statement.
\end{proof}

Now let $\underline{\cL}$ be an elliptic helix of period 3. We wish to show it has the structure of a length 3 helix as per Definition~\ref{def:helix}. Consider first the defining exact sequence
\begin{equation}  \label{eq:rightMutate}
    0 \rightarrow \mathcal{L}_{-j-3} \rightarrow {}^{*}(\mathcal{L}_{-j-3}, \mathcal{L}_{-j-2}) \otimes \mathcal{L}_{-j-2} \xrightarrow{\pi} R_{\cL_{-j-2}}\mathcal{L}_{-j-3} \to 0.
\end{equation}
Furthermore, by \cite[Proposition~7.1]{bp}, we know that this is the same exact sequence as obtained by mutating $R_{\cL_{-j-2}}\mathcal{L}_{-j-3}$ left through $\cL_{-j-2}$. We also consider the defining exact sequence
\begin{equation}  \label{eq:leftMutate}
    0 \to L_{\mathcal{L}_{-j-1}}\cL_{-j}  \rightarrow (\mathcal{L}_{-j-1},\mathcal{L}_{-j})\otimes \mathcal{L}_{-j-1} \rightarrow \mathcal{L}_{-j} \rightarrow 0
\end{equation}
which again, is also the exact sequence of the corresponding right mutation. Now $\cL_{-j-3} \simeq L^2 \cL_{-j}$ means precisely that $R_{\cL_{-j-2}}\mathcal{L}_{-j-3} \simeq L_{\mathcal{L}_{-j-1}}\cL_{-j}$ so we may splice together (\ref{eq:rightMutate}) and (\ref{eq:leftMutate}). This gives the following

\begin{definition}
The {\it canonical helix structure} on the ellitpic helix $\underline{\cL}$ of period three is the one defined by splicing together the above exact sequences to get
\begin{equation} \label{eq:ellipticHelixStructure}
0 \rightarrow \mathcal{L}_{-j-3} \rightarrow {}^{*}(\mathcal{L}_{-j-3}, \mathcal{L}_{-j-2}) \otimes \mathcal{L}_{-j-2} \overset{\psi_{2}}{\rightarrow} (\mathcal{L}_{-j-1},\mathcal{L}_{-j})\otimes \mathcal{L}_{-j-1} \overset{\psi_1}{\rightarrow} \mathcal{L}_{-j} \rightarrow 0.
\end{equation}
\end{definition}
\begin{proposition} \label{prop:canonicalHelixOfElliptic}
The canonical helix structure on an elliptic helix $\underline{\cL}$ of period three is indeed a complete helix over $A:=\bS^{nc}(\underline{\cL})$. In fact, the helix exact sequences (\ref{eq:ellipticHelixStructure}) are Koszul complexes.
\end{proposition}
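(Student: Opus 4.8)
The plan is to verify directly that the spliced exact sequence (\ref{eq:ellipticHelixStructure}) satisfies all the axioms making $\underline{\cL}$ a complete helix of length $3$ over $A = \bS^{nc}(\underline{\cL})$ (Definition~\ref{def:helix}), and then to identify this sequence with the degree $j$ Koszul complex ${e_j}{}^!A^* \otimes \underline{\cL}$ (Definition~\ref{defn.AbstractKoszul}). The exactness is already built in by construction, so the real content is matching up the terms and differentials with the Koszul formalism. First I would record the identifications of the relevant graded pieces of $A$ and $A^!$: since $A$ coincides with $\End(\underline{\cL})$ in degree one, $A_{-j-1,-j} = (\cL_{-j-1},\cL_{-j})$, and since $A^!_{i+1,i} = \,^*A_{i,i+1}$ (here $D=k$ so $\,^* = (-)^*$), the term $A^!{}^*_{j+1,j}$ appearing in the Koszul complex is canonically $A_{-j-1,-j} = (\cL_{-j-1},\cL_{-j})$ after re-indexing to match the convention in (\ref{eqn.KoszulCompA}). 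Likewise $A^!{}^*_{j+2,j}$ is canonically the space of quadratic relations, which under the explicit description of $\bS^{nc}(\underline{\cL})$ coming from Bondal--Polishchuk (cf. the proof of Theorem~\ref{thm:BPBisKoszul}) is identified with $\,^*(\cL_{-j-3},\cL_{-j-2})$. This shows the terms of (\ref{eq:ellipticHelixStructure}) agree with those of the degree $j$ Koszul complex of $\underline{\cL}$.

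Next I would check that the maps $\psi_1$ and $\psi_2$ are the Koszul differentials. For $\psi_1$: the evaluation map $(\cL_{-j-1},\cL_{-j})\otimes \cL_{-j-1} \to \cL_{-j}$ of (\ref{eq:leftMutate}) is, by Lemma~\ref{lemma.factorViaUnit}(1) and Proposition~\ref{prop.KoszulEndTerms}(1), precisely $\psi_1$ for the module structure on $\underline{\cL}$, and its associated map $\mu_j\colon A_{-j-1,-j} \to (\cL_{-j-1},\cL_{-j})$ is the canonical isomorphism — hence property~(a) of Definition~\ref{def:prehelix} holds and the pre-helix is complete. For $\psi_2$: by the splicing construction, $\psi_2$ factors through $R_{\cL_{-j-2}}\cL_{-j-3} = L_{\cL_{-j-1}}\cL_{-j} = \ker\psi_1$, and the first leg (\ref{eq:rightMutate}) is the evaluation map for $\cL_{-j-2}$ acting. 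Using Lemma~\ref{lemma.factorViaUnit} together with Proposition~\ref{prop.KoszulEndTerms}(2), I would show $\psi_2$ equals the inclusion of relations $\rho$ tensored with $1$, followed by $1\otimes\mu_{j+1}$ — this is exactly property~(b) of Definition~\ref{def:prehelix}. The only subtle point is that the relation space of $A$ must be identified with $\,^*(\cL_{-j-3},\cL_{-j-2})$ compatibly with $\rho$; this is where I would invoke the description of $A = \bS^{nc}(\underline{\cL})$ via $B(S)^!$ from the proof of Theorem~\ref{thm:BPBisKoszul}, whose degree-two relations are precisely read off from the mutation structure, equivalently from the fact that $R_{\cL_{-j-2}}\cL_{-j-3} \cong L_{\cL_{-j-1}}\cL_{-j}$.

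Once properties~(a) and~(b) are verified, $\underline{\cL}$ is a complete pre-helix over $A$ by Definition~\ref{def:prehelix}, and since the spliced sequences (\ref{eq:ellipticHelixStructure}) are exact, $\underline{\cL}$ is a complete helix of length $3$ over $A$ by Definition~\ref{def:helix}; finally, by Remark~\ref{rem:preIsKoszul} the complex (\ref{eqn.PreHelix}) — here the middle three terms of (\ref{eq:ellipticHelixStructure}) — is the start of the Koszul complex ${e_j}{}^!A^*\otimes\underline{\cL}$, and since $A$ is Koszul of global dimension three (Theorem~\ref{thm:BPBisKoszul}(1)) this Koszul complex has length exactly $3$, so all of (\ref{eq:ellipticHelixStructure}) is the Koszul complex. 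I expect the main obstacle to be the bookkeeping in step two: tracking the canonical isomorphisms and the mutation-theoretic duality pairings (Definition~\ref{def:ellipticExc}(ii)) carefully enough to see that the identification of the relation space of $\bS^{nc}(\underline{\cL})$ with $\,^*(\cL_{-j-3},\cL_{-j-2})$ is the one that makes $\psi_2$ land on the Koszul differential $\psi_2$ of Proposition~\ref{prop.KoszulEndTerms}(2), rather than merely abstractly isomorphic to it.
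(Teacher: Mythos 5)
Your verification of the pre-helix axioms is essentially the paper's own argument: property (a) is immediate because $\psi_1$ is evaluation and $A_{j,j+1}=(\cL_{-j-1},\cL_{-j})$ by construction of $\bS^{nc}(\underline{\cL})$, and property (b) comes down to identifying the relation space $A^{!*}_{j+2,j}$ with ${}^{*}(\cL_{-j-3},\cL_{-j-2})\cong(\cL_{-j-2},R_{\cL_{-j-2}}\cL_{-j-3})$ compatibly with the inclusion of relations. You propose to extract that identification from the $B(S)^!$ presentation of $A$, whereas the paper proves it directly (Lemma~\ref{lem:relnInSncViaHelix}) by applying $(\cL_{-j-2},-)$ to the mutation sequence (\ref{eqn.rcan}); this difference is only one of route, and your version of this step is plausible.

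The genuine gap is in your final step. From the agreement of the three right-hand terms of (\ref{eq:ellipticHelixStructure}) with the start of the Koszul complex, together with the claim that the Koszul complex ``has length exactly $3$,'' you conclude that all of (\ref{eq:ellipticHelixStructure}) is the Koszul complex. Two things are missing. First, Koszulity plus global dimension three only gives $A^!_{l,j}=0$ for $l-j>3$; to know the leftmost Koszul term $A^{!*}_{j+3,j}\otimes\cL_{-j-3}$ really is $\cL_{-j-3}$ you need $\dim_k A^!_{j+3,j}=1$, i.e. the Frobenius-of-index-$3$ statement of Theorem~\ref{thm:BPBisKoszul}(3), not just part (1). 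Second, and more seriously, two complexes with the same objects and the same two rightmost differentials need not have the same leftmost differential: the Koszul differential $\iota_{\mathrm{Kos}}\colon A^{!*}_{j+3,j}\otimes\cL_{-j-3}\to A^{!*}_{j+2,j}\otimes\cL_{-j-2}$ is defined purely algebraically and could a priori differ from (even vanish instead of) the injection $\iota_{\mathrm{can}}$ coming from (\ref{eq:rightMutate}). The paper closes exactly this point: since the Koszul complex is a complex and (\ref{eq:ellipticHelixStructure}) is exact, $\iota_{\mathrm{Kos}}$ lands in $\ker\psi_2=\iota_{\mathrm{can}}(\cL_{-j-3})$, so it induces an element of $\End\cL_{-j-3}=k$ and it suffices to show $\iota_{\mathrm{Kos}}\neq 0$; this nonvanishing is deduced from the fact that $(\cL_{-j-3},\iota_{\mathrm{Kos}})$ contains the map $A^{!*}_{j+3,j}\to A^{!*}_{j+2,j}\otimes A_{j+2,j+3}$ of the Koszul complex (\ref{eqn.KoszulCompA}) for $A$, which is nonzero because $A$ is Koszul. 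Without an argument of this kind, the concluding sentence of your proposal does not follow.
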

\begin{proof}
We first show that (\ref{eq:ellipticHelixStructure}) define a pre-helix structure over $A$ so by Remark~\ref{rem:preIsKoszul}, we know that the first few terms coincide with the Koszul complex.

We check properties (a), (b) in Definition~\ref{def:prehelix}. Note that $\psi_1$ in (\ref{eq:ellipticHelixStructure}) is the evaluation homomorphism so $(\cL_{-j-1},\psi_1)$ is the identity and property (a) is verified.

We turn now to proving property (b) in Definition~\ref{def:prehelix}.  To this end, recall that if $I_j$ denotes quadratic relations in $A_{j,j+1} \otimes A_{j+1,j+2}$, then the dual of the exact sequence coming from the inclusion of relations into $A_{j,j+1} \otimes A_{j+1,j+2}$ identifies $A_{2+j,j}^{!}$ with $I_{j}^{*}$, so that $I_{j} \cong A_{2+j,j}^{!*}$.
\begin{lemma} \label{lem:relnInSncViaHelix}
There is a natural isomorphism
$$
(\mathcal{L}_{-j-2},R_{\mathcal{L}_{-j-2}}\mathcal{L}_{-j-3}) \cong I_{j}.
$$
\end{lemma}
\begin{proof}
By definition of $I_{j}$, it suffices to show that $(\mathcal{L}_{-j-2},R_{\mathcal{L}_{-j-2}}\mathcal{L}_{-j-3})$ is the kernel of multiplication
$$
(\mathcal{L}_{-j-1}, \mathcal{L}_{-j}) \otimes (\mathcal{L}_{-j-2}, \mathcal{L}_{-j-1}) \longrightarrow (\mathcal{L}_{-j-2}, \mathcal{L}_{-j}).
$$
To prove this, we note that since  $\underline{\mathcal{L}}$ is an elliptic helix of period 3, the exact sequence (\ref{eq:leftMutate}) can be re-written as
\begin{equation} \label{eqn.rcan}
0 \longrightarrow R_{\mathcal{L}_{-j-2}}\mathcal{L}_{-j-3} \xrightarrow{\iota} (\mathcal{L}_{-j-1}, \mathcal{L}_{-j}) \otimes \mathcal{L}_{-j-1} \longrightarrow \mathcal{L}_{-j} \longrightarrow 0.
\end{equation}

Applying $(\mathcal{L}_{-j-2},-)$ to this yields an exact sequence
\begin{equation} \label{eq:kerMult}
    0 \longrightarrow (\mathcal{L}_{-j-2},R_{\mathcal{L}_{-j-2}}\mathcal{L}_{-j-3}) \xrightarrow{\phi} (\mathcal{L}_{-j-1}, \mathcal{L}_{-j}) \otimes (\mathcal{L}_{-j-2}, \mathcal{L}_{-j-1}) \longrightarrow (\mathcal{L}_{-j-2}, \mathcal{L}_{-j})
\end{equation}
where the rightmost map is composition. The lemma follows by uniqueness of kernels.
\end{proof}

Property~(b) of Definition~\ref{def:prehelix} amounts to showing that $(\cL_{-j-2},\psi_2)$ coincides with $\phi$ above in (\ref{eq:kerMult}) on identifying
$$
^*(\cL_{-j-3},\cL_{-j-2}) = (\mathcal{L}_{-j-2},R_{\mathcal{L}_{-j-2}}\mathcal{L}_{-j-3}).
$$
To see this, note that by definition, $\psi_2$ is the composite of the non-trivial surjection $\pi$ in the exact sequence (\ref{eq:rightMutate}) with $\iota$ in (\ref{eqn.rcan}). Now $(\cL_{-j-2}, \pi)$ is just the identification $^*(\cL_{-j-3},\cL_{-j-2}) \xrightarrow{\simeq} (\mathcal{L}_{-j-2},R_{\mathcal{L}_{-j-2}}\mathcal{L}_{-j-3})$ whilst $(\cL_{-j-2}, \iota)$ is $\phi$ on the nose. This completes the proof that the canonical helix structure defines a pre-helix structure over $A$. Furthermore, by definition of $A$ we have $A_{j,j+1} = (\cL_{-j-1},\cL_{-j})$ so the pre-helix is complete.

Since $A^!$ is Frobenius of index 3, it remains only to check that the injection $\iota_{\text{can}} \colon \cL_{-j-3} \hookrightarrow {}^{*}(\mathcal{L}_{-j-3}, \mathcal{L}_{-j-2}) \otimes \mathcal{L}_{-j-2}$ in (\ref{eq:ellipticHelixStructure}) coincides with the remaining non-trivial map in the Koszul complex, namely,
$$
\iota_{\text{Kos}} \colon A^{!*}_{j+3,j} \otimes \cL_{-j-3} \to {}^{*}(\mathcal{L}_{-j-3}, \mathcal{L}_{-j-2}) \otimes \mathcal{L}_{-j-2}.
$$
Now $A^!$ being Frobenius of index 3 means in particular that $A^!_{j+3,j} \cong k$ so the objects in the Koszul complex and canonical exact sequence (\ref{eq:ellipticHelixStructure}) coincide. Furthermore, since the latter is exact, we know that $\iota_{\text{Kos}}$ maps $\cL_{-j-3}$ into $\iota_{\text{can}}(\cL_{-j-3})$. To see $\iota_{\text{Kos}} \colon \cL_{-j-3} \to \iota_{\text{can}}(\cL_{-j-3})$ is an isomorphism, we need only prove that $\iota_{\text{Kos}}$ is non-zero, for $\End \cL_{-j-3} = k$. To this end, note that $(\cL_{-j-3}, \iota_{\text{Kos}})$ is just the following part of the Koszul complex for $A$,
$$
A^{!*}_{j+3,j} \to A^{!*}_{j+2,j} \otimes A_{j+2,j+3}
$$
which is non-zero since $A$ is Koszul. This completes the proof of the proposition.

\end{proof}

\section{Mutating Vector Bundles on Elliptic Curves}

The key to constructing elliptic helices is to have a good criterion for left and right mutability. In this section, we establish such criteria in the case where ${\sf C} = \Coh(X)$ and $X$ is a smooth elliptic curve over an algebraically closed field $k$ of characteristic zero.

We recall that a simple sheaf $\cF$ on $X$ is one such that $\End \cF = k$ so is either a skyscraper sheaf, or a bundle of coprime rank and degree by Atiyah's classification of indecomposable bundles on $X$ \cite{Ati}. In what follows, we will routinely use the fact that if $\cE$ is an indecomposable bundle of degree $d$ and $d>0$, then $h^0(\cE)=d$, while if $d<0$ then $h^0(\cE)=0$ \cite[Lemma 2]{rudakov2}.

\begin{lemma}  \label{lem:dimHoms}
Let $\cE,\cF$ be simple bundles with $\mu(\cF) > \mu(\cE)$. Then $(\cF,\cE) = 0$ and
$$
\dim (\cE,\cF) = \deg (\cE^* \otimes \cF) = \rank(\cF)\rank(\cE) \left( \mu(\cF) - \mu(\cE)\right)
$$
\end{lemma}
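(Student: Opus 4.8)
The plan is to prove the vanishing $(\cF,\cE) = 0$ first, and then compute $\dim(\cE,\cF)$ via Riemann--Roch together with the vanishing of the other $\mathrm{Ext}$ group. For the vanishing: suppose $\phi \colon \cF \to \cE$ is a nonzero homomorphism of simple bundles. Since $X$ is an elliptic curve and $\cF,\cE$ are simple (hence indecomposable of coprime rank and degree, by Atiyah's classification, or else skyscrapers — but these are bundles), semistability of indecomposable bundles on an elliptic curve means $\cF$ and $\cE$ are stable. A nonzero map between stable bundles forces $\mu(\cF) \leq \mu(\cE)$, contradicting the hypothesis $\mu(\cF) > \mu(\cE)$. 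Hence $(\cF,\cE) = 0$. (One must be slightly careful: a nonzero map between semistable bundles of the \emph{same} slope need not be injective or surjective, but the inequality on slopes is what we need, and that is immediate from semistability applied to the image sheaf.)

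Next I would compute $\chi(\cE^*\otimes\cF) = \dim(\cE,\cF) - \dim {}^1(\cE,\cF)$. By Serre duality on the elliptic curve $X$, $\omega_X \cong \cO_X$, so ${}^1(\cE,\cF) \cong (\cF,\cE)^*$, which we have just shown is zero. Therefore $\dim(\cE,\cF) = \chi(\cE^*\otimes\cF)$. By Riemann--Roch on a curve of genus one, $\chi(\cE^*\otimes\cF) = \deg(\cE^*\otimes\cF)$. Finally, $\deg(\cE^*\otimes\cF) = \rank(\cE)\deg(\cF) - \rank(\cF)\deg(\cE) = \rank(\cE)\rank(\cF)\big(\mu(\cF)-\mu(\cE)\big)$, using the standard identity $\deg(\cV\otimes\cW) = \rank(\cV)\deg(\cW) + \rank(\cW)\deg(\cV)$ and $\deg \cE^* = -\deg\cE$, $\rank\cE^* = \rank\cE$. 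This gives the displayed formula.

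The main obstacle — really the only nontrivial point — is justifying that a nonzero map between these simple bundles respects the slope inequality, i.e. the input that indecomposable bundles on an elliptic curve are semistable. This is a classical fact (Atiyah, or Tu), and since the paper already invokes Atiyah's classification and Rudakov's cohomology computations, it is fair to cite it. Everything else is routine: Serre duality with trivial canonical bundle, Riemann--Roch in genus one, and the multiplicativity of degree under tensor product. I would present the argument in essentially the three steps above: (1) $(\cF,\cE)=0$ by semistability; (2) $\dim(\cE,\cF) = \chi(\cE^*\otimes\cF)$ by Serre duality and step (1); (3) evaluate $\chi$ by Riemann--Roch and expand the degree.
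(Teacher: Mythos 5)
Your proof is correct, but it follows a genuinely different route from the paper's. You prove $(\cF,\cE)=0$ from (semi)stability — a nonzero map between stable bundles forces $\mu(\cF)\leq\mu(\cE)$ via the image sheaf — and then compute $\dim(\cE,\cF)$ as an Euler characteristic: Serre duality with $\omega_X\cong\cO_X$ gives $^1(\cE,\cF)\cong(\cF,\cE)^*=0$, and Riemann--Roch in genus one gives $\chi(\cE^*\otimes\cF)=\deg(\cE^*\otimes\cF)$, which expands to the stated formula. The paper argues instead through Atiyah's structure theory of tensor products: it decomposes $\cE^*\otimes\cF\simeq\cL\otimes\cV$ with $\cL$ a sum of line bundles (Atiyah's Lemma 33), uses his Lemmas 22 and 23 on endomorphism sheaves to conclude that all indecomposable summands of $\cE^*\otimes\cF$ have the same, hence positive, degree, and then sums Rudakov's fact that an indecomposable bundle of positive degree $d$ has $h^0=d$. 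Your approach is shorter and uses only standard tools (semistability of indecomposables — which is exactly Proposition~\ref{prop:stableBundles}, cited to Tu, though the paper only states it \emph{after} this lemma — plus Serre duality and Riemann--Roch), and it avoids Atiyah's tensor-product lemmas entirely; the paper's approach does not need stability at this point and yields slightly more structural information about the summands of $\cE^*\otimes\cF$, but for the lemma as stated both arguments are complete and the conclusion is the same.
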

\begin{proof}
Note that $(\cE,\cF) = H^0(\cE^* \otimes \cF)$. Now \cite[Lemma~33]{Ati} shows that
\begin{equation}  \label{eq:dimHoms}
\cE^* \otimes \cF \simeq \cL \otimes \cV
\end{equation}
where $\cV$ is an indecomposable bundle and $\cL$ is a direct sum of line bundles $\cL_i$. We take sheaves of endomorphism rings of both sides. The left hand side gives a direct sum of torsion line bundles by \cite[Lemma~22]{Ati} whilst the right hand side is a tensor product of $\End \cL = \oplus_{i,j} \cL_i^* \otimes \cL_j$ and $\End \cV$. However, as proved in \cite[Lemma~23]{Ati}, $\End \cV$ is the tensor product of a direct sum of torsion line bundles, and a direct sum of self-extensions of $\cO_X$. It follows that all the $\cL_i$ have the same degree, so the same is true of all indecomposable summands of $\cE^* \otimes \cF$. These must all be positive since the degree of $\cE^* \otimes \cF$ is $\rank(\cF)\rank(\cE) (\mu(\cF) - \mu(\cE))>0$. The lemma now follows from Serre duality and the fact \cite[Lemma~2(3)]{rudakov2} that, any indecomposable bundle $\cL_i \otimes \cV$ with positive degree $d$ has $\dim H^0(\cL_i \otimes \cV) = d$.
\end{proof}

The following well-known result is proven in \cite[Appendix A]{tuelliptic}:
\begin{proposition} \label{prop:stableBundles}
Every indecomposable vector bundle on $X$ is semistable; it is stable if and only if it is simple.
\end{proposition}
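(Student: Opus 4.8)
The plan is to treat the two assertions in turn; the only geometric input will be that $X$ has trivial canonical bundle, so Serre duality on the curve reads $\dim\Ext^{1}(\cE,\cF)=\dim\Hom(\cF,\cE)$ for coherent sheaves $\cE,\cF$ on $X$. Beyond this I will use only the standard properties of Harder-Narasimhan and Jordan-H\"older filtrations of sheaves on a smooth projective curve.

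First I would prove that an indecomposable bundle $\cE$ is semistable. Suppose not, and let $0=\cE_{0}\subset\cE_{1}\subset\cdots\subset\cE_{\ell}=\cE$ be its Harder-Narasimhan filtration, with $\ell\geq 2$ and semistable quotients $\cG_{i}:=\cE_{i}/\cE_{i-1}$ of slopes $\mu_{1}>\cdots>\mu_{\ell}$. I claim the exact sequence
$$
0\longrightarrow\cE_{\ell-1}\longrightarrow\cE\longrightarrow\cG_{\ell}\longrightarrow 0
$$
splits, which is absurd since both outer terms are nonzero. Indeed a nonzero map $\cE_{\ell-1}\to\cG_{\ell}$ would have image of slope $\geq\mu_{\min}(\cE_{\ell-1})=\mu_{\ell-1}$, being a quotient of $\cE_{\ell-1}$, and of slope $\leq\mu(\cG_{\ell})=\mu_{\ell}$, being a subsheaf of the semistable bundle $\cG_{\ell}$; as $\mu_{\ell-1}>\mu_{\ell}$ this is impossible, so $\Hom(\cE_{\ell-1},\cG_{\ell})=0$, and hence $\Ext^{1}(\cG_{\ell},\cE_{\ell-1})=0$ by Serre duality.

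Next, that a stable bundle $\cE$ is simple is the usual argument, valid on any smooth projective curve: a nonzero endomorphism must be both injective and surjective (otherwise its image would simultaneously be a proper subsheaf and a proper quotient of $\cE$, contradicting the two equivalent formulations of stability), and an injective or surjective endomorphism of a vector bundle on a curve is an isomorphism, its cokernel or kernel having rank and degree zero; hence $\End\cE$ is a finite-dimensional division algebra over the algebraically closed field $k$ and so equals $k$. Since a simple sheaf is automatically indecomposable, it remains to show that an indecomposable bundle $\cE$ that is not stable is not simple. By the first part $\cE$ is semistable, of slope $\mu$ say, and I would work in the finite-length abelian category $\mathcal{C}_{\mu}$ of semistable bundles of slope $\mu$, whose simple objects are the stable bundles of slope $\mu$. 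The plan is to show (see below) that all Jordan-H\"older factors of $\cE$ are isomorphic to a single stable bundle $\cG$; granting this, let $s$ be the length of a Jordan-H\"older filtration of $\cE$, so that its first step gives an inclusion $\cE_{1}\cong\cG\hookrightarrow\cE$ and its last quotient a surjection $\cE\twoheadrightarrow\cE/\cE_{s-1}\cong\cG$. Then the composite $\cE\twoheadrightarrow\cG\xrightarrow{\sim}\cE_{1}\hookrightarrow\cE$ is a nonzero endomorphism with image $\cE_{1}$, which is invertible exactly when $s=1$, i.e.\ when $\cE$ is stable; if $\cE$ is not stable it is a nonzero non-invertible element of $\End\cE$, so $\cE$ is not simple.

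The step I expect to be the main obstacle, and the only place where the ellipticity of $X$ is essential, is the isotypicity of the Jordan-H\"older factors of an indecomposable semistable bundle. The key is that $\Ext^{1}$ vanishes between non-isomorphic simple objects of $\mathcal{C}_{\mu}$: a nonzero map between stable bundles of equal slope is an isomorphism, so for non-isomorphic stable $\cF,\cG$ of slope $\mu$ one has $\Hom(\cG,\cF)=0$ and hence $\Ext^{1}(\cF,\cG)=0$ by Serre duality (this fails in genus $>1$). It follows that $\mathcal{C}_{\mu}$ decomposes as a direct product of blocks indexed by the isomorphism classes of its simple objects, so an indecomposable object lies in a single block, i.e.\ has all Jordan-H\"older factors isomorphic; concretely, with $\cG$ the bottom Jordan-H\"older factor of an indecomposable $\cE$, one lets $\cE'\subseteq\cE$ be the largest subobject all of whose Jordan-H\"older factors are $\cong\cG$, observes that $\cE/\cE'$ then has no Jordan-H\"older factor $\cong\cG$ so that $\Ext^{1}(\cE/\cE',\cE')=0$ by d\'evissage from the displayed vanishing, and concludes $\cE\cong\cE'\oplus\cE/\cE'$, whence $\cE/\cE'=0$ because $\cE'\supseteq\cE_{1}\neq 0$ and $\cE$ is indecomposable.
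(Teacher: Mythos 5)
Your proof is correct. Note that the paper does not actually prove Proposition~\ref{prop:stableBundles}: it is quoted with a citation to Tu's appendix, so there is no in-text argument to compare against; what you have written is a correct self-contained proof along the standard lines of that reference. The two inputs you isolate are exactly the right ones: triviality of $\omega_X$, so that Serre duality converts the $\Hom$-vanishing forced by the Harder--Narasimhan slopes into $\Ext^{1}(\cG_{\ell},\cE_{\ell-1})=0$ and splits the top step of the HN filtration (semistability of indecomposables), and again converts $\Hom(\cG,\cF)=0$ for non-isomorphic stables of equal slope into $\Ext^{1}(\cF,\cG)=0$, which yields the block/isotypic decomposition of the finite-length category $\mathcal{C}_{\mu}$ and hence the circular endomorphism $\cE\twoheadrightarrow\cG\hookrightarrow\cE$ showing a non-stable indecomposable is not simple. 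The only place where your write-up is terser than it should be is the claim that $\cE/\cE'$ has no Jordan--H\"older factor isomorphic to $\cG$: this is not a formal consequence of the maximality of $\cE'$ (it fails in categories where $\Ext^{1}$ between distinct simples is nonzero), but follows from the same vanishing you already established, via an exchange argument that uses $\Ext^{1}(\cG,S)=0$ for $S\not\cong\cG$ to move any $\cG$-factor to the bottom of a Jordan--H\"older filtration of $\cE/\cE'$, producing a subobject that would enlarge $\cE'$. Adding that one line closes the argument; otherwise the proof is complete and is essentially the argument of the cited appendix.
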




We introduce some alternative terminology for left mutability which seems more appropriate in our present context.
\begin{definition}  \label{def:Egen}
Let $\cE \in \Coh(X)$. A coherent sheaf $\cF$ on $X$ is said to be {\em generated by $\cE$} or {\em $\cE$-generated} if there exists a surjection of the form $\cE^{\oplus m} \twoheadrightarrow \cF$ or equivalently, the canonical evaluation morphism $(\cE,\cF) \otimes \cE \to \cF$ is surjective.
\end{definition}

We consider the question: given a stable bundle $\cE$, when is a stable bundle $\cF$ generated by $\cE$. The most obvious necessary condition is $(\cE,\cF) \neq 0$ and then that $\dim_k(\cE,\cF) \rank(\cE) > \rank(\cF)$. Hence we fix a stable bundle $\cE$ and simple sheaf $\cF$ with $(\cE,\cF) \neq 0$.

\begin{proposition}  \label{prop:maxEgen}
Consider a stable bundle $\cE$ and bundle $\cF$ with $(\cE,\cF) \neq 0$. There exists a stable subsheaf $\cG<\cF$ which is generated by $\cE$ and satisfies the following properties.
\begin{enumerate}
    \item If $\cF'$ is any $\cE$-generated subsheaf of $\cF$, then $\mu(\cF') \leq \mu(\cG)$.
    \item The evaluation morphism $(\cG,\cF) \otimes \cG \hookrightarrow \cF$ is injective.
    \item If $\cG$ is any $\cE$-generated subsheaf of $\cF$ satisfying the maximal slope property in (1), then either $\mu(\cG) > \mu(\cE)$ or $\cG = \cE$.
\end{enumerate}
\end{proposition}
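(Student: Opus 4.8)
The plan is to first isolate the largest slope available. Set $\cF_{1}:=\operatorname{im}\bigl((\cE,\cF)\otimes\cE\to\cF\bigr)$, the subsheaf of $\cF$ generated by the images of all morphisms $\cE\to\cF$; then $\cF_{1}$ is $\cE$-generated, every $\cE$-generated subsheaf of $\cF$ is contained in $\cF_{1}$, and the restriction $(\cE,\cF_{1})\to(\cE,\cF)$ is an isomorphism since every morphism $\cE\to\cF$ factors through $\cF_{1}$. Every $\cE$-generated sheaf is a quotient of some $\cE^{\oplus m}$, which is semistable of slope $\mu(\cE)$ because $\cE$ is stable, so every $\cE$-generated subsheaf of $\cF$ has slope $\geq\mu(\cE)$; being a subsheaf of the fixed bundle $\cF$, it also has slope $\leq\mu_{\max}(\cF)$. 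Slopes of subsheaves of $\cF$ in a bounded range have bounded denominators, hence take finitely many values, so there is a largest slope $\mu_{0}$ attained by an $\cE$-generated subsheaf of $\cF$.

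Next I would construct the stable $\cG$. Being a torsion-free subsheaf of the bundle $\cF$ on a smooth curve, $\cF_{1}$ is locally free; write $\cF_{1}=\bigoplus_{i}\cN_{i}$ with $\cN_{i}$ indecomposable. Each $\cN_{i}$ is a direct summand of the $\cE$-generated $\cF_{1}$, hence $\cE$-generated, and by Proposition~\ref{prop:stableBundles} it is semistable. Since every $\cE$-generated subsheaf of $\cF$ lies in $\cF_{1}=\bigoplus_{i}\cN_{i}$, its slope is at most $\max_{i}\mu_{\max}(\cN_{i})=\max_{i}\mu(\cN_{i})$, while the reverse inequality holds because each $\cN_{i}$ is itself $\cE$-generated; thus $\mu_{0}=\max_{i}\mu(\cN_{i})$. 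Fix $i_{0}$ with $\mu(\cN_{i_{0}})=\mu_{0}$. By Atiyah's classification of indecomposable bundles on $X$ \cite{Ati}, $\cN_{i_{0}}$ carries a filtration whose successive quotients are all isomorphic to a single stable bundle $\cL$ of slope $\mu_{0}$; in particular $\cN_{i_{0}}$ has a subsheaf isomorphic to $\cL$ and a quotient isomorphic to $\cL$. The quotient gives a surjection $\cE^{\oplus N}\twoheadrightarrow\cN_{i_{0}}\twoheadrightarrow\cL$, so $\cL$ is $\cE$-generated; the subsheaf gives an embedding $\cL\hookrightarrow\cN_{i_{0}}\hookrightarrow\cF$. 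I take $\cG$ to be this copy of $\cL$ in $\cF$, which is then stable, $\cE$-generated (an intrinsic property of $\cL$, transported from the quotient copy), and of slope $\mu_{0}$. I would also record the boundary lemma used below: if $\cE$ is stable and $\cF'$ is $\cE$-generated with $\mu(\cF')=\mu(\cE)$, then $\cF'\cong\cE^{\oplus k}$; indeed, writing $\cF'$ as a quotient of $\cE^{\oplus m}$, additivity of degree and rank forces the kernel $\cK$ to have slope $\mu(\cE)$, so $\cK$ is a saturated subsheaf of $\cE^{\oplus m}$ of equal slope, and since a nonzero morphism of stable bundles of equal slope is an isomorphism, the socle of $\cK$ — and then, inductively, $\cK$ itself — has the form $\cE\otimes W$, whence $\cF'\cong\cE\otimes(k^{m}/W)$.

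Then I would verify the three properties. Property (1) is immediate, since any $\cE$-generated subsheaf $\cF'$ of $\cF$ lies in $\cF_{1}$ and so has slope $\leq\mu_{0}=\mu(\cG)$. For (2), let $\cH\subseteq\cF$ be the image of $(\cG,\cF)\otimes\cG\to\cF$; as $\cG$ is $\cE$-generated so is $\cH$ (a quotient of a power of $\cG$), giving $\mu(\cH)\leq\mu_{0}$, while $\cH$ is a quotient of the semistable $\cG^{\oplus\dim(\cG,\cF)}$ of slope $\mu_{0}$, giving $\mu(\cH)\geq\mu_{0}$; hence $\mu(\cH)=\mu(\cG)$, and the boundary lemma (with $\cG$ in place of $\cE$) yields $\cH\cong\cG^{\oplus k}$. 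Since every morphism $\cG\to\cF$ factors through $\cH$ we get $k=\dim(\cG,\cH)=\dim(\cG,\cF)$, so the evaluation is a surjection $\cG^{\oplus\dim(\cG,\cF)}\twoheadrightarrow\cH\cong\cG^{\oplus\dim(\cG,\cF)}$ of sheaves of equal rank on a curve, hence an isomorphism. For (3), if $\mu(\cG)=\mu(\cE)$ then the boundary lemma gives $\cG\cong\cE^{\oplus k}$, and indecomposability of the stable $\cG$ forces $k=1$, i.e.\ $\cG\cong\cE$.

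The main obstacle is getting $\cG$ genuinely \emph{stable}: the naive choices (the maximal destabilizing subsheaf of $\cF_{1}$, or a rank-minimal $\cE$-generated subsheaf of slope $\mu_{0}$) are only semistable in general and can fail to be stable, as shows $\cF_{2}\otimes\cE\hookrightarrow\cF$. This is exactly where the elliptic curve enters: Atiyah's theorem makes each indecomposable summand $\cN_{i_{0}}$ of $\cF_{1}$ have both a sub-object and a quotient-object isomorphic to one and the same stable bundle, so the stable sub-object inherits $\cE$-generation from the quotient. A secondary point needing care is that if (3) is intended for \emph{every} maximal-slope $\cE$-generated subsheaf rather than the constructed one, ruling out $\cE^{\oplus k}\hookrightarrow\cF$ with $k\geq 2$ as such a subsheaf would also use that $\cF$ is indecomposable.
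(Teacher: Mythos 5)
Your construction is precisely the one in the paper: form $\cF_1=\Gamma:=\operatorname{im}\bigl((\cE,\cF)\otimes\cE\to\cF\bigr)$, decompose it into indecomposable summands, take the stable Jordan--H\"older factor $\cG$ of a maximal-slope summand (which Atiyah's classification supplies as both a subsheaf and a quotient of that summand), and observe that $\cG$, being a quotient of the $\cE$-generated $\cF_1$, is itself $\cE$-generated. Parts (1) and (3) then run exactly as in the paper (for (3) the paper also treats $\cG$ as the constructed \emph{stable} subsheaf, so that a nonzero map $\cE\to\cG$ of equal-slope stable bundles is an isomorphism; your worry about $\cE^{\oplus k}$ does not arise, and the paper makes no appeal to indecomposability of $\cF$). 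The only genuine divergence is in (2): the paper computes $(\cG,\cF)=(\cG,\Gamma)=\bigoplus_{i\in I}(\cG,\Gamma_i)\cong k^{|I|}$ explicitly from Atiyah's description of the $\Gamma_i$ as iterated self-extensions of $\cG$ and identifies the image of evaluation with the sum of socles, whereas you instead apply your boundary lemma with $\cG$ in place of $\cE$ to get the image $\cH\cong\cG^{\oplus k}$ and then compare ranks. Both are correct; your route is a touch more self-contained since it sidesteps the $\Hom$ computation against Atiyah bundles, while your boundary lemma is most transparently justified by working in the abelian category of semistable bundles of slope $\mu(\cE)$, where $\cE^{\oplus m}$ is semisimple with simple summand $\cE$, so the saturated kernel $\cK$ is automatically a direct sum of copies of $\cE$ -- a cleaner phrasing than the socle induction you sketch.
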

\begin{definition} \label{def:maxEgen}
We will call the sheaf $\cG$ in the proposition, a {\em maximal slope $\cE$-generated stable subsheaf of $\cF$}
\end{definition}
\begin{proof}
Let $\Gamma = \textup{im}((\cE,\cF) \otimes \cE \to \cF)$. We can decompose $\Gamma = \oplus_{i=1}^m \Gamma_i$ where the $\Gamma_i$ are indecomposable bundles and hence, by Atiyah's classification \cite{Ati} of indecomposable bundles on elliptic curves and Proposition~\ref{prop:stableBundles}, semistable with say non-decreasing slope. We also know that $\Gamma_m$ is the iterated self-extension of a stable subsheaf, say $\cG$. Furthermore, since $\cG$ is isomorphic to a quotient of $\Gamma$, it is also $\cE$-generated.

We prove part (1) and consider an $\cE$-generated subsheaf $\cF'<\cF$. The commutative diagram
\[\begin{CD}
(\cE,\cF') \otimes \cE @>>> \cF'\\
@VVV @VVV \\
(\cE,\cF) \otimes \cE @>>> \cF
\end{CD}\]
shows that $\cF'$ is a subsheaf of $\Gamma$, and since all the indecomposable summands of $\Gamma$ have slope bounded above by $\mu(\cG)$, we see that $\mu(\cF') \leq \mu(\cG)$.

We now prove part (2). Consider the commutative diagram
\[
\begin{CD}
(\cG,\cF)\otimes (\cE,\cG) \otimes E @>{\varepsilon_\cG}>> (\cG,\cF) \otimes \cG \\
@V{\phi}VV @VVV \\
(\cE,\cF) \otimes \cE @>>> \cF
\end{CD}
\]
where $\phi$ is induced by composition of sheaf homomorphisms and the others come from evaluation. Surjectivity of $\varepsilon_\cG$ in the commutative diagram above shows that every homomorphism $\cG \to \cF$ factors through $\Gamma < \cF$, that is $(\cG,\cF) = (\cG, \Gamma)$. Now $\cG$ has maximal slope amongst all the components $\Gamma_i$. Let $I \subseteq \{1,\ldots,m\}$ be the indices where $\Gamma_i$ are iterated self-extensions of $\cG$. Then $(\cG,\cF) = \oplus_{i \in I} (\cG,\Gamma_i) = k^{|I|}$ so the evalution map $(\cG,\cF) \otimes \cG \to \cF$ just identifies $(\cG,\cF) \otimes \cG$ with the direct sum of the copy of $\cG$ in each $\Gamma_i, i \in I$.

It remains to prove (3). Since $\cG$ is $\cE$-generated, $(\cE,\cG) \neq 0$ and $\mu(\cE) \leq \mu(\cG)$. Let $\phi \colon \cE \to \cG$ be a non-zero morphism. If $\mu(\cE) = \mu(\cG)$, then stability ensures that $\phi$ is an isomorphism.
\end{proof}

We have the following dichotomy regarding the evaluation morphism $\varepsilon\colon (\cE,\cF) \otimes \cE \to \cF$.

\begin{theorem} \label{thm:evalDichotomy}
Let $\cE,\cF$ be stable bundles with $\mu(\cE) < \mu(\cF)$. Then exactly one of the following occurs. Either
\begin{enumerate}
    \item $\dim (\cE,\cF) \rank(\cE) \leq \rank(\cF)$ in which case $(\cE,\cF) \otimes \cE \hookrightarrow \cF$ is injective or,
    \item $\dim (\cE,\cF) \rank(\cE) > \rank(\cF)$ in which case $(\cE,\cF) \otimes \cE \twoheadrightarrow \cF$ is surjective.
\end{enumerate}
\end{theorem}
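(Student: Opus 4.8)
Here is how I would approach the dichotomy. Throughout, write $r=\rank(\cE)$, $s=\rank(\cF)$ and $N=\dim(\cE,\cF)$, so that $N=rs(\mu(\cF)-\mu(\cE))>0$ by Lemma~\ref{lem:dimHoms}; in particular $(\cE,\cF)\neq 0$, and Proposition~\ref{prop:maxEgen} provides a maximal slope $\cE$-generated stable subsheaf $\cG\subseteq\cF$. The ``exactly one'' part is automatic, since $\dim(\cE,\cF)\rank(\cE)\le\rank(\cF)$ and $\dim(\cE,\cF)\rank(\cE)>\rank(\cF)$ are mutually exclusive and exhaust all cases; so the content is the two implications. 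My plan is to prove that $\cG$ must equal $\cE$ or $\cF$, and then observe that these two possibilities correspond exactly to alternatives (1) and (2).

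The two extreme cases are quick. If $\cG=\cF$, then $\cF$ is $\cE$-generated, so the evaluation $\varepsilon\colon(\cE,\cF)\otimes\cE\to\cF$ is surjective; since a surjection of bundles of equal rank on a smooth curve is an isomorphism, while $\cF$ is stable (hence indecomposable) and $\not\cong\cE$ because $\mu(\cF)>\mu(\cE)$, the map $\varepsilon$ cannot be an isomorphism, so $Nr=\rank(\cE^{\oplus N})>s$, which is alternative (2). If instead $\cG=\cE$, then $\varepsilon$ is injective by Proposition~\ref{prop:maxEgen}(2), so $Nr\le s$, which is alternative (1). By Proposition~\ref{prop:maxEgen}(3) the only remaining possibility is $\mu(\cE)<\mu(\cG)<\mu(\cF)$, and ruling this out is the crux.

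So assume $\mu(\cE)<\mu(\cG)<\mu(\cF)$ and aim for a contradiction. Put $\Gamma=\operatorname{im}\varepsilon\subseteq\cF$; every morphism $\cE\to\cF$ factors through $\Gamma$, so $\dim(\cE,\Gamma)=N$, and $\Gamma$ is $\cE$-generated. Decompose $\Gamma=\bigoplus_i\Gamma_i$ into indecomposable --- hence semistable, by Proposition~\ref{prop:stableBundles} --- summands, each a quotient of $\cE^{\oplus N}$, so that $\mu(\cE)\le\mu(\Gamma_i)\le\mu(\cG)$ (the upper bound being part of the proof of Proposition~\ref{prop:maxEgen}(1)). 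For a summand with $\mu(\Gamma_i)>\mu(\cE)$, (semi)stability forces $\Hom(\Gamma_i,\cE)=0$, hence $\Ext^1(\cE,\Gamma_i)=0$ by Serre duality, so Riemann--Roch gives $\dim(\cE,\Gamma_i)=\deg(\cE^{*}\otimes\Gamma_i)=r\,\rank(\Gamma_i)(\mu(\Gamma_i)-\mu(\cE))\le r\,\rank(\Gamma_i)(\mu(\cG)-\mu(\cE))$. For a summand with $\mu(\Gamma_i)=\mu(\cE)$, a Jordan--H\"older estimate, using that every stable bundle of slope $\mu(\cE)$ has rank $r$ by Atiyah's classification together with $\cE$-generation of $\Gamma_i$, gives $\dim(\cE,\Gamma_i)=\rank(\Gamma_i)/r$. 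Writing $g_0$ for the total rank of the slope-$\mu(\cE)$ summands and $g_{+}=\rank(\Gamma)-g_0\le s-g_0$ for the rest, summing these estimates gives
$$
rs(\mu(\cF)-\mu(\cE))=N\le \frac{g_0}{r}+r\,g_{+}(\mu(\cG)-\mu(\cE)).
$$
Finally, since $\cG$ is $\cE$-generated with $\dim(\cE,\cG)=r\,\rank(\cG)(\mu(\cG)-\mu(\cE))$ by Lemma~\ref{lem:dimHoms}, comparing ranks in the surjection $\cE^{\oplus\dim(\cE,\cG)}\twoheadrightarrow\cG$ forces $r^{2}(\mu(\cG)-\mu(\cE))\ge 1$; multiplying the displayed inequality by $r$, bounding $g_{+}$ by $s-g_0$, and invoking $r^{2}(\mu(\cG)-\mu(\cE))\ge 1$ makes the $g_0$-contribution nonpositive and leaves $\mu(\cF)\le\mu(\cG)$, contradicting $\mu(\cG)<\mu(\cF)$.

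Everything except the last paragraph is bookkeeping with Proposition~\ref{prop:maxEgen} and Lemma~\ref{lem:dimHoms}; the step I expect to demand the most care is the final slope inequality, and, as a subsidiary matter, the verification that an indecomposable $\cE$-generated semistable bundle of slope exactly $\mu(\cE)$ contributes exactly $\rank/r$ (and not more) to $\dim(\cE,-)$, which rests on Atiyah's classification of bundles on the elliptic curve.
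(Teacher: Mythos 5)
Your argument is correct and takes a genuinely different route from the paper's. Both proofs reduce, via Proposition~\ref{prop:maxEgen}, to ruling out the intermediate case in which the maximal-slope $\cE$-generated stable subsheaf $\cG$ satisfies $\mu(\cE)<\mu(\cG)<\mu(\cF)$, but they do so differently. The paper constructs a filtration $0<\cF_1<\cF_2<\cdots<\cF_s=\cF$ whose subquotients $\bar{\cF}_i$ are $\cE$-generated of slope $>\mu(\cE)$, using Rudakov's result that $\operatorname{coker}\varepsilon$ is simple to restart the construction at each stage, and then lifts generators up the filtration via $\Ext^1(\cE,\cF_{i-1})=0$ to conclude that $\varepsilon$ is surjective. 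You instead perform a one-shot numerical estimate on the indecomposable summands of $\Gamma=\operatorname{im}\varepsilon$, bounding each $\dim(\cE,\Gamma_i)$ via $\chi=\deg$ on the elliptic curve, and combining with $r^{2}(\mu(\cG)-\mu(\cE))\ge 1$ to force $\mu(\cF)\le\mu(\cG)$, a contradiction; this avoids the iterated-cokernel bookkeeping at the cost of some Riemann--Roch/Atiyah arithmetic. One small imprecision: for a slope-$\mu(\cE)$ summand $\Gamma_i$ (which is an iterated self-extension of $\cE$ by Atiyah's classification), one in fact has $\dim(\cE,\Gamma_i)=1$, not in general $\rank(\Gamma_i)/r$, because the connecting maps $\Hom(\cE,\cE)\to\Ext^1(\cE,\cE)$ along the filtration are nonzero; but since your estimate only uses $\dim(\cE,\Gamma_i)\le\rank(\Gamma_i)/r$, which does hold by the Jordan--H\"older bound, the inequality and hence the whole argument go through unchanged.
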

\begin{proof}
It suffices to show that the evaluation morphism $\varepsilon$ is either injective or surjective. Let $\cF_1<\cF$ be a maximal slope $\cE$-generated stable subsheaf of $\cF$. For later use, we also let $\bar{\cF}_1 = \cF_1$.  Proposition~\ref{prop:maxEgen}(2),(3) show that either a) $\mu(\cE) = \mu(\cF_1)$ in which case $\varepsilon$ is injective, or b) $\mu(\cF_1) > \mu(\cE)$. We may assume the latter case holds. We are also done if $\varepsilon$ is surjective, so suppose instead it has a non-zero cokernel $\cC$. From \cite[Lemma~3(2)]{rudakov2}, $\cC$ is also a simple sheaf so is either a skyscraper sheaf which is certainly $\cE$-generated, or a simple bundle with $\mu(\cC) > \mu(\cF) > \mu(\cF_1)$. It follows from Lemma~\ref{lem:dimHoms} that $(\cF_1,\cC) \neq 0$ and we may construct $\bar{\cF}_2$, a maximal slope $\cF_1$-generated stable subsheaf of $\cC$. Note $\bar{\cF}_2$ is also $\cE$-generated since $\cF_1$ is. Let $\cF_2$ be the pre-image of $\bar{\cF}_2$ in $\cF$. We continue inductively constructing a filtration
$$
0 < \cF_1 < \cF_2 < \ldots < \cF_s = \cF
$$
whose successive quotients $\bar{\cF}_i$ are all generated by $\cE$ (and even $\cF_1$) and have slope strictly greater than $\mu(\cE)$ (it will be infinite if $\bar{\cF}_i$ is a skyscraper sheaf). In particular, we have vanishing Ext groups $ \ ^1(\cE,\bar{\cF}_i) = 0$ and hence also $\ ^1(\cE,\cF_i) = 0$.

Let $\Gamma = \textup{im}(\varepsilon \colon (\cE,\cF) \otimes \cE \to \cF)$. It suffices to show by induction on $i$ that $\cF_i$ is $\cE$-generated so lies in $\Gamma$. The case $i=1$ follows from the fact that $\cF_1$ is $\cE$-generated. Consider the exact sequence
$$
0 \to \cF_{i-1} \longrightarrow \cF_i \longrightarrow \bar{\cF}_i \to 0.
$$
Applying $\Hom(\cE,?)$ and using the fact that $\,^1(\cE,\cF_{i-1}) = 0$ we see that any homomorphism $\cE \to \bar{\cF}_i$ lifts to $\cF_i$. Now $\bar{\cF}_i$ is $\cE$-generated so we can find a surjection $\bar{\psi} \colon \cE^s \twoheadrightarrow \bar{\cF}_i$ and lift it to $\psi \colon \cE^s \to \cF_i$. Then $\cF_i = \cF_{i-1} + \textup{im}\,\psi$ which is clearly $\cE$-generated by induction, and hence lies in $\Gamma$.
\end{proof}

\section{Construction of Helices on Elliptic Curves}

Fix a smooth elliptic curve $X$ over an algebraically closed field $k$ of characteristic zero. In this section, we study the
\begin{question}
How do you construct an elliptic helix of period 3 in $\Coh(X)$?
\end{question}
By Serre duality, the elliptically exceptional objects in $\Coh(X)$ are the simple bundles and skyscraper sheaves. Since right mutating past a skyscraper sheaf produces 0, an elliptic helix $\underline{\cL}$ of period 3 can only have simple bundles.
\begin{remark}  \label{rem:onlyCheckMutations}
Suppose now that $\underline{\cL}$ is a sequence of simple bundles for which the ``helical'' property~(2) of Definition~\ref{def:ellipticHelix} holds. Since $\cL_i$ left mutates through $\cL_{i-1}$ and both are stable, we must have $\mu(\cL_{i-1}) < \mu(\cL_i)$. Hence, the slopes are strictly increasing and by stability, $^1(\cL_i,\cL_j) = 0$ for $i>j$. Thus property~(1) of Definition~\ref{def:ellipticHelix} follows from axiom~(2) in this case.
\end{remark}

We thus concentrate on the helical property~(2) which implies in particular that $\underline{\cL}$ is completely determined by a ``thread'' $(\cL_0,\cL_1,\cL_2)$ of simple bundles of increasing slope. Conversely, given such a triple, one can try to generate a helix by left and right mutating, though in general, the process may terminate. Actually it will be more convenient to start equivalently, with the triple $(\cL_0,L_{\cL_1}\cL_2, \cL_1)$. We thus make the following

\begin{definition}  \label{def:triad}
A {\it triad} is a triple $T=(\mathcal{A},\mathcal{B}, \mathcal{C})$ of simple bundles on $X$ of increasing slope. If the slopes are $\mu_0, \mu'_1, \mu_1$, then we say $T$ is a {\it $(\mu_0, \mu'_1, \mu_1)$-triad}. We say that $T$ is {\it right mutable} if $\cA$ and $\cB$ both right mutate past $\cC$ in which case, the {\it right mutation of $T$} is the triple
$$
RT := (\mathcal{C}, R_{\cC} \cA,R_{\cC} \cB).
$$
Left mutability and mutations are defined similarly.
\end{definition}
Hence if $\underline{\cL}$ is an elliptic helix of period 3 in $\Coh(X)$, then the triple $(\cL_0,L_{\cL_1}\cL_2, \cL_1)$ is right mutable and its right mutation $(\cL_1,L_{\cL_2}\cL_3, \cL_2)$ is again a triad.

\begin{definition}  \label{def:partialEllipticHelix}
A {\it partial elliptic helix of period 3} is a set of simple bundles $\cL_i, i \in I$ where $I$ is an interval of consecutive integers and such that properties~(1) and (2) of Definition~\ref{def:ellipticHelix} hold whenever they make sense. We also say the partial helix is {\it indexed by $I$}.
\end{definition}
The following illustrates the inductive procedure we will employ.
\begin{remark}  \label{rem:triadInduction}
Suppose that $T = (\cL_0,\cL'_1,\cL_1)$ is a right mutable triad such that $RT = (\cL_1,\cL'_2,\cL_2)$ is again a right mutable triad with $R^2T = (\cL_2,\cL'_3,\cL_3)$. Then $\cL_0,\cL_1,\cL_2,\cL_3$ is a partial elliptic helix of period 3 so long as $\cL_3$ is a bundle, since the simplicity of $\cL_3$ follows from \cite[Lemma 3]{rudakov2}.
\end{remark}

Let $\underline{\cL}$ be a partial elliptic helix of period 3 indexed on $I$. We define $\cL'_i$ to be $L_{\cL_i} \cL_{i+1}$ or $R_{\cL_{i-1}} \cL_{i-2}$, they being isomorphic when both are defined. In this case we consider the following {\it numerical invariants of $\underline{\cL}$}.
\begin{equation} \label{eq:dsandrs}
    d_i := \deg \cL_i,\ r_i := \rank \cL_i,\ d'_i := \deg \cL'_i,\ r'_i := \rank \cL'_i.
\end{equation}
The following result helps to compute these numbers and analyse the mutability condition.  Before we state it, we recall from \cite[Definition 1]{rudakov2} that a pair of objects $(\cC, \cD)$ of $\mbox{Coh }X$ is a {\it simple pair} if both $\cC$ and $\cD$ are simple bundles and ${}^{l}(\cC, \cD)$ is zero for all but one value of $l$.

\begin{lemma} \label{lemma.rightmutate}
Let $\mathcal{A}$ and $\mathcal{B}$ be simple bundles on $X$ with ranks $r_{\cA},r_{\cB}$ and degrees $d_{\cA}, d_{\cB}$. Suppose $\mu(\mathcal{A})<\mu(\mathcal{B})$ and $\operatorname{dim}(\mathcal{A},\mathcal{B})r_{\mathcal{B}}>r_{\mathcal{A}}$.  Then
\begin{enumerate}
\item $\dim (\cA, \cB) = \left|
\begin{smallmatrix}
d_{\cB} & d_{\cA} \\
r_{\cB} & r_{\cA}
\end{smallmatrix}\right|$
\item{} $\cA$ right mutates through $\cB$ and $R_{\cB} \cA$ is a bundle,
\item{} $R_{\cB} \cA$ has rank $\operatorname{dim}(\mathcal{A},\mathcal{B})r_{\mathcal{B}} - r_{\mathcal{A}}$ and degree $\operatorname{dim}(\mathcal{A},\mathcal{B})d_{\mathcal{B}} - d_{\mathcal{A}}$, and
\item{} $(\mathcal{B},R_{\cB} \cA)$ is a simple pair with $\mu(\mathcal{B})<\mu(R_{\cB} \cA)$.
\end{enumerate}
\end{lemma}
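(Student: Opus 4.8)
The plan is to read off part~(1) directly from Lemma~\ref{lem:dimHoms}, and to obtain parts~(2)--(4) by dualising, which converts the right-mutation statements into the evaluation-map dichotomy already established in Theorem~\ref{thm:evalDichotomy}. For part~(1): since $\cA,\cB$ are simple bundles with $\mu(\cA)<\mu(\cB)$, Lemma~\ref{lem:dimHoms} gives $\dim(\cA,\cB)=\rank(\cB)\rank(\cA)\bigl(\mu(\cB)-\mu(\cA)\bigr)=d_\cB r_\cA-d_\cA r_\cB$, which is the stated determinant.

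For~(2) and~(3) I would work with the dual bundles $\cA^{*},\cB^{*}$. These are again simple (since $\End\cF^{*}=\End\cF$), hence stable by Proposition~\ref{prop:stableBundles}, and $\mu(\cB^{*})=-\mu(\cB)<-\mu(\cA)=\mu(\cA^{*})$. Dualising bundle homomorphisms gives $(\cB^{*},\cA^{*})\cong(\cA,\cB)$, so $\dim(\cB^{*},\cA^{*})=\dim(\cA,\cB)=:n$ and the hypothesis $n\,r_\cB>r_\cA$ becomes exactly $\dim(\cB^{*},\cA^{*})\rank(\cB^{*})>\rank(\cA^{*})$, which is the hypothesis of case~(2) of Theorem~\ref{thm:evalDichotomy} applied to $(\cB^{*},\cA^{*})$. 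Hence the evaluation morphism $(\cB^{*},\cA^{*})\otimes\cB^{*}\to\cA^{*}$ is surjective, so $\cA^{*}$ left mutates through $\cB^{*}$ and $L_{\cB^{*}}\cA^{*}=\ker$ is a vector bundle. Dualising the short exact sequence $0\to L_{\cB^{*}}\cA^{*}\to(\cB^{*},\cA^{*})\otimes\cB^{*}\to\cA^{*}\to0$ yields an exact sequence $0\to\cA\to{}^{*}(\cA,\cB)\otimes\cB\to(L_{\cB^{*}}\cA^{*})^{*}\to0$ whose first map is the co-evaluation morphism of~(\ref{eq:rightMutate}); in particular that co-evaluation is injective. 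Combined with $\Ext^{j}(\cA,\cB)=0$ for $j\neq0$ --- which holds since $\Ext^{1}(\cA,\cB)\cong\Hom(\cB,\cA)^{*}=0$ by Serre duality ($\omega_{X}\cong\cO_{X}$) and Lemma~\ref{lem:dimHoms}, higher Ext vanishing automatically on a curve --- this is precisely the statement that $\cA$ right mutates through $\cB$, with $R_\cB\cA=(L_{\cB^{*}}\cA^{*})^{*}$ a bundle, proving~(2). Taking ranks and degrees in the displayed exact sequence gives $\rank(R_\cB\cA)=n\,r_\cB-r_\cA$ and $\deg(R_\cB\cA)=n\,d_\cB-d_\cA$, proving~(3).

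For~(4), the inequality $\mu(\cB)<\mu(R_\cB\cA)$ is a short computation: clearing the positive denominators $n\,r_\cB-r_\cA$ and $r_\cB$ reduces $\tfrac{n\,d_\cB-d_\cA}{n\,r_\cB-r_\cA}>\tfrac{d_\cB}{r_\cB}$ to $d_\cB r_\cA>d_\cA r_\cB$, i.e.\ $\mu(\cA)<\mu(\cB)$. That $(\cB,R_\cB\cA)$ is a simple pair I would again obtain by duality: $(\cA,\cB)$ is a simple pair (both simple bundles, and $\Ext^{>0}(\cA,\cB)=0$ as above), hence so is $(\cB^{*},\cA^{*})$; by \cite[Lemma~3]{rudakov2} its mutation $(L_{\cB^{*}}\cA^{*},\cB^{*})$ is again a simple pair, so $L_{\cB^{*}}\cA^{*}$ is a simple bundle, and therefore $R_\cB\cA=(L_{\cB^{*}}\cA^{*})^{*}$ is a simple bundle and $(\cB,R_\cB\cA)$, being the dual of the simple pair $(L_{\cB^{*}}\cA^{*},\cB^{*})$, is a simple pair. (Alternatively, once $R_\cB\cA$ is known to be simple, $\Ext^{1}(\cB,R_\cB\cA)\cong\Hom(R_\cB\cA,\cB)^{*}=0$ follows directly from $\mu(\cB)<\mu(R_\cB\cA)$ and Lemma~\ref{lem:dimHoms}.)

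I do not expect a single hard step: parts~(1), (3) and the slope inequality are routine, and the real content is the observation that dualising turns ``$\cA$ right mutates through $\cB$'' into ``$\cA^{*}$ left mutates through $\cB^{*}$'' with $R_\cB\cA=(L_{\cB^{*}}\cA^{*})^{*}$. The step needing the most care is keeping these conventions straight --- in particular matching the dualised exact sequence with~(\ref{eq:rightMutate}) --- so that Theorem~\ref{thm:evalDichotomy} and the simplicity statement \cite[Lemma~3]{rudakov2}, both phrased on the ``left/evaluation'' side, can legitimately be invoked; \cite[Lemma~3]{rudakov2} is the only input not already available earlier in this section.
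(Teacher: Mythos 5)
Your proposal is correct and follows essentially the same route as the paper: part~(1) from Lemma~\ref{lem:dimHoms}, parts~(2)--(3) by dualising and applying Theorem~\ref{thm:evalDichotomy} to the evaluation map $(\cB^{*},\cA^{*})\otimes\cB^{*}\to\cA^{*}$ so that the dual of its kernel gives $R_{\cB}\cA$, and part~(4) via Serre duality together with \cite[Lemma~3]{rudakov2}. The only cosmetic differences are that you check $\mu(\cB)<\mu(R_{\cB}\cA)$ by direct cross-multiplication (the paper deduces it from $(\cB,R_{\cB}\cA)\neq 0$) and that you invoke Rudakov's lemma on the dual (left-mutation) side, both of which are fine.
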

\begin{proof}
Part~(1) is just a convenient restatement of Lemma~\ref{lem:dimHoms} whilst part~(3) follows from the previous parts and the fact that rank and degree are additive on exact sequences.

We prove part~(2) now.
Consider the evaluation map
$$
(\mathcal{B}^{*}, \mathcal{A^{*}}) \otimes \mathcal{B}^{*} \overset{\operatorname{ev}}{\longrightarrow} \mathcal{A}^{*}.
$$
Our hypotheses ensure $\mu(\mathcal{A}^{*}) > \mu(\mathcal{B}^{*})$ and  $\operatorname{dim}(\mathcal{B}^{*}, \mathcal{A}^{*})\rank \mathcal{B}^{*}>\rank \mathcal{A}^{*}$.  Thus, by Theorem \ref{thm:evalDichotomy}, the map $\operatorname{ev}$ is an epimorphism.  Since its kernel is torsion-free, it is a vector bundle. Taking duals, we conclude that the coevaluation map
$$
\cA \to \,^*(\cA,\cB) \otimes \cB
$$
is injective with cokernel $R_{\cB}\cA$ a bundle.

Finally, to show $(\mathcal{B}, R_{\cB}\cA)$ is a simple pair, it suffices to show $(\mathcal{A}, \mathcal{B})$ is a simple pair by \cite[Lemma 3]{rudakov2}. This follows from Serre duality which gives $^1(\cA,\cB) = (\cB,\cA)^* = 0$ since $\cA, \cB$ are stable bundles with slopes $\mu(\cB) > \mu(\cA)$. Also, since $(\cB, R_{\cB} \cA) \neq 0$, we must also have $\mu(\cB) < \mu(R_{\cB} \cA)$.
\end{proof}

Lemma~\ref{lemma.rightmutate} immediately gives
\begin{proposition}  \label{prop:drRecursionRelations}
The numerical invariants of a partial elliptic helix of period 3 satisfy the following recursion relations.
\begin{equation} \label{eqn.unprimed}
\begin{pmatrix} d_{i}' \\ r_{i}' \end{pmatrix} = \begin{vmatrix} d_{i-1} & d_{i-2} \\ r_{i-1} & r_{i-2} \end{vmatrix} \begin{pmatrix} d_{i-1} \\ r_{i-1} \end{pmatrix} - \begin{pmatrix} d_{i-2} \\ r_{i-2} \end{pmatrix}
\end{equation}
and
\begin{equation} \label{eqn.primed}
\begin{pmatrix} d_{i} \\ r_{i} \end{pmatrix} = \begin{vmatrix} d_{i-1} & d_{i-1}' \\ r_{i-1} & r_{i-1}' \end{vmatrix} \begin{pmatrix} d_{i-1} \\ r_{i-1} \end{pmatrix} - \begin{pmatrix} d_{i-1}' \\ r_{i-1}' \end{pmatrix}.
\end{equation}
\end{proposition}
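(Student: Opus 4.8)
The plan is to obtain both recursions as immediate consequences of Lemma~\ref{lemma.rightmutate}, once each of $\cL'_i$ and $\cL_i$ has been identified as an explicit right mutation of earlier terms of the partial helix.

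For \eqref{eqn.unprimed}, I would start from the identification $\cL'_i = R_{\cL_{i-1}}\cL_{i-2}$ recorded in the discussion preceding Definition~\ref{def:partialEllipticHelix}, and apply Lemma~\ref{lemma.rightmutate} with $\cA = \cL_{i-2}$ and $\cB = \cL_{i-1}$. To invoke the lemma I must check its two hypotheses. The slope inequality $\mu(\cL_{i-2}) < \mu(\cL_{i-1})$ holds because the slopes along a partial elliptic helix are strictly increasing (Remark~\ref{rem:onlyCheckMutations}). The numerical inequality $\dim(\cL_{i-2},\cL_{i-1})\,r_{i-1} > r_{i-2}$ holds because $R_{\cL_{i-1}}\cL_{i-2} = \cL'_i$ is a nonzero simple bundle: were the reverse inequality to hold, then applying Theorem~\ref{thm:evalDichotomy} to the dual pair $(\cL_{i-1}^{*}, \cL_{i-2}^{*})$ exactly as in the proof of Lemma~\ref{lemma.rightmutate}(2) would force this right mutation to vanish. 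Granting the hypotheses, Lemma~\ref{lemma.rightmutate}(1) identifies $\dim(\cL_{i-2},\cL_{i-1})$ with $\left|\begin{smallmatrix} d_{i-1} & d_{i-2} \\ r_{i-1} & r_{i-2}\end{smallmatrix}\right|$, and Lemma~\ref{lemma.rightmutate}(3) computes $(d'_i, r'_i)$ as $\dim(\cL_{i-2},\cL_{i-1})\,(d_{i-1}, r_{i-1}) - (d_{i-2}, r_{i-2})$, which is precisely \eqref{eqn.unprimed}.

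For \eqref{eqn.primed}, I would use that the defining exact sequence of $\cL'_{i-1} = L_{\cL_{i-1}}\cL_i$,
$$
0 \to \cL'_{i-1} \to (\cL_{i-1},\cL_i) \otimes \cL_{i-1} \to \cL_i \to 0,
$$
is --- as noted for \eqref{eq:leftMutate} --- simultaneously the coevaluation sequence of the right mutation $R_{\cL_{i-1}}\cL'_{i-1}$, so that $\cL_i \cong R_{\cL_{i-1}}\cL'_{i-1}$. Then I apply Lemma~\ref{lemma.rightmutate} with $\cA = \cL'_{i-1}$ and $\cB = \cL_{i-1}$. The slope hypothesis $\mu(\cL'_{i-1}) < \mu(\cL_{i-1})$ follows from the displayed sequence, since $\mu(\cL_i) > \mu(\cL_{i-1})$ forces the subobject $\cL'_{i-1}$ to have slope strictly below that of the middle term; the numerical hypothesis again follows from $\cL_i \neq 0$ (equivalently, by counting ranks in the displayed sequence). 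Lemma~\ref{lemma.rightmutate}(1),(3) then give $\dim(\cL'_{i-1},\cL_{i-1}) = \left|\begin{smallmatrix} d_{i-1} & d'_{i-1} \\ r_{i-1} & r'_{i-1}\end{smallmatrix}\right|$ and $(d_i, r_i) = \dim(\cL'_{i-1},\cL_{i-1})\,(d_{i-1}, r_{i-1}) - (d'_{i-1}, r'_{i-1})$, which is \eqref{eqn.primed}.

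The argument is essentially bookkeeping, so I do not anticipate a serious obstacle; the one point that needs a little care --- and which I would flag as the main (minor) subtlety --- is the verification of the strict numerical inequality $\dim(\cA,\cB)\rank(\cB) > \rank(\cA)$ demanded by Lemma~\ref{lemma.rightmutate}. This is not automatic from the mere existence of the mutations, but follows from the fact that every term of a partial elliptic helix, and hence every mutation occurring here, is a nonzero simple bundle, so the dichotomy of Theorem~\ref{thm:evalDichotomy} must fall into the surjective case. A final housekeeping point is to confirm the index ranges: \eqref{eqn.unprimed} and \eqref{eqn.primed} are asserted for precisely those $i$ for which all of $\cL_{i-2}, \cL_{i-1}, \cL_i, \cL'_{i-1}, \cL'_i$ are defined in the given partial helix, in the sense of Definition~\ref{def:partialEllipticHelix}.
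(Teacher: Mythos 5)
Your proposal is correct and takes essentially the same route as the paper, which presents Proposition~\ref{prop:drRecursionRelations} as an immediate consequence of Lemma~\ref{lemma.rightmutate} applied, exactly as you do, to the identifications $\cL'_i \cong R_{\cL_{i-1}}\cL_{i-2}$ and $\cL_i \cong R_{\cL_{i-1}}\cL'_{i-1}$. Your explicit verification of the slope and rank hypotheses (via Theorem~\ref{thm:evalDichotomy} and rank counting in the mutation sequences) simply spells out details the paper leaves implicit.
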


\begin{definition}  \label{def:seed}
A {\it seed} is a triple $(\mu_0,\mu'_1,\mu_1)$ of strictly increasing rational numbers. We write fractions in reduced form $\mu_0 = \frac{d_0}{r_0}, \mu'_1 = \frac{d'_1}{r'_1}, \mu_1 = \frac{d_1}{r_1}$ with positive denominators. The {\it numerical invariants generated by the seed} are the integers $d_i,r_i,d'_i,r'_i, \ i \in \bN$ defined using the recursion relations~(\ref{eqn.unprimed}),(\ref{eqn.primed}).
\end{definition}

\begin{lemma} \label{lemma.claim1}
For $n \geq 1$, the numerical invariants generated by a seed satisfy $\begin{vmatrix} d_{n+1} & d_{n} \\ r_{n+1} & r_{n} \end{vmatrix} = \begin{vmatrix} d_{n} & d_{n}' \\ r_{n} & r_{n}' \end{vmatrix}$.
\end{lemma}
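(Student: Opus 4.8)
The plan is to derive the identity directly from the recursion~(\ref{eqn.primed}) for the unprimed invariants; no induction is required. It will be convenient to write column vectors $v_i := \binom{d_i}{r_i}$ and $v'_i := \binom{d'_i}{r'_i}$, and to introduce the pairing $[u,w] := \left|\begin{smallmatrix} a & c \\ b & d \end{smallmatrix}\right| = ad-bc$ for $u = \binom{a}{b}$, $w = \binom{c}{d}$. This pairing is $\mathbb{Z}$-bilinear and antisymmetric, so in particular $[u,u] = 0$. In this notation the claim is exactly that $[v_{n+1},v_n] = [v_n,v'_n]$ for all $n \geq 1$.

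First I would record that for $n \geq 1$ all the quantities involved are defined: $v_n$ and $v'_n$ come either from the seed (when $n=1$) or from the recursions~(\ref{eqn.primed}) and~(\ref{eqn.unprimed}) (when $n \geq 2$, for which $v_{n-2}$ already makes sense), and $v_{n+1}$ is then given by~(\ref{eqn.primed}) with $i = n+1$, namely
$$
v_{n+1} = [v_n,v'_n]\, v_n - v'_n .
$$
Substituting this expression into the left-hand side and expanding by bilinearity gives
$$
[v_{n+1},v_n] = [v_n,v'_n]\,[v_n,v_n] - [v'_n,v_n] = -[v'_n,v_n] = [v_n,v'_n],
$$
using $[v_n,v_n] = 0$ and antisymmetry of the pairing. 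Since $[v_n,v'_n] = \left|\begin{smallmatrix} d_n & d'_n \\ r_n & r'_n \end{smallmatrix}\right|$, this is precisely the asserted equality.

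There is no genuine obstacle here: the entire content is the one substitution above together with the elementary properties of the $2\times 2$ determinant form. The only point requiring care — and the reason the statement is restricted to $n \geq 1$ — is that the recursion~(\ref{eqn.primed}) producing $v_{n+1}$ is only available once $v'_n$ has been defined, which first happens at $n = 1$; I would make this bookkeeping explicit so that the proof stands on its own.
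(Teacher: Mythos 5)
Your argument is correct and is essentially the paper's own proof: both substitute the recursion~(\ref{eqn.primed}) with $i = n+1$ into the first column of the determinant and conclude by bilinearity and the vanishing of $\begin{vmatrix} d_n & d_n \\ r_n & r_n \end{vmatrix}$. The extra remark about when the recursion first applies is harmless bookkeeping and does not change the argument.
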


\begin{proof}
This follows from properties of the determinant:
$$
\begin{CD}
\begin{vmatrix} d_{n+1} & d_{n} \\ r_{n+1} & r_{n} \end{vmatrix} & = & \begin{vmatrix} \begin{vmatrix} d_{n} & d_{n}' \\ r_{n} & r_{n}' \end{vmatrix} \begin{pmatrix} d_{n} \\ r_{n} \end{pmatrix} - \begin{pmatrix} d_{n}' \\ r_{n}' \end{pmatrix}, \begin{pmatrix} d_{n} \\ r_{n} \end{pmatrix} \end{vmatrix} & = & \begin{vmatrix} d_{n} & d_{n}' \\ r_{n} & r_{n}'\end{vmatrix}.
\end{CD}
$$
\end{proof}

\begin{lemma} \label{lemma.claim2}
For $n \geq 2$, the numerical invariants generated by a seed satisfy $\begin{vmatrix} d_{n+1} & d_{n+1}' \\ r_{n+1} & r_{n+1}' \end{vmatrix}= \begin{vmatrix} d_{n-1} & d_{n-2} \\ r_{n-1} & r_{n-2} \end{vmatrix}$.
\end{lemma}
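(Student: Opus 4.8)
The plan is to prove the identity by a short direct computation with $2\times 2$ determinants, using only multilinearity in the columns together with antisymmetry. It is convenient to introduce the shorthand $v_i := \begin{pmatrix} d_i \\ r_i \end{pmatrix}$ and $v_i' := \begin{pmatrix} d_i' \\ r_i' \end{pmatrix}$, and to write $[a,b]$ for the determinant of the $2\times 2$ matrix with columns $a$ and $b$. With this notation the recursion relations (\ref{eqn.unprimed}) and (\ref{eqn.primed}) become $v_i' = [v_{i-1},v_{i-2}]\,v_{i-1} - v_{i-2}$ and $v_i = [v_{i-1},v_{i-1}']\,v_{i-1} - v_{i-1}'$ respectively, valid whenever the indices that occur are nonnegative, and the claim is that $[v_{n+1},v_{n+1}'] = [v_{n-1},v_{n-2}]$ for $n\geq 2$.

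First I would substitute for the first column of $[v_{n+1},v_{n+1}']$ using (\ref{eqn.primed}) at $i=n+1$ and for the second column using (\ref{eqn.unprimed}) at $i=n+1$, obtaining
$$[v_{n+1},v_{n+1}'] = \bigl[\,[v_n,v_n']\,v_n - v_n',\ [v_n,v_{n-1}]\,v_n - v_{n-1}\,\bigr].$$
Expanding by bilinearity and using $[v_n,v_n]=0$ together with $[v_n',v_n]=-[v_n,v_n']$, the two terms containing $v_n$ cancel, leaving $[v_{n+1},v_{n+1}'] = [v_n',v_{n-1}]$. Then I would substitute for $v_n'$ via (\ref{eqn.unprimed}) at $i=n$, which is exactly where the hypothesis $n\geq 2$ is used; again the $v_{n-1}$-term dies because $[v_{n-1},v_{n-1}]=0$, and one is left with $[v_n',v_{n-1}] = -[v_{n-2},v_{n-1}] = [v_{n-1},v_{n-2}]$, as required. (One could instead shorten the first step by invoking Lemma~\ref{lemma.claim1}, but the brute-force expansion is already completely elementary.)

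There is no conceptual obstacle here; the only thing demanding care is the sign bookkeeping in the antisymmetric bracket, and checking that every index appearing in the two applications of the recursion is $\geq 0$, which is precisely what forces $n\geq 2$ rather than $n\geq 1$.
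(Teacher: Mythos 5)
Your proof is correct and follows essentially the same computation as the paper: substitute both recursions into the columns of $[v_{n+1},v_{n+1}']$, observe the cross-terms cancel by antisymmetry to reduce to $[v_n',v_{n-1}]$, then apply the unprimed recursion once more. The sign bookkeeping and the identification of $n\geq 2$ as forced by the last substitution are both right.
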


\begin{proof}
\begin{eqnarray*}
\begin{vmatrix} d_{n+1} & d_{n+1}' \\ r_{n+1} & r_{n+1}' \end{vmatrix} & = & \begin{vmatrix} \begin{pmatrix} d_{n+1} \\ r_{n+1} \end{pmatrix}, \begin{vmatrix} d_{n} & d_{n-1} \\ r_{n} & r_{n-1} \end{vmatrix} \begin{pmatrix} d_{n} \\ r_{n} \end{pmatrix} - \begin{pmatrix} d_{n-1} \\ r_{n-1} \end{pmatrix} \end{vmatrix} \\
& = & \begin{vmatrix} \begin{vmatrix} d_{n} & d_{n'} \\ r_{n} & r_{n}' \end{vmatrix} \begin{pmatrix} d_{n} \\ r_{n} \end{pmatrix} - \begin{pmatrix} d_{n}' \\ r_{n}' \end{pmatrix}, \begin{vmatrix} d_{n} & d_{n-1} \\ r_{n} & r_{n-1} \end{vmatrix} \begin{pmatrix} d_{n} \\ r_{n} \end{pmatrix} - \begin{pmatrix} d_{n-1} \\ r_{n-1} \end{pmatrix} \end{vmatrix} \\
& = & -\begin{vmatrix} d_{n} & d_{n}' \\ r_{n} & r_{n}' \end{vmatrix}\begin{vmatrix} d_{n} & d_{n-1} \\ r_{n} & r_{n-1} \end{vmatrix} - \begin{vmatrix} d_{n} & d_{n-1} \\ r_{n} & r_{n-1} \end{vmatrix} \begin{vmatrix} d_{n}' & d_{n} \\ r_{n}' & r_{n} \end{vmatrix}+\begin{vmatrix} d_{n}' & d_{n-1} \\ r_{n}' & r_{n-1}\end{vmatrix} \\
& = & \begin{vmatrix} d_{n}' & d_{n-1} \\ r_{n}' & r_{n-1} \end{vmatrix} \\
& = & \begin{vmatrix} \begin{vmatrix} d_{n-1} & d_{n-2} \\ r_{n-1} & r_{n-2} \end{vmatrix} \begin{pmatrix} d_{n-1} \\ r_{n-1} \end{pmatrix}-\begin{pmatrix} d_{n-2} \\ r_{n-2} \end{pmatrix}, \begin{pmatrix} d_{n-1} \\ r_{n-1} \end{pmatrix}  \end{vmatrix} \\
& = & \begin{vmatrix} d_{n-1} & d_{n-2} \\ r_{n-1} & r_{n-2} \end{vmatrix}.
\end{eqnarray*}
\end{proof}

The next result follows immediately from Lemma \ref{lemma.claim1} and Lemma \ref{lemma.claim2}.

\begin{corollary} \label{cor.periodicity}
For $n \geq 3$, the numerical invariants generated by a seed satisfy
$\begin{vmatrix} d_{n+1} & d_{n} \\ r_{n+1} & r_{n} \end{vmatrix} = \begin{vmatrix} d_{n-2} & d_{n-3} \\ r_{n-2} & r_{n-3}\end{vmatrix}$ and $\begin{vmatrix} d_{n+1} & d_{n+1}' \\ r_{n+1} & r_{n+1}' \end{vmatrix}= \begin{vmatrix} d_{n-2} & d_{n-2}' \\ r_{n-2} & r_{n-2}' \end{vmatrix}$.
\end{corollary}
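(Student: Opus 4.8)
The plan is to obtain Corollary~\ref{cor.periodicity} by chaining Lemma~\ref{lemma.claim1} and Lemma~\ref{lemma.claim2} against one another, the only point requiring care being the bookkeeping of index shifts, which is precisely what forces the hypothesis $n \geq 3$.

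For the first identity, I would start from Lemma~\ref{lemma.claim1}, applied at $n$ (valid since $n \geq 1$), to rewrite $\left|\begin{smallmatrix} d_{n+1} & d_{n} \\ r_{n+1} & r_{n} \end{smallmatrix}\right|$ as $\left|\begin{smallmatrix} d_{n} & d_{n}' \\ r_{n} & r_{n}' \end{smallmatrix}\right|$. Then I would invoke Lemma~\ref{lemma.claim2} with its index taken to be $n-1$; this instance is legitimate exactly when $n - 1 \geq 2$, i.e.\ $n \geq 3$, and it identifies $\left|\begin{smallmatrix} d_{n} & d_{n}' \\ r_{n} & r_{n}' \end{smallmatrix}\right|$ with $\left|\begin{smallmatrix} d_{n-2} & d_{n-3} \\ r_{n-2} & r_{n-3} \end{smallmatrix}\right|$. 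Composing the two equalities gives the first claimed formula.

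For the second identity, I would proceed in the opposite order: Lemma~\ref{lemma.claim2} applied at $n$ (valid since $n \geq 2$) rewrites $\left|\begin{smallmatrix} d_{n+1} & d_{n+1}' \\ r_{n+1} & r_{n+1}' \end{smallmatrix}\right|$ as $\left|\begin{smallmatrix} d_{n-1} & d_{n-2} \\ r_{n-1} & r_{n-2} \end{smallmatrix}\right|$, after which Lemma~\ref{lemma.claim1} applied with its index taken to be $n-2$ (valid when $n - 2 \geq 1$, i.e.\ $n \geq 3$) identifies this last determinant with $\left|\begin{smallmatrix} d_{n-2} & d_{n-2}' \\ r_{n-2} & r_{n-2}' \end{smallmatrix}\right|$. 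Composing again yields the second formula.

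There is no real obstacle here: both lemmas have already been established by direct manipulation of the recursion relations (\ref{eqn.unprimed}) and (\ref{eqn.primed}) together with multilinearity and alternation of the determinant, so the corollary is purely formal once one checks that $n \geq 3$ is exactly the common constraint imposed by the two shifted applications. Conceptually, one may read the two equalities together as saying that the pair of determinant sequences $\bigl(\left|\begin{smallmatrix} d_{n+1} & d_n \\ r_{n+1} & r_n \end{smallmatrix}\right|,\ \left|\begin{smallmatrix} d_{n+1} & d_{n+1}' \\ r_{n+1} & r_{n+1}' \end{smallmatrix}\right|\bigr)$ is $3$-periodic in $n$, which is the form in which this statement will be used when verifying the period-$3$ helix condition.
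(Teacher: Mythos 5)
Your argument is correct and matches the paper's proof, which simply observes that the corollary follows immediately from Lemma~\ref{lemma.claim1} and Lemma~\ref{lemma.claim2}; your index bookkeeping (claim1 at $n$ followed by claim2 at $n-1$, and claim2 at $n$ followed by claim1 at $n-2$, each requiring $n \geq 3$) is exactly the intended chaining.
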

This 3-periodicity of numerical invariants can also be nicely interpreted in terms of triads and their mutations using the following
\begin{definition}  \label{def:Homdim}
Let $T = (\cA,\cB,\cC)$ be a triple of objects in $\Coh(X)$. The {\it Hom dimension} of the triple is
$$
\dim\Hom(\cA,\cB,\cC) = (\dim\Hom(\cA,\cB),\dim\Hom(\cA,\cC),\dim\Hom(\cB,\cC))
$$
\end{definition}

\begin{proposition} \label{prop:HomDimMutation}
Let $T$ be a right mutable triad with Hom dimension $(a,b,c)$. Then $\dim\Hom RT = (b,c,a)$.
\end{proposition}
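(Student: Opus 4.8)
The plan is to unwind what the Hom dimension of a triad records and track it through one right mutation. Recall that a right mutable triad $T = (\cA,\cB,\cC)$ consists of simple bundles of strictly increasing slope, so $\mu(\cA) < \mu(\cB) < \mu(\cC)$, and $RT = (\cC, R_\cC\cA, R_\cC\cB)$. By Definition~\ref{def:Homdim}, if $\dim\Hom(T) = (a,b,c)$ then $a = \dim(\cA,\cB)$, $b = \dim(\cA,\cC)$, $c = \dim(\cB,\cC)$, and we must show $\dim\Hom(RT) = (\dim(\cC,R_\cC\cA),\dim(R_\cC\cA,R_\cC\cB),\dim(\cC,R_\cC\cB)) = (b,c,a)$.

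First I would handle the two ``outer'' entries, which come essentially for free. Since $\cA$ right mutates through $\cC$, we have the defining exact sequence $0 \to \cA \to \,^*(\cA,\cC)\otimes\cC \to R_\cC\cA \to 0$ (Definition~\ref{def:mutable} and the dualization used in Lemma~\ref{lemma.rightmutate}). Applying $\Hom(\cC,-)$ and using $^1(\cC,\cA) = 0$ — which holds by Serre duality since $\mu(\cA) < \mu(\cC)$ forces $(\cA,\cC^*\text{-twist})$ vanishing, more precisely $^1(\cC,\cA) = (\cA,\cC)^* $ is nonzero but that is the wrong direction; rather $^1(\cC,\cA)$ is Serre-dual to $(\cA,\cC)$... one must instead use that $\cA,\cC$ are stable with $\mu(\cA)<\mu(\cC)$ so $(\cC,\cA) = 0$ and by Serre duality $^1(\cA,\cC) = (\cC,\cA)^* = 0$ — wait, I need $^1(\cC,\cA)$, which is Serre-dual to $(\cA,\cC)$, hence generally nonzero. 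So applying $\Hom(\cC,-)$ directly is not immediately clean; instead I would apply $\Hom(-,\text{bundle})$ or reorganize: since $(\cC,R_\cC\cA)$ is identified in Lemma~\ref{lemma.rightmutate}(4)/\ref{lem:relnInSncViaHelix}-style arguments, one sees $(\cC, R_\cC\cA)$ sits in $0 \to (\cC,\cA) \to (\cC,\cC)\otimes\,^*(\cA,\cC) \to (\cC,R_\cC\cA) \to {}^1(\cC,\cA)$, and $(\cC,\cA) = 0$ by stability while $^1(\cC,\cA) = (\cA,\cC)^* \neq 0$ in general — so this does not give $(\cC,R_\cC\cA) = (\cA,\cC) = b$ on the nose. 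The cleaner route: use that $R_\cC$ is (the inverse of) a mutation equivalence on the relevant exceptional-type subcategory, so $\dim(\cC, R_\cC\cA) = \dim(L_\cC\cC\text{-partner})$... The most reliable approach is numerical: by Lemma~\ref{lemma.rightmutate}(1), for any right mutable pair of simple bundles $\dim(\cX,\cY) = |\begin{smallmatrix} d_\cY & d_\cX\\ r_\cY & r_\cX\end{smallmatrix}|$, and $R_\cC\cB$ has rank/degree given by Lemma~\ref{lemma.rightmutate}(3). I would therefore just compute all three target dimensions as $2\times 2$ determinants in the ranks and degrees $(d_\cA,r_\cA),(d_\cB,r_\cB),(d_\cC,r_\cC)$ and the mutated data, and verify the identities $\dim(\cC,R_\cC\cA) = b$, $\dim(R_\cC\cA,R_\cC\cB) = c$, $\dim(\cC,R_\cC\cB) = a$ are each a one-line determinant manipulation (multilinearity and the fact that $R_\cC\cX$ has coordinate vector $\dim(\cX,\cC)\cdot(d_\cC,r_\cC) - (d_\cX,r_\cX)$), exactly as in Lemma~\ref{lemma.claim1} and Lemma~\ref{lemma.claim2}.

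Concretely: writing $[\cX,\cY]$ for $|\begin{smallmatrix} d_\cY & d_\cX\\ r_\cY & r_\cX\end{smallmatrix}|$, we have $a = [\cA,\cB]$, $b = [\cA,\cC]$, $c = [\cB,\cC]$, and the coordinate vector of $R_\cC\cX$ is $b_\cX(d_\cC,r_\cC) - (d_\cX,r_\cX)$ where $b_\cA = [\cA,\cC] = b$ and $b_\cB = [\cB,\cC] = c$. Then $\dim(\cC,R_\cC\cA)$ equals the determinant of the matrix with columns $(d_\cC,r_\cC)$ and $b(d_\cC,r_\cC) - (d_\cA,r_\cA)$, which by bilinearity is $-|\begin{smallmatrix}d_\cC & d_\cA\\ r_\cC & r_\cA\end{smallmatrix}| = [\cA,\cC] = b$; likewise $\dim(\cC,R_\cC\cB) = [\cB,\cC] = c$... no, $= -|\begin{smallmatrix}d_\cC & d_\cB\\ r_\cC & r_\cB\end{smallmatrix}| = [\cB,\cC] = c$, but we want $a$ in the last slot, so I must be careful with which pair appears; and $\dim(R_\cC\cA, R_\cC\cB)$ expands, using bilinearity in both columns, into $b_\cA b_\cB [\cC,\cC] - b_\cA|\begin{smallmatrix}d_\cC & d_\cB\\ r_\cC & r_\cB\end{smallmatrix}| - b_\cB|\begin{smallmatrix}d_\cA & d_\cC\\ r_\cA & r_\cC\end{smallmatrix}| + [\cA,\cB] $; the first term vanishes and the cross terms telescope against each other (this is precisely the computation in Lemma~\ref{lemma.claim2}), leaving $[\cA,\cB] = a$ — after rechecking signs. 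The only genuine content beyond bookkeeping is the reduction to the determinant formula, i.e. verifying that each target pair $(\cC,R_\cC\cA)$, $(R_\cC\cA,R_\cC\cB)$, $(\cC,R_\cC\cB)$ is again a right mutable pair of simple bundles with the smaller slope first, so that Lemma~\ref{lemma.rightmutate}(1) applies — but this is exactly the content of Lemma~\ref{lemma.rightmutate}(4) together with the hypothesis that $T$ is right mutable (so $\cA$ and $\cB$ both right mutate through $\cC$, and $R_\cC\cB$ dominates $R_\cC\cA$ in slope by Proposition~\ref{prop:maxEgen}/stability).

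The main obstacle I anticipate is keeping the sign conventions and the ordering of the Hom-dimension triple straight: the determinant $|\begin{smallmatrix} d_\cY & d_\cX\\ r_\cY & r_\cX\end{smallmatrix}|$ is antisymmetric in $\cX,\cY$, and the three slots of $\dim\Hom$ pair the three bundles of the triad in a fixed cyclic pattern, so after mutation one must check the cyclic shift $(a,b,c)\mapsto(b,c,a)$ rather than, say, $(c,a,b)$ or a reversal. I would pin this down by computing one small explicit numerical example (e.g. a $(0,\tfrac12,1)$-type seed) as a sanity check alongside the formal determinant argument. A secondary, more conceptual route — which I would mention but not rely on — is that $R_\cC$ together with its effect on $\cA,\cB$ realizes the standard braid-group action on exceptional collections, under which Hom dimensions of triples transform by the known cyclic rule; but staying within the elliptic-curve setting and invoking only Lemma~\ref{lemma.rightmutate} is cleaner and self-contained.
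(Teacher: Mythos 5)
Your final argument—computing each entry of $\dim\Hom(RT)$ as a $2\times 2$ determinant via Lemma~\ref{lemma.rightmutate}(1), writing the coordinate vector of $R_\cC\cX$ as $\dim(\cX,\cC)\cdot(d_\cC,r_\cC)-(d_\cX,r_\cX)$, and expanding by multilinearity so that the cross terms cancel—is exactly the paper's approach, which simply cites the determinant identities of Lemmas~\ref{lemma.claim1} and~\ref{lemma.claim2}. You catch and fix your own slip in the ordering of the Hom-dimension triple (the correct identities are $\dim(\cC,R_\cC\cA)=b$, $\dim(\cC,R_\cC\cB)=c$, $\dim(R_\cC\cA,R_\cC\cB)=a$), and the abandoned cohomological detour at the start is a recognized dead end rather than an error, so the proposal is correct.
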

\begin{proof}
This follows from the lemmas above.
\end{proof}
This means that the $2\times 2$-determinants in the recursion relations (\ref{eqn.primed}),(\ref{eqn.unprimed}) can only be one of the 3 initial determinants of the seed.

We can now give a numerical criterion for when a triad can be repeatedly right mutated to produce a partial elliptic helix of period 3, indexed by $\bN$.

\begin{theorem}  \label{thm:numericalCriterionGenHelix}
Let $T=(\mathcal{L}_{0}, \mathcal{L}_{1}^{'}, \mathcal{L}_{1})$ be a $(\mu_0,\mu'_1,\mu_1)$-triad. Let $r_i,r'_i$ be the integers generated from the seed $(\mu_0,\mu'_1,\mu_1)$ as in Definition~\ref{def:seed}. If $ r_n,r'_n >0$ for all $n$, then for $i \geq 1$, $R^{i}T =: (\mathcal{L}_{i}, \mathcal{L}_{i+1}^{'}, \mathcal{L}_{i+1})$ is a well-defined right mutable triad. Furthermore, $\cL_0.\cL_1,\cL_2,\ldots$ is a partial elliptic helix of period 3.
\end{theorem}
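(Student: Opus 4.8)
The plan is to prove by induction on $i \ge 0$ the statement $P(i)$: the iterated mutation $R^{i}T =: (\mathcal{L}_{i},\mathcal{L}_{i+1}',\mathcal{L}_{i+1})$ is a well-defined triad whose ranks and degrees are exactly the integers $d_{i},r_{i},d_{i+1}',r_{i+1}',d_{i+1},r_{i+1}$ produced by the seed. The base case $P(0)$ is immediate: by Atiyah's classification a simple bundle on $X$ has coprime rank and degree, so the reduced fractions $\mu_{0},\mu_{1}',\mu_{1}$ force the ranks and degrees of $\mathcal{L}_{0},\mathcal{L}_{1}',\mathcal{L}_{1}$ to be $(d_{0},r_{0}),(d_{1}',r_{1}'),(d_{1},r_{1})$.

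For the inductive step assume $P(0),\dots,P(i)$; in particular the slopes $\mu(\mathcal{L}_{0})<\mu(\mathcal{L}_{1})<\dots<\mu(\mathcal{L}_{i+1})$ are strictly increasing. The key observation is that the right-mutability hypotheses of Lemma~\ref{lemma.rightmutate} for the triad $R^{i}T$ are precisely the positivity assumption of the theorem: the slope conditions hold because $R^{i}T$ is a triad, and identifying $\dim(\mathcal{L}_{i},\mathcal{L}_{i+1})$ and $\dim(\mathcal{L}_{i+1}',\mathcal{L}_{i+1})$ via Lemma~\ref{lem:dimHoms} with the $2\times 2$ determinants appearing in~(\ref{eqn.unprimed}),(\ref{eqn.primed}) gives
$$
\dim(\mathcal{L}_{i},\mathcal{L}_{i+1})\, r_{i+1}-r_{i}=r_{i+2}'>0,
\qquad
\dim(\mathcal{L}_{i+1}',\mathcal{L}_{i+1})\, r_{i+1}-r_{i+1}'=r_{i+2}>0.
$$
Hence by Lemma~\ref{lemma.rightmutate}(2) the triad $R^{i}T$ is right mutable, so $R^{i+1}T := R(R^{i}T) = (\mathcal{L}_{i+1},\mathcal{L}_{i+2}',\mathcal{L}_{i+2})$ with $\mathcal{L}_{i+2}' := R_{\mathcal{L}_{i+1}}\mathcal{L}_{i}$ and $\mathcal{L}_{i+2} := R_{\mathcal{L}_{i+1}}\mathcal{L}_{i+1}'$. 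By Lemma~\ref{lemma.rightmutate}(2)--(4) these are simple bundles with $\mu(\mathcal{L}_{i+1})<\mu(\mathcal{L}_{i+2}'),\mu(\mathcal{L}_{i+2})$, and Lemma~\ref{lemma.rightmutate}(3) together with~(\ref{eqn.unprimed}),(\ref{eqn.primed}) shows their ranks and degrees are $(d_{i+2}',r_{i+2}')$ and $(d_{i+2},r_{i+2})$.

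To conclude $P(i+1)$ it remains to check that $R^{i+1}T$ is again a triad, i.e.\ that $\mu(\mathcal{L}_{i+2}')<\mu(\mathcal{L}_{i+2})$; since $r_{i+2}',r_{i+2}>0$ this is the positivity of $d_{i+2}r_{i+2}'-d_{i+2}'r_{i+2}$. For $i\ge 1$, Lemma~\ref{lemma.claim2} identifies this quantity with $d_{i}r_{i-1}-d_{i-1}r_{i}=\dim(\mathcal{L}_{i-1},\mathcal{L}_{i})>0$ (Lemma~\ref{lem:dimHoms}, using $\mu(\mathcal{L}_{i-1})<\mu(\mathcal{L}_{i})$). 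For $i=0$ a short expansion of~(\ref{eqn.unprimed}),(\ref{eqn.primed}) at index $2$, exactly as in the proof of Lemma~\ref{lemma.claim2}, identifies $d_{2}r_{2}'-d_{2}'r_{2}$ with the seed quantity $d_{1}'r_{0}-d_{0}r_{1}'>0$. This finishes the induction, yielding the first assertion of the theorem together with right mutability of every $R^{i}T$.

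For the last claim, fix $j\ge 0$. The triad $R^{j}T$, its mutation $R^{j+1}T$, and the latter's mutation $R^{j+2}T$ satisfy the hypotheses of Remark~\ref{rem:triadInduction} (with $\mathcal{L}_{j+3}$ a bundle by Lemma~\ref{lemma.rightmutate}(2)), so $\mathcal{L}_{j},\mathcal{L}_{j+1},\mathcal{L}_{j+2},\mathcal{L}_{j+3}$ is a partial elliptic helix of period $3$; in particular property~(2) of Definition~\ref{def:ellipticHelix} holds at index $j+3$. As every index $\ge 3$ arises in this way, and the $\mathcal{L}_{i}$ are simple bundles of strictly increasing slope (whence property~(1) holds, as in Remark~\ref{rem:onlyCheckMutations}), the sequence $\mathcal{L}_{0},\mathcal{L}_{1},\mathcal{L}_{2},\dots$ is a partial elliptic helix of period $3$. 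The only step requiring genuine care is verifying that $R^{i+1}T$ remains a triad --- i.e.\ that the slopes stay strictly increasing --- which is exactly where the $2\times 2$-determinant periodicity of Lemmas~\ref{lemma.claim1}--\ref{lemma.claim2} and Corollary~\ref{cor.periodicity} is needed; the remainder is routine matching of the recursions~(\ref{eqn.unprimed}),(\ref{eqn.primed}) against the rank/degree formulae of Lemma~\ref{lemma.rightmutate}.
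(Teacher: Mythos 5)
Your proposal is correct and takes essentially the same route as the paper: induction on $i$, using the positivity of the seed-generated ranks together with Lemma~\ref{lemma.rightmutate} to perform the right mutations and identify ranks and degrees, and then the determinant periodicity to get the remaining slope inequality $\mu(\mathcal{L}'_{i+2})<\mu(\mathcal{L}_{i+2})$, with Remarks~\ref{rem:onlyCheckMutations} and \ref{rem:triadInduction} giving the partial-helix conclusion. The only (harmless) difference is that where the paper cites Proposition~\ref{prop:HomDimMutation} for that last slope inequality, you unwind its proof and invoke Lemma~\ref{lemma.claim2} directly (supplying the $i=0$ computation separately), and you make explicit the inductive bookkeeping that the ranks and degrees of $R^iT$ agree with the seed invariants, which the paper leaves implicit.
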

\begin{proof}
From Remarks~\ref{rem:onlyCheckMutations} and \ref{rem:triadInduction}, the last assertion that $\cL_0,\cL_1,\cL_2,\ldots$ is a partial elliptic helix of period 3 will follow from the assertion that all the $R^iT$ are well-defined triads. We prove the latter by induction on $i$.

First note that the initial Hom dimension $\dim \Hom T = (a,b,c)$ consists of positive integers $a,b,c >0$ since $T$ is a triad and Hom dimensions can be computed using Lemma~\ref{lem:dimHoms}. Suppose that $R^{i-1}T$ is a well-defined triad. Thus  $\mu(\cL_{i-1}) < \mu(\cL_i)$ and by hypothesis $\dim (\cL_{i-1},\cL_i) r_i - r_{i-1} = r'_{i+1} > 0$. Hence, by
Lemma~\ref{lemma.rightmutate}, $\cL_{i-1}$ right mutates through $\cL_i$ and $\cL'_{i+1}$ is a bundle. The same argument using the fact that $r_i >0$ shows that $\cL'_i$ right mutates through $\cL_i$ and the right mutation is a bundle $\cL_{i+1}$. Thus $R^iT$ is a well-defined triple. It remains only to show that $\mu(\cL'_{i+1}) < \mu(\cL_{i+1})$. It follows from Proposition~\ref{prop:HomDimMutation} that $\dim \Hom R^iT$ also consists of the positive integers $a,b,c$ above  (possibly permuted), so by Lemma~\ref{lem:dimHoms}, we are done.
\end{proof}

The hypotheses of the theorem naturally prompt
\begin{question}  \label{qu:seedsGenHelix}
Which seeds $(\mu_0,\mu'_1,\mu_1)$ generate numerical invariants with all $r'_i,r_i$ positive?
\end{question}
Note that Bondal-Polishchuk \cite[Proposition~7.3]{bp} constructed an elliptic helix of line bundles or period 3. It can be built from any $(0,\tfrac{3}{2},3)$-triad. This has the nice property that the Hom dimension is the constant $(3,3,3)$ so the recursion relations in (\ref{eqn.unprimed}),  (\ref{eqn.primed}) simplify considerably. We show below, the same is true for the seed $(0,\tfrac{d}{2},d)$ where $d$ is an odd integer greater than three.

\begin{lemma} \label{lemma.constant}
Given the seed $(0,\tfrac{d}{2},d)$, the numerical invariants satisfy  $$\begin{vmatrix} d_{n} & d_{n-1} \\ r_{n} & r_{n-1} \end{vmatrix} = \begin{vmatrix} d_{n} & d_{n}' \\ r_{n} & r_{n}' \end{vmatrix} =
\begin{vmatrix} d'_{n} & d_{n-1} \\ r'_{n} & r_{n-1} \end{vmatrix}
=d$$
for all $n \geq 1$,
\end{lemma}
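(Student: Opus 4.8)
The plan is to collapse all three determinants in the statement onto one sequence and then invoke the $3$-periodicity already established. For $n\ge 1$ write $D_n := \left|\begin{smallmatrix} d_n & d_{n-1}\\ r_n & r_{n-1}\end{smallmatrix}\right|$, $E_n := \left|\begin{smallmatrix} d_n & d'_n\\ r_n & r'_n\end{smallmatrix}\right|$ and $F_n := \left|\begin{smallmatrix} d'_n & d_{n-1}\\ r'_n & r_{n-1}\end{smallmatrix}\right|$ for the three determinants appearing. Lemma~\ref{lemma.claim1} is precisely the identity $E_n = D_{n+1}$ for $n\ge 1$. Substituting the recursion~(\ref{eqn.unprimed}) into the first column of $F_n$ and using multilinearity — the term carrying the repeated column $\binom{d_{n-1}}{r_{n-1}}$ vanishes — gives in exactly the same way $F_n = D_{n-1}$ for $n\ge 2$. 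Hence it suffices to prove that $D_n = d$ for every $n\ge 1$, together with the single extra verification $F_1 = d$, since the identity $F_n = D_{n-1}$ only applies for $n\ge 2$.

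For the base data I would read the initial values off the seed: since $d$ is odd, $d/2$ is in lowest terms, so $(0,\tfrac{d}{2},d)$ gives $d_0 = 0$, $r_0 = 1$, $d_1 = d$, $r_1 = 1$, $d'_1 = d$, $r'_1 = 2$, whence $D_1 = E_1 = F_1 = d$ by inspection. Applying~(\ref{eqn.unprimed}) and~(\ref{eqn.primed}) once each yields $\binom{d'_2}{r'_2} = \binom{d^2}{d-1}$ and $\binom{d_2}{r_2} = \binom{d^2-d}{d-2}$, from which a one-line computation gives $E_2 = d$. By Lemma~\ref{lemma.claim1} this means $D_2 = E_1 = d$ and $D_3 = E_2 = d$. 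Corollary~\ref{cor.periodicity} then gives $D_n = D_{n-3}$ for all $n\ge 4$, so an induction on the residue of $n$ modulo $3$ shows $D_n = d$ for every $n\ge 1$.

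Assembling the pieces completes the proof: $D_n = d$ for all $n\ge 1$; $E_n = D_{n+1} = d$ for all $n\ge 1$ by Lemma~\ref{lemma.claim1}; and $F_n = D_{n-1} = d$ for $n\ge 2$ while $F_1 = d$ was checked directly. There is no real obstacle in this argument — the periodicity of Corollary~\ref{cor.periodicity} does the work and only finitely many small determinants are evaluated by hand; the only thing needing care is the index bookkeeping, namely unwinding the recursions far enough to pin down $D_1, D_2, D_3$ before the periodicity takes over, and not forgetting the $n=1$ case of $F_n$, which the shift identity $F_n = D_{n-1}$ misses. One could alternatively argue geometrically via Proposition~\ref{prop:HomDimMutation}, since up to a cyclic relabelling the three determinants are the components of the Hom dimension of the $n$-th mutated triad, which is constantly $(d,d,d)$ as it starts there; but that route presupposes right mutability, so the purely numerical one is cleaner and self-contained.
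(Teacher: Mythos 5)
Your proof is correct and takes essentially the same route as the paper: both reduce to checking the three $2\times 2$-minors of the initial data $\begin{pmatrix} d & d & 0 \\ 1 & 2 & 1\end{pmatrix}$ and then invoke the $3$-periodicity of the determinants from Lemmas~\ref{lemma.claim1}, \ref{lemma.claim2} and Corollary~\ref{cor.periodicity}. Your version is somewhat more careful: you explicitly track which shift identity ($E_n = D_{n+1}$, $F_n = D_{n-1}$) applies at which index and check the stray case $F_1$ by hand, and you cite the purely numerical Corollary~\ref{cor.periodicity} rather than the geometric Proposition~\ref{prop:HomDimMutation} that the paper names — a sound choice, since the geometric statement presupposes right mutability of a triad, which at this point of the argument is exactly what the numerical positivity of $r_n, r'_n$ (to which this lemma contributes) is being used to establish.
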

\begin{proof}
By Proposition~\ref{prop:HomDimMutation}, it suffices to check that all three $2 \times 2$-minors of
$$
\begin{pmatrix} d_{1} & d'_1 & d_{0} \\ r_{1} & r'_1 & r_{0} \end{pmatrix} =
\begin{pmatrix} d & d & 0 \\ 1 & 2 & 1 \end{pmatrix}
$$
equal $d$.

\end{proof}

\begin{proposition}  \label{prop:egSeedWhichGen}
If $d>3$ is an odd integer, then the numerical invariants of the seed $(0,d/2,d)$ defined in Definition~\ref{def:seed} satisfy $r_n,r'_n >0$ for all $n \geq 0$.
\end{proposition}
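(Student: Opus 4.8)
The plan is to reduce everything to a single scalar linear recurrence. Since $d$ is odd, the fraction $d/2$ is already in lowest terms, so the seed data attached to $(0,d/2,d)$ is $(d_0,r_0)=(0,1)$, $(d'_1,r'_1)=(d,2)$ and $(d_1,r_1)=(d,1)$. By Lemma~\ref{lemma.constant}, every $2\times 2$ determinant occurring on the right-hand side of the recursion relations~(\ref{eqn.unprimed}) and~(\ref{eqn.primed}) equals $d$, so for $i\geq 2$ these recursions collapse to the pair $r'_i=d\,r_{i-1}-r_{i-2}$ and $r_i=d\,r_{i-1}-r'_{i-1}$; in particular $r_2=d-2$.

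Next I would eliminate the primed sequence. Rewriting the second identity as $r'_i=d\,r_i-r_{i+1}$ (valid for $i\geq 1$) and comparing it with the first gives the third-order recurrence $r_m=d\,r_{m-1}-d\,r_{m-2}+r_{m-3}$ for all $m\geq 3$, with initial values $r_0=r_1=1$ and $r_2=d-2$. The characteristic polynomial $x^3-dx^2+dx-1$ has $x=1$ as a root, so the sequence of first differences $e_m:=r_m-r_{m-1}$ satisfies the two-term recurrence $e_m=(d-1)e_{m-1}-e_{m-2}$ for $m\geq 3$, with $e_1=0$ and $e_2=d-3>0$ --- this last inequality being the only place the hypothesis $d>3$ is used.

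A short induction then yields positivity of the ranks. I claim that $0\leq e_{m-1}\leq e_m$ for every $m\geq 2$: the base case $m=2$ reads $0=e_1\leq d-3=e_2$, and if $0\leq e_{m-2}\leq e_{m-1}$ then $e_m=(d-1)e_{m-1}-e_{m-2}\geq (d-1)e_{m-1}-e_{m-1}=(d-2)e_{m-1}\geq e_{m-1}\geq 0$, since $d>3$. Hence $(r_n)_{n\geq 0}$ is non-decreasing, and as $r_0=1$ we conclude $r_n\geq 1>0$ for all $n\geq 0$. For the primed invariants we have $r'_1=2>0$, while for $n\geq 2$ the bound $1\leq r_{n-2}\leq r_{n-1}$ gives $r'_n=d\,r_{n-1}-r_{n-2}\geq (d-1)r_{n-1}\geq d-1>0$.

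There is no genuine obstacle here; the one point that deserves attention is noticing that $x=1$ divides the characteristic polynomial of the $r_n$-recurrence, which converts an a priori oscillatory third-order recurrence into the manifestly positivity-preserving recurrence for the differences $e_m$. Everything else is routine bookkeeping with the reduced recursions provided by Lemma~\ref{lemma.constant}.
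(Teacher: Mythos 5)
Your proof is correct, and it takes a genuinely different and more elementary route than the paper's. The paper arrives at the same third-order recurrence $r_m = dr_{m-1} - dr_{m-2} + r_{m-3}$, but then solves it explicitly: it derives a closed form for $r_n$ and $d_n$ in terms of $A = \sqrt{(d-3)(d+1)}$ (Lemma~\ref{lem:closedFormrd}), establishes the bound $0 < d-1-A < 1$ (Lemma~\ref{lemma.abound}) and an auxiliary inequality (Lemma~\ref{lemma.ineq}), and then shows $r_{n+1}/r_n \geq (d-1)/2$ (Proposition~\ref{prop.ratio}) to deduce that $(r_n)$ is increasing. You instead notice that $x=1$ divides the characteristic polynomial $x^3 - dx^2 + dx - 1 = (x-1)(x^2 - (d-1)x + 1)$, pass to the first differences $e_m = r_m - r_{m-1}$, and run a two-line induction showing $0 \leq e_{m-1} \leq e_m$; this avoids the surd $A$ entirely and is a shorter route to positivity. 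One thing your approach does not yield as a byproduct is the paper's Lemma~\ref{lem:rightlimitslope}, the closed-form limit $\lim_{n\to\infty} d_n/r_n = 2d/(A-(d-3))$ and its irrationality, which the paper needs later in Theorem~\ref{thm:egTriadGenHelix} (and ultimately to invoke Polishchuk's results in Theorem~\ref{thm:ncElliptic}). So the paper's explicit diagonalization does double duty; for the proposition in isolation, your argument is cleaner.
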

\begin{proof}
We begin by explicitly computing the numerical invariants $r_{n}$, $d_{n}$ and $\mu_{n}:= d_{n}/r_{n}$ for $n \geq 1$.  By Lemma \ref{lemma.constant} and (\ref{eqn.primed}) and (\ref{eqn.unprimed}), both pairs $r_{n}$, $r_{n}'$ and $d_{n}$, $d_{n}'$ satisfy the following recurrence relations: $a_{n}'=da_{n-1}-a_{n-2}$ and $a_{n}=da_{n-1}-a_{n-1}'$.  Hence, after substitution, we have
$$
a_{n+1}=da_{n}-da_{n-1}+a_{n-2}
$$
for $n \geq 2$.  Using standard recursion relation methods (or induction) we compute $r_{n}$ from the initial values $r_{0}=1=r_{1}$ and $r_{2}=d-2$ and $d_{n}$ from the initial values $d_{0}=0$, $d_{1}=d$ and $d_{2}=d^2-d$. This gives
\begin{lemma}  \label{lem:closedFormrd}
Letting $A=\sqrt{(d-3)(d+1)}$, we have, for $n \geq 0$,
$$
r_{n} = 2^{-n-1}\left((d-1-A)^{n}\left(\frac{d-3}{A}+1 \right)+(d-1+A)^{n}\left(1-\frac{d-3}{A}\right) \right),
$$
and
$$
d_{n}=\frac{2^{-n}d}{A}\left( (d-1+A)^{n}-(d-1-A)^{n} \right).
$$
\end{lemma}

\begin{lemma} \label{lemma.abound}
For $d \geq 5$, $0<d-1-A<1$.
\end{lemma}

\begin{proof}
We must show that $A<d-1$ and $d-2<A$.  We prove the first inequality.  The second is similar.  Since $A^2=d^2-2d-3$ and $(d-1)^2=d^2-2d+1$, it follows that $A^2<(d-1)^2$, whence the first inequality.
\end{proof}
For later purposes, we also need the following result.
\begin{lemma}  \label{lem:rightlimitslope}
\begin{equation*} \lim_{n \to \infty}\frac{d_{n}}{r_{n}} = \frac{2d}{A-(d-3)} \notin \bQ
\end{equation*}
\end{lemma}
\begin{proof}
The value of the limit follows readily from Lemmas~\ref{lem:closedFormrd} and \ref{lemma.abound}. Irrationality follows from the fact that $(d-3)(d+1)$ is not a perfect square, being distinct from $(d-2)^2,(d-1)^2$ and $d^2$.
\end{proof}

Next, we bound $\frac{r_{n+1}}{r_{n}}$. We begin with the following
\begin{lemma} \label{lemma.ineq}
For all $n \geq 1$
$$
\left( 1 - ( \tfrac{d-3}{A}) \right)(d-1+A)^{n} \geq \left(1 +(\tfrac{d-3}{A})\right)(d-1-A)^{n}.
$$
\end{lemma}
\begin{proof}
After rearranging, we must establish
$$
\left( \frac{d-1+A}{d-1-A} \right)^{n} \geq \frac{A+d-3}{A-(d-3)}
$$
where we have used Lemma \ref{lemma.abound}.  For $n=1$, we must show
$$
(d-1+A)(A-(d-3)) \geq (d-1-A)(A+(d-3))
$$
which we see holds on substituting $A^2 = (d-3)(d+1)$.  Now suppose the desired inequality holds for some $n \geq 1$.  To prove the induction step, it suffices to show $d-1+A \geq d-1-A$, which is clear.
\end{proof}

\begin{proposition} \label{prop.ratio}
For $n\geq 1$, $\frac{r_{n+1}}{r_{n}} \geq \frac{d-1}{2}$.
\end{proposition}

\begin{proof}
Using our explicit formula for $r_{n}$, we have
$$
\frac{r_{n+1}}{r_{n}} = \frac{1}{2}\left(\frac{(d-1-A)^{n+1}(\frac{d-3}{A}+1) +(d-1+A)^{n+1} (1-\frac{(d-3)}{A})}{(d-1-A)^{n}(\frac{d-3}{A}+1 )+(d-1+A)^{n} (1-\frac{(d-3)}{A})}\right)
$$
$$
=\frac{1}{2}\left(\frac{(d-1-A)(d-1-A)^{n}(\frac{d-3}{A}+1) +(d-1+A)(d-1+A)^{n} (1-\frac{(d-3)}{A})}{(d-1-A)^{n}(\frac{d-3}{A}+1 )+(d-1+A)^{n} (1-\frac{(d-3)}{A})}\right)
$$
$$
=\frac{1}{2}(d-1)+\frac{1}{2}A\left(\frac{-(d-1-A)^{n}(\frac{d-3}{A}+1) +(d-1+A)^{n} (1-\frac{(d-3)}{A})}{(d-1-A)^{n}(\frac{d-3}{A}+1 )+(d-1+A)^{n} (1-\frac{(d-3)}{A})}\right).
$$
By Lemma \ref{lemma.ineq}, the numerator of the second summand is nonnegative.  In addition, by Lemma \ref{lemma.abound}, the denominator of the second summand is positive.  It follows that $\frac{r_{n+1}}{r_{n}} \geq \frac{d-1}{2}$ for $n\geq 1$ as desired.
\end{proof}
We may now complete the proof of Proposition~\ref{prop:egSeedWhichGen}. Now Proposition~\ref{prop.ratio} shows us that $r_n$ is an increasing function of $n$ so must be positive. Furthermore, $r'_n = dr_{n-1} - r_{n-2}$ must also be positive. This completes the proof of the Proposition.
\end{proof}

\begin{theorem}  \label{thm:egTriadGenHelix}
Let $T$ be a $(0,\tfrac{d}{2},d)$-triad for some odd integer $d\geq 5$. We may repeatedly mutate $T$ to the right to generate a partial elliptic helix of period three, $\cL_0,\cL_1,\cL_2, \ldots$. We may also repeatedly mutate it to the left to complete this to an elliptic helix $\underline{\cL} = (\cL_i)_{i \in \bZ}$ of period 3. Furthermore,
$$
\lim_{n \to -\infty}\mu(\cL_n) = d \biggl{(}\frac{A-(d-1)}{A-(d-3)}\biggr{)}
$$
and this limit is negative and irrational.
\end{theorem}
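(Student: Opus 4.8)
The plan is to build the right half of the helix from the criterion already in hand, to produce the left half by a duality that converts left mutations into right mutations, and then to read off the limiting slope from the computation already done for a $(0,\tfrac d2,d)$-helix. First I would dispose of the right half: since $d\geq 5$ is odd, Proposition~\ref{prop:egSeedWhichGen} shows the seed $(0,\tfrac d2,d)$ generates numerical invariants with all $r_n,r'_n>0$, so Theorem~\ref{thm:numericalCriterionGenHelix} applies to the given $(0,\tfrac d2,d)$-triad $T=(\cL_0,\cL'_1,\cL_1)$ and produces a partial elliptic helix of period three $\cL_0,\cL_1,\cL_2,\ldots$, with $R^iT=(\cL_i,\cL'_{i+1},\cL_{i+1})$.

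For the left half I would use the contravariant equivalence $(-)^{*}$ on vector bundles on $X$. Dualising the defining exact sequence of a mutation, exactly as in the proof of Lemma~\ref{lemma.rightmutate}(2), gives $(R_{\cB}\cA)^{*}\cong L_{\cB^{*}}\cA^{*}$ and $(L_{\cE}\cF)^{*}\cong R_{\cE^{*}}\cF^{*}$; hence the involution $\Phi(\cA,\cB,\cC):=(\cC^{*},\cB^{*},\cA^{*})$ carries triads to triads and satisfies $R^{i}(\Phi T)=\Phi(L^{i}T)$. Now fix a line bundle $\cM$ of degree $d$. Then $\Phi(T)\otimes\cM=\bigl(\cL_1^{*}\otimes\cM,(\cL'_1)^{*}\otimes\cM,\cL_0^{*}\otimes\cM\bigr)$ has slopes $0<\tfrac d2<d$, so by the previous paragraph it is repeatedly right mutable, generating a partial elliptic helix $\cN_0,\cN_1,\cN_2,\ldots$; since right mutation commutes with twisting by $\cM^{\pm1}$, the triad $\Phi(T)$ is also repeatedly right mutable, and applying $\Phi$ shows $T$ is repeatedly \emph{left} mutable, producing bundles $\cL_{-1},\cL_{-2},\ldots$, which unwinding $R^{i}(\Phi T)=\Phi(L^{i}T)$ identifies as $\cL_{-n}\cong\cN_{n+1}^{*}\otimes\cM$ for $n\geq 0$. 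Splicing the two partial helices along the common triad $T$ gives $\underline{\cL}=(\cL_i)_{i\in\bZ}$; setting $T_i:=(\cL_i,\cL'_{i+1},\cL_{i+1})$, the relation $RT_i=T_{i+1}$ holds for all $i\in\bZ$ (for $i\geq 0$ from the right-hand construction, for $i<0$ from $LT_{i+1}=T_i$ together with the fact that left and right mutation of a triad are mutually inverse, \cite[Proposition~7.1]{bp}), and from this the period-three relation $L^2\cL_i\cong\cL_{i-3}$ of Definition~\ref{def:ellipticHelix}(2) follows for every $i$. The remaining conditions of Definition~\ref{def:ellipticHelix}(1) then follow from Remark~\ref{rem:onlyCheckMutations}, all the $\cL_i$ being simple bundles of strictly increasing slope. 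Hence $\underline{\cL}$ is an elliptic helix of period three.

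For the limit, from $\cL_{-n}\cong\cN_{n+1}^{*}\otimes\cM$ and $\deg\cM=d$ one gets $\mu(\cL_{-n})=d-\mu(\cN_{n+1})$, and since $\cN_0,\cN_1,\ldots$ is generated by a $(0,\tfrac d2,d)$-triad, Lemma~\ref{lem:rightlimitslope} gives $\mu(\cN_j)\to\tfrac{2d}{A-(d-3)}$, so
$$\lim_{n\to-\infty}\mu(\cL_n)=d-\frac{2d}{A-(d-3)}=\frac{d\bigl(A-(d-3)-2\bigr)}{A-(d-3)}=d\biggl(\frac{A-(d-1)}{A-(d-3)}\biggr).$$
For negativity, $A-(d-1)<0$ by Lemma~\ref{lemma.abound}, while $A-(d-3)>0$ since $A^2=(d-3)(d+1)>(d-3)^2$ for $d>3$; as $d>0$ the quotient is negative. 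Irrationality is immediate, the limit being the integer $d$ minus the irrational number $\tfrac{2d}{A-(d-3)}$ of Lemma~\ref{lem:rightlimitslope}.

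I expect the main obstacle to be purely organisational rather than substantive: all the analytic input (positivity of the ranks, the value of the forward limit) is already established, so the real care is in the bookkeeping of the duality $\Phi$ — pinning down the index shifts in $R^{i}(\Phi T)=\Phi(L^{i}T)$ and verifying that the spliced bi-infinite sequence genuinely satisfies every clause of Definition~\ref{def:ellipticHelix}, in particular the period-three relation across the junction at $i=0$, which is precisely where the mutual-inverseness of $L$ and $R$ enters.
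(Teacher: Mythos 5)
Your proposal is correct and follows essentially the same route as the paper's proof: the right half via Proposition~\ref{prop:egSeedWhichGen} and Theorem~\ref{thm:numericalCriterionGenHelix}, the left half by dualising and twisting by a degree-$d$ line bundle (the paper simply uses $\cL_1$ itself in place of your $\cM$) to obtain another $(0,\tfrac{d}{2},d)$-triad whose right mutations dualise back to the left mutations of $T$, and the limit slope read off from Lemma~\ref{lem:rightlimitslope} via $\mu(\cL_{-n})=d-\mu(\cN_{n+1})$. Your bookkeeping of the junction via the mutual inverseness of $L$ and $R$ is slightly more explicit than the paper's, but it is the same argument.
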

\begin{definition}
\label{def:helixGenByTriad}
We call $\underline{\cL}$ in the theorem the {\it helix  generated by $T$}.
\end{definition}

\begin{proof}
Proposition~\ref{prop:egSeedWhichGen} ensures that the hypotheses of Theorem~\ref{thm:numericalCriterionGenHelix} hold, from which we conclude that we can repeatedly mutate to the right and generate the partial elliptic helix of period three,  $\cL_0,\cL_1,\cL_2, \ldots$.

To mutate left, we consider the $(0,\tfrac{d}{2},d)$-triad
\begin{equation*} \label{eqn.triple}
\cL_1 \otimes T^*:=
( \mathcal{L}_{1} \otimes \mathcal{L}_{1}^{*}, \mathcal{L}_{1} \otimes \mathcal{L}_{1}^{'*}, \mathcal{L}_{1} \otimes \mathcal{L}_{0}^{*})
\end{equation*}
The argument in the previous paragraph means we may repeatedly mutate this to the right, and so the same is true of the triad $T^* = (\cL_1^*,\cL^{'*}_1, \cL_0^*)$ on tensoring by $\cL_1^*$. This gives a partial elliptic helix of period three of the form $\cL_1^*, \cL_0^*, \cL_{-1}^*, \cL_{-2}^*,\ldots $. Now duals of partial elliptic helices of period three are also partial elliptic helices of period three since the dual functor takes evaluation short exact sequences of vector bundles to coevaluation short exact sequences of vector bundles and vice versa. We thus obtain an elliptic helix $\underline{\cL}$ of period 3.

Finally, we compute the limit of the slopes. By Lemma~\ref{lem:rightlimitslope}, the sequence defined by the right mutations of the triple $\cL_1^* \otimes T$ above has limit slope
$$
\frac{2d}{A-(d-3)}
$$
and this is an irrational number.  Thus,
$$
\lim_{n \to \infty} \mu(\cL^*_{-n}) =
-d+\frac{2d}{A-(d-3)} = -d \biggl{(}\frac{A-(d-1)}{A-(d-3)}\biggr{)},
$$
which must be irrational as well, and positive by Lemma \ref{lemma.abound}.  It follows that $\lim_{n \to -\infty}\mu(\cL_n) = d-\frac{2d}{A-(d-3)}$ is irrational and negative.
\end{proof}








\section{Properties of $\End(\underline{\cL})$ in the equigenerated case}  \label{sec:hilbert}

Fix a smooth elliptic curve $X$. We know from Theorem~\ref{thm:egTriadGenHelix}, that $T = (0,\tfrac{d}{2},d)$-triad with $d \geq 5$ an odd integer generates an elliptic helix of period 3. Since $\dim\Hom T = (d,d,d)$ in this case, for all $i \in \bZ$, the space of generators $(\cL_i,\cL_{i+1})$ in the endomorphism algebra $\End(\underline{\cL})$ has dimension $d$ by Proposition~\ref{prop:HomDimMutation}. In this section, we study helices with this latter property, and study the Hilbert series and relations for $\End(\underline{\cL})$. This generalises some of the key results in \cite{atv1} and \cite{bp}.

\begin{definition}  \label{def:equigen}
We say that a $\bZ$-indexed $k$-algebra $C$ is {\it equigenerated by $d$ elements} if it is generated in degree one and $\dim_k C_{i,i+1} = d$ for all $i \in \bZ$.
\end{definition}

\begin{proposition}  \label{prop:EndLEquigen}
Let $\underline{\cL}$ be an elliptic helix of period 3 in $\Coh(X)$. Then $\End(\underline{\cL})$ is equigenerated by $d$ elements iff one of the following equivalent conditions hold:
\begin{enumerate}
    \item $\underline{\cL}$ is generated by a triad $T$ with $\dim\Hom T = (d,d,d)$.
    \item $d = \dim (\cL_i,\cL_{i+1}) = \dim (\cL_{i+1}, \cL_{i+2})=\dim (\cL_{i+2}, \cL_{i+3})$ for some $i \in \bZ$.
\end{enumerate}
In particular, if $\underline{\cL}$ is generated by a $(0,\tfrac{d}{2},d)$-triad, then $\End(\underline{\cL})$ is equigenerated by $d$ elements.
\end{proposition}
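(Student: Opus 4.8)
The plan is to reduce all three conditions, together with the equigeneration property itself, to a single statement about the triple of $\operatorname{Hom}$-dimensions of the triads underlying $\underline{\cL}$, exploiting the cyclic behaviour of that triple under mutation recorded in Proposition~\ref{prop:HomDimMutation}.

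First I would observe that, by Theorem~\ref{thm:BPBisKoszul}(2), the algebra $B:=\End(\underline{\cL})$ is generated in degree one; since moreover the strictly increasing slopes force $B_{ij}=\Hom(\cL_{-j},\cL_{-i})=0$ whenever $j<i$, being equigenerated by $d$ elements is equivalent to the single numerical condition $\dim_k B_{i,i+1}=d$ for all $i\in\bZ$. Unwinding the indexing convention, $B_{i,i+1}=\Hom(\cL_{-i-1},\cL_{-i})$, so as $i$ runs over $\bZ$ this says precisely that $\dim\Hom(\cL_j,\cL_{j+1})=d$ for every $j\in\bZ$.

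Next I would introduce, for each $i\in\bZ$, the triad $T_i:=(\cL_i,\,L_{\cL_{i+1}}\cL_{i+2},\,\cL_{i+1})$; since $\underline{\cL}$ is an elliptic helix of period $3$, each $T_i$ is right mutable with $RT_i=T_{i+1}$ (the observation recorded just after Definition~\ref{def:triad}). Writing $\dim\Hom T_0=(a,b,c)$, Proposition~\ref{prop:HomDimMutation} gives that $\dim\Hom T_i$ is the $i$-fold cyclic shift of $(a,b,c)$; in particular the \emph{middle} entry of $\dim\Hom T_i$, which by Definition~\ref{def:Homdim} is exactly $\dim\Hom(\cL_i,\cL_{i+1})$, is $3$-periodic in $i$ and runs through the values $b,c,a$ as $i$ increases by one. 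From this one reads off all the equivalences at once: $B$ is equigenerated by $d$ iff these middle entries are all $d$ iff $a=b=c=d$; condition (1), that $\underline{\cL}$ is generated by some triad $T=T_i$ with $\dim\Hom T_i=(d,d,d)$, holds iff the mutation-invariant multiset $\{a,b,c\}$ equals $\{d,d,d\}$, i.e.\ iff $a=b=c=d$; and condition (2), that $\dim\Hom(\cL_i,\cL_{i+1})$, $\dim\Hom(\cL_{i+1},\cL_{i+2})$ and $\dim\Hom(\cL_{i+2},\cL_{i+3})$ are all $d$ for some $i$, is the statement that three consecutive middle entries — a full period — equal $d$, which again forces $a=b=c=d$.

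Finally, for the ``in particular'' clause I would simply compute $\dim\Hom T$ for a $(0,\tfrac{d}{2},d)$-triad $T=(\cL_0,\cL'_1,\cL_1)$: by Lemma~\ref{lem:dimHoms} each component of $\dim\Hom T$ is the corresponding $2\times2$ minor of $\left(\begin{smallmatrix} d_1 & d'_1 & d_0\\ r_1 & r'_1 & r_0\end{smallmatrix}\right)=\left(\begin{smallmatrix} d & d & 0\\ 1 & 2 & 1\end{smallmatrix}\right)$, and all three minors equal $d$ (this is the computation underlying Lemma~\ref{lemma.constant}); hence $\dim\Hom T=(d,d,d)$ and condition (1) holds. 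I do not expect a genuine obstacle here: the whole argument is driven by Proposition~\ref{prop:HomDimMutation}, and the only care needed is bookkeeping — matching the graded piece $B_{i,i+1}$ to the right $\Hom(\cL_j,\cL_{j+1})$, checking that each $T_i$ is right mutable so Proposition~\ref{prop:HomDimMutation} applies, and tracking which coordinate of the $\operatorname{Hom}$-dimension vector is cyclically shifted into which under successive mutations.
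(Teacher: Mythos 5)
Your argument is correct and is essentially the paper's own proof: the paper deduces the proposition from the $3$-periodicity of Hom dimensions (Corollary~\ref{cor.periodicity}) together with the cyclic shift under mutation in Proposition~\ref{prop:HomDimMutation}, which is exactly the bookkeeping you carry out with the triads $T_i$. Your verification of the $(0,\tfrac{d}{2},d)$ case via the $2\times 2$ minors is the same computation as in Lemma~\ref{lemma.constant}, so no further comment is needed.
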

\begin{proof}
This follows from 3-periodicity of the dimensions of Hom spaces Corollary~\ref{cor.periodicity} and  Proposition~\ref{prop:HomDimMutation}.
\end{proof}

\begin{proposition}  \label{prop:KoszulResultionEquigen}
Let $\underline{\cL}$ be an elliptic helix of period 3 in $\Coh(X)$. Suppose that $\End(\underline{\cL})$ is equigenerated by $d$ elements. If $A = \bS^{nc}(\underline{\cL})$ is the quadratic part of $B = \End(\underline{\cL})$ the following hold.
\begin{enumerate}
    \item For all $i \in \bZ$, we have
    $$
    \dim A_{i,i+1} = \dim B_{i,i+1} = d
    $$
    and
    $$
    \dim A_{i,i+2} = \dim B_{i,i+2} = d^2-d.
    $$
    \item $A$ is a Koszul algebra whose Koszul resolutions all have the form
    $$
    0 \to e_{j+3}A \to e_{j+2}A^{\oplus d}  \to e_{j+1}A^{\oplus d} \to e_jA \to A_{jj} \to 0.
    $$
\end{enumerate}
\end{proposition}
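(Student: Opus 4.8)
The plan is to deduce both parts from the general Koszul machinery already established, using the fact (Theorem~\ref{thm:BPBisKoszul}) that $A = \bS^{nc}(\underline{\cL})$ is Koszul, $3$-periodic, AS-regular of dimension three with Gorenstein parameter three, together with Proposition~\ref{prop:canonicalHelixOfElliptic}, which tells us that the helix exact sequences \eqref{eq:ellipticHelixStructure} are precisely the Koszul complexes of the $A$-module $\underline{\cL}$. First I would establish part~(1). By definition of $\bS^{nc}(\underline{\cL})$ as the quadratic part of $B = \End(\underline{\cL})$, we have $A_{i,i+1} = B_{i,i+1} = (\cL_{-i-1},\cL_{-i})$ and $A_{i,i+2} = B_{i,i+2}$ (the quadratic part agrees with $B$ in degrees one and two). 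The hypothesis that $B$ is equigenerated by $d$ elements gives $\dim A_{i,i+1} = \dim B_{i,i+1} = d$ immediately. For the degree two dimension, I would apply $\Hom(\cL_{-j-2}, -)$ to the helix exact sequence \eqref{eq:ellipticHelixStructure}; since $\underline{\cL}$ is an elliptic helix of period $3$, the slopes are strictly increasing (Remark~\ref{rem:onlyCheckMutations}), so the relevant higher $\Ext$ groups vanish by stability, and the resulting long exact sequence is short exact. Counting dimensions in
$$
0 \to \Hom(\cL_{-j-2},\cL_{-j-3}) \to {}^*(\cL_{-j-3},\cL_{-j-2}) \otimes \Hom(\cL_{-j-2},\cL_{-j-2}) \to A_{j,j+1}\otimes \Hom(\cL_{-j-2},\cL_{-j-1}) \to \Hom(\cL_{-j-2},\cL_{-j}) \to 0,
$$
noting that $\Hom(\cL_{-j-2},\cL_{-j-3}) = 0$ (wrong direction of slope) and $\End\cL_{-j-2}=k$, together with the equigeneration hypothesis applied in its form (2) of Proposition~\ref{prop:EndLEquigen} (so all three relevant Hom spaces have dimension $d$), yields $\dim B_{j,j+1} = d^2 - d$; hence $\dim A_{i,i+2} = d^2 - d$ as claimed. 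Actually, the cleanest route is to observe that $A^{!}$ is Frobenius of index $3$ so $A^{!}_{j,j+3}\cong k$, and then part~(1) together with the shape of the Koszul complex forces the dimensions, but the direct computation above also works.

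Next I would prove part~(2). That $A$ is Koszul is already part of Theorem~\ref{thm:BPBisKoszul}(1), so the content is the \emph{shape} of the Koszul resolutions. The degree $j$ Koszul complex of the $A$-module $\cL_{-j} = e_jA$ (Definition~\ref{defn.Koszul}) has $i$th term ${}^!A^*_{j+i,j}\otimes e_{j+i}A$. The terms ${}^!A^*_{j,j} \cong k$ and ${}^!A^*_{j+1,j} \cong A_{j,j+1}$ has dimension $d$ by part~(1). The term ${}^!A^*_{j+2,j}$ is the space of quadratic relations $I_{j}$, which by part~(1) has dimension $\dim(A_{j,j+1}\otimes A_{j+1,j+2}) - \dim A_{j,j+2} = d^2 - (d^2-d) = d$. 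Finally, since $A^{!}$ is Frobenius of index $3$ (Theorem~\ref{thm:BPBisKoszul}(3)), we have ${}^!A^*_{j+3,j} = (A^{!}_{j+3,j})^* \cong k$, and ${}^!A^*_{j+i,j} = 0$ for $i > 3$ because $A$ has global dimension three (equivalently $A^{!}$ is concentrated in degrees $0$ through $-3$). Assembling these, the Koszul resolution reads
$$
0 \to e_{j+3}A \to e_{j+2}A^{\oplus d} \to e_{j+1}A^{\oplus d} \to e_jA \to A_{jj} \to 0,
$$
which is exactly the asserted form.

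I do not expect a serious obstacle here: the substantive facts — Koszulity, $3$-periodicity, the Frobenius property of $A^{!}$, and the identification of the helix sequences with Koszul complexes — are all imported from earlier results. The only point requiring a little care is the degree-two dimension count in part~(1): one must be sure the relevant $\Ext^1$ groups vanish so that applying $\Hom(\cL_{-j-2},-)$ to \eqref{eq:ellipticHelixStructure} yields a \emph{short} exact sequence, and one must invoke the equigeneration hypothesis in the form that makes all three boundary Hom-dimensions equal to $d$ (this is where Proposition~\ref{prop:EndLEquigen} and the $3$-periodicity of Hom-dimensions, Corollary~\ref{cor.periodicity} and Proposition~\ref{prop:HomDimMutation}, are used). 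Once that is in hand, part~(2) is essentially bookkeeping with the dimensions from part~(1) and the Frobenius/global-dimension constraints on $A^{!}$.
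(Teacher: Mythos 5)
Your degree-one count and your part~(2) are fine and essentially coincide with the paper's argument (Koszulity, the identification of the Koszul complex terms, $A^!$ Frobenius of index 3, global dimension three). The problem is the degree-two count in part~(1). You claim that applying $(\cL_{-j-2},-)$ to the helix sequence (\ref{eq:ellipticHelixStructure}) yields an exact four-term sequence because ``the relevant higher $\Ext$ groups vanish by stability.'' They do not: the leftmost term of (\ref{eq:ellipticHelixStructure}) is $\cL_{-j-3}$, whose slope is \emph{smaller} than that of $\cL_{-j-2}$, so by Serre duality ${}^1(\cL_{-j-2},\cL_{-j-3}) \cong (\cL_{-j-3},\cL_{-j-2})^*$ is $d$-dimensional, not zero. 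Concretely, splitting (\ref{eq:ellipticHelixStructure}) at $\cM := R_{\cL_{-j-2}}\cL_{-j-3}$, exactness of your sequence at the spot $A_{j,j+1}\otimes(\cL_{-j-2},\cL_{-j-1})$ is equivalent to surjectivity of $(\cL_{-j-2},\pi)\colon {}^*(\cL_{-j-3},\cL_{-j-2})\otimes \End(\cL_{-j-2}) \to (\cL_{-j-2},\cM)$, whose cokernel sits inside the nonvanishing group ${}^1(\cL_{-j-2},\cL_{-j-3})$; so this step needs a genuine argument, namely that the coevaluation sequence defining $\cM$ is also the evaluation sequence of the left mutation (via \cite[Proposition~7.1]{bp}, i.e.\ the nondegenerate Serre pairing in the definition of elliptically exceptional), or equivalently the identification in Lemma~\ref{lem:relnInSncViaHelix}. (Exactness at the last spot is unproblematic: ${}^1(\cL_{-j-2},\cM)=0$ does follow from stability, or from $B$ being generated in degree one.)

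The paper avoids this issue entirely: it uses Lemma~\ref{lem:relnInSncViaHelix} to identify the quadratic relation space $I_j = A^{!*}_{j+2,j}$ with $(\cL_{-j-2}, R_{\cL_{-j-2}}\cL_{-j-3})$, which is a Hom space of a mutated triad, and then Proposition~\ref{prop:HomDimMutation} (3-periodicity of Hom dimensions, all equal to $(d,d,d)$ by equigeneration) gives $\dim I_j = d$, hence $\dim A_{j,j+2} = d^2-d$ directly. Your proposed ``cleanest route'' fallback is circular as written (you invoke part~(1) to prove part~(1)); it can be repaired — the Frobenius-of-index-3 pairing between the degree-one and degree-two pieces of $A^!$, together with $\dim A_{i,i+1}=d$, forces $\dim I_j = d$ — but that repair, like the fix to your main computation, is exactly the missing ingredient, so as it stands the degree-two count is not established.
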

\begin{proof}
From Theorem~\ref{thm:BPBisKoszul}, we know that $A$ is Koszul, $A^!$ is Frobenius of index 3 and the morphism $A \to B$ is an isomorphism in degrees 0,1 and 2. Hence, we need only verify part (1) for $A$.  Lemma~\ref{lem:relnInSncViaHelix}, shows that the quadratic relations in $A$ are of the form $A^{!*}_{2+j,j} = (\cL_{-j-2},R_{\cL_{-j-2}}\cL_{-j-3})$. This has dimension $d$, since by Proposition~\ref{prop:HomDimMutation} the Hom dimension of a generating triad and all its mutations is $(d,d,d)$. This shows $\dim A_{i,i+2} = d^2 - d$. Furthermore, we now also know $A^!$ is dimension $d$ in degrees 1 and 2, and one dimensional in degrees 0 and 3 so part (2) also follows.
\end{proof}

\begin{definition}  \label{def:relativeHilbertSeries}
Let $A$ be a $\bZ$-indexed $k$-algebra. The {\it $n$-th relative Hilbert series} of $A$ is defined to be
$$
H_{A,n}(t) : = \sum_{i \in \bZ} \dim A_{n,n+i} t^i.
$$
\end{definition}
\begin{proposition}  \label{prop:HilbertSeries}
Let $\underline{\cL}$ be an elliptic helix of period 3 in $\Coh(X)$ such that $B = \End(\underline{\cL})$ is equigenerated by $d$ elements. Let $A = \bS^{nc}(\underline{\cL})$ be its quadratic part. Then for all $n \in \bZ$.
$$
H_{A,n}(t) = \frac{1}{1 - dt + dt^2 - t^3}, \quad
H_{B,n}(t) = \frac{1-t^3}{1 - dt + dt^2 - t^3}
$$
\end{proposition}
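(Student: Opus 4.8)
The plan is to treat $A$ and $B$ separately, each time reducing a linear recursion among relative Hilbert series to the stated rational function. \emph{The algebra $A$.} I would start from the Koszul resolution in Proposition~\ref{prop:KoszulResultionEquigen}(2): for every $n\in\bZ$ there is an exact sequence of graded right $A$-modules $0\to e_{n+3}A\to e_{n+2}A^{\oplus d}\to e_{n+1}A^{\oplus d}\to e_nA\to A_{nn}\to 0$. Relative Hilbert series are additive on such (bounded-below) exact sequences, and $e_{n+p}A$ contributes $t^{p}H_{A,n+p}(t)$ when read off relative to base degree $n$, so this yields
$$H_{A,n}(t)-dt\,H_{A,n+1}(t)+dt^{2}H_{A,n+2}(t)-t^{3}H_{A,n+3}(t)=1\qquad(n\in\bZ).$$
By Theorem~\ref{thm:BPBisKoszul}(1), $A$ is $3$-periodic, so $H_{A,n}=H_{A,n+3}$; the instances $n=0,1,2$ then become a $3\times 3$ circulant system $M(t)\cdot(H_{A,0},H_{A,1},H_{A,2})^{\top}=(1,1,1)^{\top}$ whose first row is $(1-t^{3},-dt,dt^{2})$. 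The one nonformal point is that $\det M(t)=(a+b+c)(a^{2}+b^{2}+c^{2}-ab-bc-ca)$ with $a=1-t^{3}$, $b=-dt$, $c=dt^{2}$; here $a+b+c=1-dt+dt^{2}-t^{3}$ and the quadratic factor both have constant term $1$, so $\det M(t)\in k[[t]]^{\times}$ and the system has a unique solution in $k[[t]]^{3}$. Since every row of $M(t)$ sums to $1-dt+dt^{2}-t^{3}$, the vector $\tfrac{1}{1-dt+dt^{2}-t^{3}}(1,1,1)^{\top}$ solves it, hence is the solution, giving $H_{A,n}(t)=\tfrac{1}{1-dt+dt^{2}-t^{3}}$ for $n=0,1,2$ and, by $3$-periodicity, for all $n$.

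\emph{The algebra $B$.} Fix $n$ and put $b_i=\dim_k B_{n,n+i}=\dim_k\Hom(\cL_{-n-i},\cL_{-n})$; by Theorem~\ref{thm:BPBisKoszul}(2) and Proposition~\ref{prop:KoszulResultionEquigen}(1) we have $b_0=1$, $b_1=d$, $b_2=d^{2}-d$. I would then compute $b_3$ by applying $\Hom(\cL_{-n-3},-)$ to the two mutation short exact sequences (\ref{eq:rightMutate}) and (\ref{eqn.rcan}) at index $n$: since the slopes of the $\cL_i$ strictly increase (Remark~\ref{rem:onlyCheckMutations}), stability kills all $\Ext^{1}$ between distinct members of the helix, while $\Ext^{1}(\cL_{-n-3},\cL_{-n-3})=k$; so the long exact sequence of (\ref{eq:rightMutate}) reads $0\to k\to W^{*}\otimes W\to\Hom(\cL_{-n-3},R_{\cL_{-n-2}}\cL_{-n-3})\to k\to 0$ with $W=(\cL_{-n-3},\cL_{-n-2})$ of dimension $d$, whence $\dim\Hom(\cL_{-n-3},R_{\cL_{-n-2}}\cL_{-n-3})=d^{2}$ (and $\Ext^{1}(\cL_{-n-3},R_{\cL_{-n-2}}\cL_{-n-3})=0$); feeding this into the long exact sequence of (\ref{eqn.rcan}) gives $b_3=d\cdot(d^{2}-d)-d^{2}=d^{3}-2d^{2}$. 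Next, for each $r\ge 1$ I would apply the contravariant functor $\Hom(-,\cL_{-n})$ to the helix exact sequence (\ref{eq:ellipticHelixStructure}) at index $n+r$; since $\Ext^{1}(\cL_{-n-r},\cL_{-n})=\Ext^{1}(\cL_{-n-r-1},\cL_{-n})=0$ (again by stability and increasing slopes), the syzygy argument of Lemma~\ref{lemma.lessurj}, read contravariantly, shows this functor is exact on that sequence, producing an exact sequence of $k$-vector spaces $0\to B_{n,n+r}\to B_{n,n+r+1}^{\oplus d}\to B_{n,n+r+2}^{\oplus d}\to B_{n,n+r+3}\to 0$ (the two factors $d$ coming from equigeneration of $B$). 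Hence $b_r-db_{r+1}+db_{r+2}-b_{r+3}=0$ for all $r\ge 1$, so $H_{B,n}(t)=\sum_{i\ge 0}b_it^{i}$ satisfies $(1-dt+dt^{2}-t^{3})H_{B,n}(t)=\sum_{r=0}^{3}(b_r-db_{r-1}+db_{r-2}-b_{r-3})t^{r}$, and plugging in $b_0,b_1,b_2,b_3$ collapses the right-hand side to $1-t^{3}$. Therefore $H_{B,n}(t)=\tfrac{1-t^{3}}{1-dt+dt^{2}-t^{3}}$ for every $n$.

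I expect the delicate step to be the evaluation of $b_3=\dim\Hom(\cL_{-n-3},\cL_{-n})$: one must run the two long exact sequences attached to (\ref{eq:rightMutate}) and (\ref{eqn.rcan}) carefully and retain the single nonzero self-$\Ext^{1}$ contribution, rather than assume every correction term vanishes. Everything else is bookkeeping with relative Hilbert series together with the elementary factorization of a $3\times 3$ circulant determinant over $k[[t]]$. One could shortcut the degree-three dimension count by invoking the description of $\ker(A\to B)$ as generated by a normal family of degree-three regular elements, but I would rather keep the Hilbert series computation logically independent of that; likewise, for $A$ one could instead subtract the three cyclic equations pairwise and argue $H_{A,0}=H_{A,1}=H_{A,2}$ directly, but the circulant argument is cleaner.
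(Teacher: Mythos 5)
Your proof is correct, and while the $A$-half follows the paper's own route, the $B$-half is genuinely different. For $A$, both arguments extract the recursion $h_{n,i}-dh_{n+1,i-1}+dh_{n+2,i-2}-h_{n+3,i-3}=\delta_{i,0}$ from the Koszul resolutions of Proposition~\ref{prop:KoszulResultionEquigen}(2); the paper then proves $h_{n,i}$ is independent of $n$ by induction on $i$ (the degree $\leq 2$ values being $n$-independent), so it never needs periodicity, whereas you invoke $3$-periodicity from Theorem~\ref{thm:BPBisKoszul}(1) and solve the circulant system --- an equally valid, slightly heavier, variant. For $B$, the paper argues numerically: by Proposition~\ref{prop:drRecursionRelations} and constancy of the Hom dimension $(d,d,d)$, the degrees and ranks $d_i,r_i$ satisfy $a_{i+3}-da_{i+2}+da_{i+1}-a_i=0$, hence so does $h_i=d_ir_0-d_0r_i$, which equals $\dim(\cL_0,\cL_i)$ for $i>0$ by Lemma~\ref{lemma.rightmutate}(1); comparing with the expansion of $\tfrac{1-t^3}{1-dt+dt^2-t^3}-1$ using the initial values $h_0=0$, $h_1=d$, $h_2=d^2-d$ finishes the proof with no degree-three computation at all. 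You instead obtain the same recursion for $b_i=\dim B_{n,n+i}$ homologically, by applying $\Hom(-,\cL_{-n})$ to the helix sequences (\ref{eq:ellipticHelixStructure}) and using $\Ext^1(\cL_{-n-r},\cL_{-n})=\Ext^1(\cL_{-n-r-1},\cL_{-n})=0$ (plus $\Ext^{\geq 2}=0$ on a curve), and you supply the missing third initial value $b_3=d^3-2d^2$ by a careful run through the two mutation sequences, correctly keeping the contribution of $\Ext^1(\cL_{-n-3},\cL_{-n-3})=k$; this value indeed matches the degree-three coefficient of $(1-t^3)H_{A}(t)$, confirming the bookkeeping. Your route avoids the elliptic-curve determinant formula of Lemma~\ref{lem:dimHoms} entirely and works directly from the helix structure and Ext-vanishing, at the price of the delicate $b_3$ count; the paper's route trades that for Riemann--Roch-style numerics whose third initial condition is the trivial identity $h_0=0$. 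One small caution: your parenthetical that stability kills all $\Ext^1$ between distinct members of the helix is only true in the direction $\Ext^1(\cL_i,\cL_j)=0$ for $i<j$ (Definition~\ref{def:ellipticHelix}(1), Remark~\ref{rem:onlyCheckMutations}); every instance you actually use is of this form, so the argument stands as written.
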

\begin{proof}
We first show that $H_{A,n}$ is independent of $n$. To this end, let $h_{n,i} = \dim A_{n,n+i}$ and note that for $i\leq 2$, it is independent of $n$ by Proposition~\ref{prop:KoszulResultionEquigen}(1). The Koszul resolutions in Proposition~\ref{prop:KoszulResultionEquigen} give the following recursive formula for all $n\in \bZ, i\geq 3$.
$$
h_{n,i} - d h_{n+1,i-1} + d h_{n+2,i-2} - h_{n+3,i-3} = 0.
$$
Induction on $i$ using Proposition \ref{prop:KoszulResultionEquigen}(1) shows that all the $h_{n,i}$ are independent of $n$ so the same is true of the relative Hilbert series and we may write $H_A = H_{A,n}$, The additivity of the usual Hilbert series on the Koszul resolutions now gives
$$
(1 - dt + dt^2 -t^3)H_A(t) = 1
$$
which gives the Hilbert series for $A$ stated above.

As for $B$, note that if you change the indices in $(\cL_i)$ by an additive constant, you still get an elliptic helix of period 3 whose endomorphism algebra is equigenerated by $d$ elements. It thus suffices to show that
$$
H_B(t) := \sum_{i\geq0} \dim (\cL_0,\cL_i) t^i =  \frac{1-t^3}{1 - dt + dt^2 - t^3}
$$
Let $r_i = \rank \cL_i, d_i = \deg \cL_i$ be the numerical invariants of $\underline{\cL}$ as also defined in (\ref{eq:dsandrs}). As observed in the proof of Proposition~\ref{prop:egSeedWhichGen}, Proposition~\ref{prop:drRecursionRelations} together with the fact that the triad generating $\underline{\cL}$ and their mutations all have Hom dimension $(d,d,d)$, imply that the $r_i$'s and $d_i$'s satisfy the recursive relation
\begin{equation}  \label{eqn:HilbertRecursion}
    a_{i+3} - da_{i+2} + da_{i+1} -a_i = 0.
\end{equation}
Hence the same is true of
$$
h_i := \begin{vmatrix}
d_i & d_0 \\ r_i & r_0
\end{vmatrix}.
$$
Furthermore, we have $h_i = \dim(\cL_0,\cL_i)$ for $i>0$ by Lemma~\ref{lemma.rightmutate}, although $\dim (\cL_0,\cL_0) = 1 \neq 0 = h_0$. It follows that $H_B(t) -1 = \sum_{i\geq 1} h_i t^i$. Now
$$
\frac{1-t^3}{1 - dt + dt^2 - t^3} - 1 =  (dt-dt^2) (1 + dt + \ldots) =  dt + (d^2-d)t^2 + \ldots
$$
In view of Proposition~\ref{prop:KoszulResultionEquigen}(1) and the recursive relation the $h_i$ satisfy, we see this power series is just $\sum_{i\geq 1} h_i t^i$ and the proof is complete.
\end{proof}
In \cite{atv1} and \cite{bp}, the case in which $d=3$ is analyzed in detail. In that case, Artin-Tate-Van den Bergh show that the analogous graded ring $B$ is obtained from the Koszul Artin-Shelter regular algebra $A$ by factoring out a normal degree three element. Our Hilbert series calculation suggests a similar result in our case. We turn to proving this.

We recall from \cite[Section~4]{species}, the notion of a normal family of elements in an indexed algebra.
\begin{definition}  \label{def:normal}
A {\it normal family of elements of  degree $d$} in a $\bZ$-indexed algebra $A$ is a family $\underline{g} = (g_i)_{i \in \bZ}$ where $g_i \in A_{i,i+d}$ and such that for every $i,j$ we have $g_i A_{i+d,j+d} = A_{i,j} g_j$. An element $g \in A_{ij}$ is said to be {\it regular} if  right and left multiplication by $g$ are injective as maps from $Ae_i \to Ae_j$ and $e_jA \to e_iA$ respectively.
\end{definition}

\begin{theorem}  \label{thm:kernelGenByNormal}
Let $\underline{\cL}$ be an elliptic helix of period 3 in $\Coh(X)$ such that $B = \End(\underline{\cL})$ is equigenerated by $d$ elements. Let
$$
\pi \colon A := \bS^{nc}(\underline{\cL}) \to B
$$
be the natural surjection from Theorem \ref{thm:BPBisKoszul}(2). Then $\ker \pi$ is generated by a normal family of regular elements of degree 3.
\end{theorem}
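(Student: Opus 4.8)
Write $K=\ker\pi$. The plan is to show that, as a two-sided ideal, $K$ is generated by a family $\underline{g}=(g_i)_{i\in\bZ}$ with $g_i$ spanning the one-dimensional space $K_{i,i+3}$, and then to extract normality and regularity from a Hilbert-series comparison. First I would record the numerology. By Theorem~\ref{thm:BPBisKoszul}(2) and Proposition~\ref{prop:KoszulResultionEquigen}, $\pi$ is surjective and an isomorphism in degrees $\leq 2$, so $K_{il}=0$ whenever $l-i\leq 2$; subtracting the two relative Hilbert series of Proposition~\ref{prop:HilbertSeries} gives
$$
\dim_k K_{il}\;=\;\dim_k A_{il}-\dim_k B_{il}\;=\;\dim_k A_{i,l-3}
$$
for all $i\leq l$, so in particular $\dim_k K_{i,i+3}=\dim_k A_{ii}=1$ and we may fix $0\neq g_i\in K_{i,i+3}$ for each $i$. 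The reduction is then: \emph{it suffices to prove that $Ke_m=Ag_{m-3}$ (an equality of left $A$-modules) for every $m$, and, by the mirror argument, that $e_iK=g_iA$ (an equality of right $A$-modules) for every $i$.} Indeed, these give $K=\sum_i Ag_i=\sum_i g_iA$, so $\underline{g}$ generates $K$; they force $g_iA_{i+3,l}=K_{il}=A_{i,l-3}g_{l-3}$ for all $i\leq l$, which for $l=j+3$ is precisely the normality condition of Definition~\ref{def:normal}; and they exhibit left multiplication by $g_i$ as a surjection of $e_{i+3}A$ onto $e_iK$ and right multiplication by $g_i$ as a surjection of $Ae_i$ onto $Ke_{i+3}$, each between graded components of equal finite $k$-dimension by the displayed formula together with the $3$-periodicity of $A$ (Theorem~\ref{thm:BPBisKoszul}(1)); hence both multiplications are injective, so each $g_i$ is regular.

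The core of the proof is a resolution of $Be_m$ over $A$. Because $\pi$ is surjective and bijective in degree $0$, the left $A$-module $Be_m=\oplus_i(\cL_{-m},\cL_{-i})$ is cyclic, generated in degree $0$, with minimal projective cover $Ae_m$; hence $Ke_m$ is its first syzygy and the graded space of minimal generators of $Ke_m$ at vertex $j$ is $\uTor^A_1(S_j,Be_m)$, with $S_j=e_jA/(e_jA)_{>j}$. I would compute these via the Koszul resolution of $S_j$ from Proposition~\ref{prop:KoszulResultionEquigen}(2): tensoring it over $A$ with $Be_m$ and using $e_{j+k}A\;\underline{\otimes}_A\,Be_m=B_{j+k,m}=(\cL_{-m},\cL_{-j-k})$, the resulting complex is canonically the complex obtained by applying $(\cL_{-m},-)$ to the degree-$j$ helix exact sequence~(\ref{eq:ellipticHelixStructure}) --- this is the same identification used in the proof of Proposition~\ref{prop.fg}, where applying $(\cL_{-j-v},-)$ ``sends the Koszul complex of $\underline{\cL}$ to the Koszul complex of $B$''. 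So $\uTor^A_i(S_j,Be_m)$ is the $i$-th cohomology of the complex got by applying the left exact functor $(\cL_{-m},-)$ to
$$
0\;\longrightarrow\;\cL_{-j-3}\;\xrightarrow{\,\iota_{\mathrm{can}}\,}\;C_2\;\xrightarrow{\,\psi_2\,}\;C_1\;\xrightarrow{\,\psi_1\,}\;\cL_{-j}\;\longrightarrow\;0 .
$$

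The decisive input --- and the step I expect to be the main obstacle --- is the factorisation of $\psi_2$ that is already built into the canonical helix structure. By~(\ref{eq:rightMutate}) and~(\ref{eqn.rcan}), $\psi_2=\iota\circ p$, where $0\to\cL_{-j-3}\xrightarrow{\iota_{\mathrm{can}}}C_2\xrightarrow{p}R_{\cL_{-j-2}}\cL_{-j-3}\to 0$ is exact and $\iota\colon R_{\cL_{-j-2}}\cL_{-j-3}\hookrightarrow C_1$ is a monomorphism. Applying $(\cL_{-m},-)$, the map $\iota_*$ is injective, $\ker p_*=\operatorname{im}\iota_{\mathrm{can},*}$ (left exactness applied to the first short exact sequence), and $(\cL_{-m},\psi_2)=\iota_*p_*$, so $\ker(\cL_{-m},\psi_2)=\ker p_*=\operatorname{im}\iota_{\mathrm{can},*}$; since moreover $\iota_{\mathrm{can},*}$ is injective, the complex becomes exact in homological degrees $2$ and $3$, and its length forces all higher homology to vanish. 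Thus $\uTor^A_i(S_j,Be_m)=0$ for all $i\geq 2$ and all $j,m$ --- with no case analysis and no slope hypotheses --- so $Ke_m$ is a projective left $A$-module. (Everything surrounding this point is routine bookkeeping: checking the gradings in the identification above, and solving a triangular linear system below.)

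Finally I would pin down the rank. As $Ke_m$ is projective, $Ke_m\cong\oplus_j(Ae_j)^{\oplus c_{jm}}$ with suitable internal-degree shifts, where $c_{jm}=\dim_k\uTor^A_1(S_j,Be_m)$ and $c_{jm}=0$ for $j\geq m-2$ since $K_{jm}=0$ there. Comparing $k$-dimensions at each vertex $i$ and substituting $\dim_k K_{im}=\dim_k A_{i,m-3}$ gives the system $\sum_j c_{jm}\dim_k A_{ij}=\dim_k A_{i,m-3}$ for $i\leq m$, which is triangular with $\dim_k A_{ii}=1$; reading it off from $i=m-3$ downwards forces $c_{m-3,m}=1$ and $c_{jm}=0$ otherwise. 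Hence $Ke_m$ is free of rank one on its one-dimensional degree-$3$ component $K_{m-3,m}=k\,g_{m-3}$, i.e. $Ke_m=Ag_{m-3}$. The right-hand statement $e_iK=g_iA$ follows by running the same argument for the dual elliptic helix $(\cL_{-i}^{*})_{i\in\bZ}$ of period $3$, whose endomorphism algebra and quadratic part are $\End(\underline{\cL})^{\mathrm{op}}$ and $A^{\mathrm{op}}$ up to the reindexing $i\mapsto -i$ (cf.\ the dualisation used in the proof of Theorem~\ref{thm:egTriadGenHelix}). By the reduction in the first paragraph, this completes the proof.
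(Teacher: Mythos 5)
Your proposal is correct, and it takes a genuinely different route from the paper's proof. The paper follows the Artin--Tate--Van den Bergh strategy: it compares the relation ideal $J$ of $B$ inside the tensor algebra with the kernels of multiplication $K_{abc},K_{abcd}$ (Lemma~\ref{lem:atvJandK}), identifies $K_{01n}\simeq(\cL_{-n},L_{\cL_{-1}}\cL_0)$ (Claim~\ref{claim:kernelMult}), and proves left generation of $\ker\pi$ in degree three by induction on degree, the key input being the vanishing ${}^1(\cL_{-n},\cL_{-3})=0$ for $n>3$ (Lemma~\ref{lem:keyToNormal}); the one-dimensionality of $(\ker\pi)_{i,i+3}$, the regularity statement and the dualisation of the helix for the right-handed half are handled exactly as in your reduction. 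You instead argue module-theoretically: tensoring the Koszul resolution of $S_j$ against $Be_m$ and identifying the result with $(\cL_{-m},-)$ applied to the helix sequence (\ref{eq:ellipticHelixStructure}) -- the same identification the paper uses in Proposition~\ref{prop.fg} -- you get $\uTor^A_i(S_j,Be_m)=0$ for $i\geq 2$ purely from left-exactness of Hom together with the factorisation of $\psi_2$ through the mutation $R_{\cL_{-j-2}}\cL_{-j-3}$, so that $(\ker\pi)e_m$ has vanishing first $\uTor$ against every simple and is therefore a direct sum of modules $Ae_j$, which the Hilbert series of Proposition~\ref{prop:HilbertSeries} then pins down to a single copy of $Ae_{m-3}$ on the degree-three generator. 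What your route buys is that no Ext-vanishing or slope hypotheses enter at the decisive step, and you obtain the sharper statement that $(\ker\pi)e_m\cong Ae_{m-3}$ is free of rank one; what it costs is reliance on standard but unstated indexed-graded machinery (the long exact sequence for $\uTor$ in the second variable, minimal covers and graded Nakayama for locally finite left modules bounded above in the vertex index), all of which does carry over to the $\bZ$-indexed setting but is not developed in the paper. Both arguments use Proposition~\ref{prop:HilbertSeries} for the choice of $g_i$ and for regularity (you use it additionally for the rank computation), and both deduce the right-handed generation statement by passing to the dual elliptic helix.
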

\begin{proof}
We follow the proof of the analogous graded result in \cite[Section~7]{atv1}, the key difference being that we will replace the cohomological study of line bundles from \cite{atv1} by the cohomology of elliptic helices of period 3. From our Hilbert series calculation Proposition~\ref{prop:HilbertSeries}, we know that the kernel of the surjection $A_{i,i+3} \to B_{i,i+3}$ is one dimensional so we can pick a $k$-basis $g_i \in A_{i,i+3}$. We wish to show that $\underline{g}:=(g_i)$ is a normal family of regular elements generating $\ker \pi$, for which it suffices to show that $\ker \pi$ is generated on the left and on the right, in degree three.

Let $\underline{V} = (B_{i,i+1})_{i \in \bZ}$ be the space of degree one generators for $A$ and $B$ and $\phi \colon T(\underline{V}) \to B$ be the natural surjection from the tensor algebra $T(\underline{V})$. Let $J = \ker \phi$. We define the following kernels of multiplication maps
$$
K_{abc} := \ker (B_{ab} \otimes B_{bc} \to B_{ac}), \quad
K_{abcd} := \ker (B_{ab} \otimes B_{bc} \otimes B_{cd} \to B_{ad}) .
$$
The proof of the following result is elementary and identical to that found in \cite[Lemma~7.27]{atv1}
\begin{lemma}  \label{lem:atvJandK}
The natural surjection $T_{ad} \to B_{ab} \otimes B_{bc} \otimes B_{cd}$ induces an isomorphism
$$
\frac{J_{ad}}{T_{ab}\otimes J_{bd} + J_{ac} \otimes T_{cd}}   \simeq
\frac{K_{abcd}}{B_{ab}\otimes K_{bcd} + K_{abc} \otimes B_{cd}}
$$
\end{lemma}
The key technical lemma is the following.
\begin{lemma}  \label{lem:keyToNormal}
For $n>3$ we have
\begin{enumerate}
    \item $\dfrac{K_{012n}}{B_{01}\otimes K_{12n} + K_{012} \otimes B_{2n}} = 0$ and,
    \item $J_{i,i+n} = T_{i,i+1} J_{i+1,i+n} + J_{i,i+2} T_{i+2,i+n}$ for all $i \in \bZ$.
\end{enumerate}
\end{lemma}
\begin{proof}
Note that Lemma~\ref{lem:atvJandK} shows that part~(1) implies part~(2) in the special case where $i=0$. However, changing the indices in $\underline{\cL}$ by an additive constant we see that part~(2) holds for all $i$.

It thus suffices to prove part~(1), which we do presently. Consider the following commutative diagram with exact rows and columns
$$
\begin{CD}
@. B_{01} \otimes K_{12n} @= B_{01} \otimes K_{12n}  @. @. \\
@. @VVV @VVV @. @. \\
0 @>>> K_{012n} @>>> B_{01}\otimes B_{12}\otimes B_{2n} @>>> B_{0n} @>>> 0 \\
@. @VVV @VVV @VVV @. \\
0 @>>> \frac{K_{012n}}{B_{01} \otimes K_{12n}} @>>> B_{01} \otimes B_{1n} @>>> B_{0n} @>>> 0
\end{CD}
$$
which naturally produces an isomorphism
$$\frac{K_{012n}}{B_{01} \otimes K_{12n}} \simeq K_{01n}.
$$
We need the following
\begin{claim} \label{claim:kernelMult}
$K_{01n} \simeq (\cL_{-n}, L_{\cL_{-1}}\cL_0)$.
\end{claim}
\begin{proof}
This follows on applying the functor $(\cL_{-n}, ?)$ to the exact sequence
$$
0 \to L_{\cL_{-1}} \cL_0 \to (\cL_{-1},\cL_0) \otimes \cL_{-1} \to \cL_0 \to 0 .
$$
\end{proof}
We return to the proof of Lemma~\ref{lem:keyToNormal} for which it remains only to show that the cokernel of the map $K_{012} \otimes (\cL_{-n}, \cL_{-2}) \to K_{01n}$ is zero. In view of Claim~\ref{claim:kernelMult}, this amounts to showing the composition of morphisms map
$$
\mu \colon (\cL_{-2}, L_{\cL_{-1}} \cL_0) \otimes (\cL_{-n}, \cL_{-2}) \to (\cL_{-n}, L_{\cL_{-1}} \cL_0)
$$
is surjective. Applying the functor $(\cL_{-n}, ?)$ to the exact sequence
$$
0 \to \cL_{-3} \to (\cL_{-2}, L_{\cL_{-1}}\, \cL_0) \otimes \cL_{-2} \to L_{\cL_{-1}} \cL_0 \to 0
$$
gives the exact sequence
$$
(\cL_{-2}, L_{\cL_{-1}}\, \cL_0) \otimes (\cL_{-n}, \cL_{-2}) \xrightarrow{\mu}  (\cL_{-n}, L_{\cL_{-1}}\, \cL_0) \to \,^1(\cL_{-n}, \cL_{-3}).
$$
This completes the proof of the lemma since by definition of an elliptic helix of period 3, we have $^1(\cL_{-n}, \cL_{-3}) = 0$ as soon as $n>3$.
\end{proof}
We now return to completing the proof of Theorem~\ref{thm:kernelGenByNormal}. Recall $\underline{g} = (g_i)$ was defined to be the family of elements $g_i \in A_{i,i+3}$ which span the kernel of $\pi_{i,i+3} \colon A_{i,i+3} \to B_{i,i+3}$. We first show that $\ker \pi  = A\underline{g} := \oplus A e_i g_i$. Indeed, this follows by induction on degree using Lemma~\ref{lem:keyToNormal}(2) and the fact that $J_{i,i+2}$ is zero in $A$. To prove the right-handed result $\ker \pi = \underline{g}A$, we need only apply the left-handed result to the dual elliptic helix of period three, $ \ldots, \cL_2^*, \cL_1^*, \cL_0^*, \ldots$. Finally, regularity of the $g_i$ follows from our Hilbert series results Proposition~\ref{prop:HilbertSeries}.
\end{proof}

In the case of an elliptic helix generated by a $(0,\tfrac{d}{2},d)$-triad, we can say a little more about its endomorphism ring, or rather, its {\sf Proj}.
\begin{theorem}  \label{thm:ncElliptic}
Let $\underline{\mathcal{L}}$ be an elliptic helix of period 3 generated by a $(0,\tfrac{d}{2},d)$-triad where $d>3$ is odd. Let $B = \End(\cL)$ and $A = \bS^{nc}(\underline{\cL})$ be its quadratic part. Then
\begin{enumerate}


\item{} $A$ and $B$ are nonnoetherian and $B$ is coherent and,


\item{} ${\sf Proj }B$ is a noncommutative elliptic curve.

\end{enumerate}
\end{theorem}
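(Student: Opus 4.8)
The plan is to deduce both parts from the structure of $A$ together with Theorem~\ref{thm:kernelGenByNormal}, which presents $B$ as the quotient of the Koszul AS-regular algebra $A=\bS^{nc}(\underline{\cL})$ by the two-sided ideal generated by a normal family $\underline{g}=(g_i)$ of regular elements of degree three. For the non-noetherianness in part~(1), I would argue from the Hilbert series of Proposition~\ref{prop:HilbertSeries}: their common denominator factors as $1-dt+dt^2-t^3=(1-t)(1-(d-1)t+t^2)$, and the quadratic factor has the real root $t_0=\tfrac{1}{2}\bigl((d-1)-\sqrt{(d-3)(d+1)}\bigr)$, which satisfies $0<t_0<1$ by Lemma~\ref{lemma.abound} (recall $d\geq 5$). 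Hence $H_A$ and $H_B$ have a pole strictly inside the unit disc, so $\dim_k A_{n,n+i}$ and $\dim_k B_{n,n+i}$ grow exponentially in $i$; in particular $A$ and $B$ have infinite GK-dimension. Since a quotient of a left (resp.\ right) noetherian ring is left (resp.\ right) noetherian, it suffices to show $B$ is neither left nor right noetherian, and I would prove this exactly as one does for coordinate algebras of noncommutative tori: the irrationality of $\lim_{n\to\pm\infty}\mu(\cL_n)$, established in Theorem~\ref{thm:egTriadGenHelix}, is the obstruction, and it is used to produce an infinite strictly ascending chain of graded right (resp.\ left) ideals of $B$.

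For coherence of $B$, the crucial step is coherence of $A$. Since $A$ is Koszul (Theorem~\ref{thm:BPBisKoszul}(1)) with Koszul dual $A^!$ finite-dimensional --- indeed Frobenius of index three, by Theorem~\ref{thm:BPBisKoszul}(3) --- the algebra $A$ is a noncommutative $\bP^2$ and is graded coherent; I would invoke the appropriate form of this fact (the Beilinson-type description of ${\sf Proj}\,A$ in terms of a finite-dimensional algebra, or the general statement that a Koszul algebra with finite-dimensional Koszul dual is graded coherent). Coherence then descends along $A\to B$ because, by Theorem~\ref{thm:kernelGenByNormal}, the kernel is generated by the normal \emph{regular} family $\underline{g}$ of degree three: $B$ has, on each side, a free resolution $0\to A'\to A\to B\to 0$ of length one with $A'$ a degree-three shift of $A$ (given by multiplication by $\underline{g}$), and the standard change-of-rings computation of the $B$-module Tor of a finitely presented $B$-module through this resolution shows that its first syzygy over $B$ is again finitely presented. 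Alternatively, coherence of $B$ can be extracted directly from the length-three Koszul resolutions of Proposition~\ref{prop:KoszulResultionEquigen}, by transporting the syzygies of a finitely presented $B$-module through the helix exact sequences into the noetherian category $\Coh(X)$, where they are automatically finitely generated.

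For part~(2), I would check the conditions characterising a noncommutative elliptic curve in the sense of \cite{polish1}: $B$ is connected, locally finite, graded torsion-free (Theorem~\ref{thm:BPBisKoszul}(4)) and coherent (just proved); ${\sf Proj}\,B$ has cohomological dimension one; and $B$ is AS-Gorenstein of dimension two with Gorenstein parameter zero, so that ${\sf Proj}\,B$ satisfies Serre duality with Serre functor the shift $[1]$ (equivalently, trivial dualizing object). The last two points reduce to the standard fact that cutting an AS-regular algebra of global dimension three and Gorenstein parameter three --- which $A$ is, by Theorem~\ref{thm:BPBisKoszul}(1) --- by a normal regular element of degree three yields an AS-Gorenstein algebra of dimension two with Gorenstein parameter $3-3=0$; applying this to $\underline{g}$ and combining it with the ${\sf Proj}$-formalism of Section~\ref{sec.geom} and Polishchuk's form of Serre duality for coherent algebras gives the assertion. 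For $d>3$, the non-noetherianness from part~(1) guarantees that this noncommutative elliptic curve is genuinely noncommutative, i.e.\ not $\Coh$ of an ordinary elliptic curve.

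The main obstacle will be coherence of $B$. Coherence is not inherited by arbitrary quotients --- not even by quotients by a single regular element --- so the first argument genuinely requires the normality of $\underline{g}$ together with a careful Tor computation over the $\bZ$-indexed algebra $A$, while the alternative argument requires tight control of how the syzygies of $B$-modules are carried by the helix into $\Coh(X)$. By comparison, the non-noetherianness is a comparatively soft consequence of the exponential Hilbert-series growth and the irrational limiting slopes, and the elliptic-curve properties are essentially read off from the homological behaviour of $A$ under cutting by a degree-three normal regular element.
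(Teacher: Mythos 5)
Your route diverges substantially from the paper's, and its two load-bearing steps are unsupported. First, coherence: you reduce everything to graded coherence of $A=\bS^{nc}(\underline{\cL})$ itself, to be ``invoked'' from a general statement that a Koszul algebra with finite-dimensional (Frobenius) Koszul dual is graded coherent. No such theorem exists, and for $d>3$ the algebra $A$ is of exponential growth and non-noetherian, so none of the standard noncommutative-$\bP^2$ machinery applies; coherence of $A$ is not established anywhere in this paper or in the sources it cites. (Your descent step $A\to B$, using that $\ker\pi$ is free of rank one on each side because $\underline{g}$ is normal and regular, is the repairable half --- but it rests entirely on the unproved coherence of $A$.) The paper never goes through $A$ at all: it shows, via Theorem~\ref{thm:egTriadGenHelix} and the argument of \cite[Theorem 3.5]{polish1}, that $\underline{\cL}$ is an ample sequence of objects of Polishchuk's category ${\sf C}_{\theta}$, where $\theta=\lim_{n\to-\infty}\mu(\cL_n)$ is the irrational limiting slope, and then coherence of $B=\End(\underline{\cL})$ and the equivalence ${\sf Proj}\,B\simeq {\sf C}_{\theta}$ follow at once from \cite[Proposition 2.3]{polishchuk}.

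Second, part~(2): your proposed checklist (connected, locally finite, torsion-free, coherent, cohomological dimension one, AS-Gorenstein of dimension two with Gorenstein parameter zero, Calabi--Yau-$1$ Serre duality) is not a characterisation of ``noncommutative elliptic curve in the sense of \cite{polish1}''. All of these properties hold in the Artin--Tate--Van den Bergh/Bondal--Polishchuk case $d=3$, where ${\sf Proj}\,B$ is the category of coherent sheaves on an honest elliptic curve, and you cite no classification theorem converting such homological data into an equivalence with some ${\sf C}_{\theta}$; the only identification available is the direct one via ampleness described above. Finally, your non-noetherianness argument is only gestured at: exponential growth of the Hilbert series does not by itself contradict noetherianity unless you invoke (and extend to $\bZ$-indexed algebras) a Stephenson--Zhang-type subexponential-growth theorem, and the promised ``noncommutative torus'' ascending chain is never constructed. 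The paper instead gets this for free from the identification with ${\sf C}_{\theta}$: by \cite[Proposition 3.1]{polish1} every nonzero object of ${\sf C}_{\theta}$ is non-noetherian because $\theta$ is irrational, so each $e_iB$ is non-noetherian, hence $B$ is, and then $A$ is since $B$ is a quotient of $A$ (this last reduction you do have). As written, the proposal establishes neither the coherence of $B$ nor the identification of ${\sf Proj}\,B$ with a noncommutative elliptic curve.
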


\begin{proof}




In light of Theorem~\ref{thm:egTriadGenHelix}, the proof of \cite[Theorem 3.5]{polish1} shows that the sequence $\underline{\mathcal{L}}$ is ample for the noncommutative elliptic curve ${\sf C}_{\theta}$ defined in \cite{polish1}, where $\theta = d \biggl{(}\frac{A-(d-1)}{A-(d-3)}\biggr{)}$.  This implies that $B$ is coherent \cite[Proposition 2.3]{polishchuk}, and establishes (2).  The irrationality of $\theta$, proven in Theorem \ref{thm:egTriadGenHelix}, implies, by \cite[Proposition 3.1]{polish1} that every nonzero object of ${\sf C}_{\theta}$ is nonnoetherian, so that $B$ is nonnoetherian, i.e. $e_{i}B$ is nonnoetherian for all $i \in \mathbb{Z}$. Thus, $A$ is nonnoetherian, whence (1).

\end{proof}


\bibliographystyle{amsalpha}

\bibliography{main}

\end{document}